\newcommand{\defeq}{\vcentcolon=}
\renewcommand{\@biblabel}[1]{\quad#1.}
\definecolor{Gray}{gray}{.25}
\providecommand{\keywords}[1]
{
  \small	
  \textbf{\textit{Keywords---}} #1
}
\theoremstyle{plain}
\newtheorem{theorem}{Theorem}[section] 
\theoremstyle{definition}
\newtheorem{definition}[theorem]{Definition} 
\newtheorem{example}[theorem]{Example} 
\newtheorem{proposition}[theorem]{Proposition} 
\newtheorem{lemma}[theorem]{Lemma}
\newtheorem{corollary}[theorem]{Corollary}
\newtheorem*{claim*}{Claim}
\newtheorem{remark}{Remark}
\newtheorem{remarknot}{Remark and Notation}
\newtheorem*{notation*}{Notation}
\newtheorem{fact}[theorem]{Fact}
\begin{document}


\title{von Neumann regular Hyperrings and applications to Real Reduced Multirings \footnote{The first author was supported by Coordenação de Aperfeiçoamento de Pessoal de Nível Superior (CAPES) and Conselho Nacional de Desenvolvimento Científico e Tecnológico (CNPQ).}}

  \author{ 
    {\large Hugo Rafael de Oliveira Ribeiro}\thanks{hugorafael@usp.br (Corresponding Author)} \\ {\small University of São Paulo, Institute of Mathematics and Statistic (IME), São Paulo, Brasil} \\ 
    {\large  Hugo Luiz Mariano}\thanks{hugomar@ime.usp.br}\\ 
    {\small University of São Paulo, Institute of Mathematics and Statistic (IME), São Paulo, Brasil} 
  }
\date{\today}

  \maketitle

\begin{abstract}
A multiring (\cite{Mar3}) is a kind of ring where is allowed the sum of two elements to be a non-empty subset of the structure instead of just one element -and an hyperring is a multiring with a strong distributive property. Thus a reduced hyperring where the prime spec is a Boolean topological space is called von Neumann regular hyperring (vNH). It is possible to associate to every such object a structural presheaf in the same way it is made with rings but there are some vNH such that this presheaf is not a sheaf. In this sense, we give a first-order characterization of vNH with a structural sheaf (geometric vNH or just GvNH) and how to transform a vNH in a GvNH -in fact, this transformation shows that the category \textbf{GvNH} is a reflexive subcategory of \textbf{vNH}. We also build a von Neumann regular hull for multirings and use this to give applications for algebraic theory of quadratic forms. More precisely, we work with Real Reduced Multiring (RRM, \cite{Mar3}) -also known as Real Semigroup (RS, \cite{DP1})-, a special kind of multirings that is useful to explore the real structure of rings, and show that a von Neumann hull of a RRM is again a RRM. This gives a generalization of sheafs arguments present in \cite{DM4}.
\end{abstract}

\keywords{multirings, von Neumann hyperrings, hull, quadratic forms, real semigroup}

MSC: 1301, 14P10 \\

Data sharing not applicable to this article as no datasets were generated or analysed during the current study.

\section*{Introduction}


In 2006, Marshall presented in \cite{Mar3} the notion of multiring in the setting of quadratic form theory, a structure like a ring with multivaluated sum. This concept enables a unified way to deal with rings, Special Group, Real Semigroup and several other structures from Algebraic Theory of Quadratic Forms (\cite{RRM}). Besides that, in \cite{Mar3} is given a simple characterization of RS named Real Reduced Multiring (RRM) -more precisely, in \cite{RRM} is described an equivalence between the categories $\textit{RS}$, $\textit{RRM}$ and them are dual to $\textbf{ARS}$ (Abstract Real Spectra).

Also in \cite{Mar3} is proved that for each multiring $A$ can be associated a "canonical" RRM $Q(A)$ -in fact, we proved that this RRM is the best in the sense that the natural projection $\pi_A \colon A \to Q(A)$ is initial (Theorem $\ref{uniProMRR}$). It is an open question to determine which class of RRM we can represent with rings by the functor $Q$.

In \cite{Mar2} (section 8.8), Marshall shows, in the dual language of $\textbf{ARS}$, an example of RRM that is not of the form $Q(A)$ for any ring $A$. An open problem in this sense is given a Real Reduced Hyperfield $M$ (equivalently Reduced Special Group or Abstract Order Space) if exist a ring $A$ such that $M \cong Q(A)$ (or if exist a field $K$ with $M \cong Q(K)$). In \cite{DM3} exist a partial result: every RSG is realized as quotient of a multiplicative subset in the ring of continuous real-valued functions over some Boolean space. The  question remains open also for a Real Reduced Hyperring $M$ but as corollary of Theorem $\ref{QVVQ}$ we show that if exist a semi-real ring $A$ with $Q(A) \cong M$, then exists a semi-real von Neumann regular ring $A'$ such that $M \cong Q(A')$.\\

\textbf{Overview of the paper:}\\

We will start section 1 with some definitions and constructions with concerning multirings, emphasizing the universal property and adaptations concerning the prime spectrum. In section 2 we describe the structural presheaf associated to every multiring. In section 3 we present the notion of von Neumann regular hyperring (vNH) and characterize when it is geometric (that is, the structural pre-sheaf is sheaf) (Theorem $\ref{geoVon}$). A similar work was done in \cite{Jun} where is proved that hyperdomains are geometric and in \cite{DP2} is proved that Real Reduced Multiring (in Real Semigroup language) are also geometric. Furtheremore, we describe a geometric hull for vNH (Corollary $\ref{geoHullvN}$) and give a new representation for the Real Reduced Hull of a GvNH (Theorem $\ref{reprevN}$) using Marshall quotient. We finalize the results with section 4 introducing the von Neumann regular hull of a multiring (Theorem $\ref{uniPGvNH}$). In section 5 we discuss the future works.

\section{Preliminaries}

In this section we give a complete list of construction and related results probably known and very important to what follows. The proofs are given for the reader convenience.

\begin{definition} \label{multiDef}
 A multiring is a tuple $(A, +, -, \cdot, 0, 1)$ where $(A, \cdot, 1)$ is a commutative semigroup, $+ \colon A^2 \to \wp(A)^* \defeq \wp(A)\setminus\{\emptyset\}$ and $- \colon A \to A$
 are functions and $0, 1 \in A$ are constants such that for all $a,b,c \in R$:
 
 \begin{enumerate}[i)]
  \item $a \in b + c \Rightarrow c \in -b + a$ and $b \in a + (-c)$.
  \item $a \in b + 0 \Leftrightarrow a = b$.
  \item (Associativity) If $x \in g + c$ with $g \in a + b$, then exist $h \in b + c$ such that $x \in a + h$.
  \item $a + b = b + a$.
  \item $a \cdot 0 = 0$.
  \item (Half distributivity) If $a \in b + c$, then for all $d \in A$, $ad \in bd + cd$.
 \end{enumerate}
 
\end{definition}

In some multirings, as the usual rings, the property vi) admits a reciprocal

\begin{equation*}
 x \in bd + cd \Rightarrow \mbox{ exist } a \in b + c \mbox{ such that } x = ad.
\end{equation*}

The multirings that satisfies this are called hyperrings.

Some notions for rings have useful generalization to our context. A multiring $A$ is called multidomain if for all $a,b \in A$ with $ab=0$ 
we have $a = 0$ or $b = 0$. 

We define the set of invertible elements $A^{\times} = \{a \in A \colon \mbox{exists } b \in A \mbox{ such that } ab = 1\}$ and the set of weak-invertible elements $A^{\times_w} = \{a \in A \colon \mbox{exists } b_1, \ldots, b_n \in A \mbox{ such that } 1 \in ab_1 + \cdots + ab_n\}$. Note that $A^{\times} \subseteq A^{\times_w}$ and if $A$ is an hyperring, then $A^{\times} = A^{\times_w}$. A multiring $A$ is called multifield if $1 \neq 0$ and all non-zero element is weak-invertible ($A
^{\times_w} = A \setminus \{0\}$) and $A$ is called hyperfield if $1 \neq 0$ and all non-zero element is invertible ($A^{\times} = A \setminus \{0\}$).

\begin{remark}
\begin{itemize}
 \item Note that if $A$ is a hyperfield, then $A$ is an hyperring because given $x \in bd + cd$, if $d = 0$, take $a = 0$; if $d \neq 0$, take $a =x d^{-1}$; in both cases, $x = ad$ and $a \in b + c$. Because of this, we will use the term hyperfield instead of the term multifield used in \cite{Mar3}. 
 
 \item It is possible to define multiring as first-order structure by interpretate the multivalued sum as ternary relation: given a multiring $(A, +, -, \cdot, 0, 1)$, interpret $+$ as a relation $\pi \subseteq A^3$ such that $c \in a + b$ iff $(a,b,c) \in \pi$. Thus every axiom is a Horn-geometric sentence.
 
 \item In the definition of multiring, the multivaluated sum is a priori non-empty but this follows by the axioms. In fact, given $a, b \in A$, since $a \in 0 + a$ and $0 \in b + (-b)$, associativity implies the existence of $h \in a + b$ with $a \in -b + h$.
 \end{itemize}
\end{remark}

\begin{lemma}
 Let $A$ be a multiring. Given $a,b,c \in A$
 
 \begin{enumerate}[i)]
  \item $-0 = 0$ and $-(-a) = a$.
  \item $a \in b + c$ if, and only if, $-a \in -b + -c$.
 \end{enumerate}

\end{lemma}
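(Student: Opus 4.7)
The plan is to derive both statements directly from the axioms in Definition \ref{multiDef}, principally by iterating the ``sign-flip'' rule in axiom (i), which says that membership $a \in b + c$ can be rewritten as $c \in -b + a$ or $b \in a + (-c)$. The axiom of the additive identity (ii) will act as a trigger, turning an equality into a membership relation of the form $x \in x + 0$, which we can then push through (i) to extract signs.

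For part (i), first I would observe that $0 \in 0 + 0$ (by axiom (ii) applied to $0 = 0$) and then apply (i) to this membership to get $0 \in -0 + 0$; a second appeal to (ii) collapses this to $0 = -0$. To show $-(-a) = a$ for an arbitrary $a \in A$, I would start from $a \in a + 0$ (axiom (ii)) and apply (i) to obtain $0 \in -a + a$. Applying (i) once more to this latter membership yields $a \in -(-a) + 0$, which by (ii) forces $a = -(-a)$.

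For part (ii), suppose $a \in b + c$. The idea is to chain three uses of axiom (i) so that each of the three entries acquires a sign. Concretely, from $a \in b + c$ I would first get $b \in a + (-c)$ via the second clause of (i). Applying (i) to this gives $-c \in -a + b$, and a third application of (i) to this yields $-a \in -(-a) \cdot \text{not needed}$; more precisely, reading off the first clause of (i) we obtain $-a \in -c + (-b)$, which by commutativity (axiom (iv)) is $-a \in -b + (-c)$. The converse is then immediate: applying the forward direction to $-a \in -b + (-c)$ gives $-(-a) \in -(-b) + (-(-c))$, and part (i) collapses each double negation back to itself.

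Neither step presents a real obstacle; the only subtlety is to be careful about the order in which one applies the two clauses of axiom (i) so that the signs land on the correct entries. Associativity (iii) and the full distributive/inverse axioms are not needed for this lemma; everything follows from axioms (i), (ii), and (iv) alone.
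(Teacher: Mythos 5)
Your proof is correct and follows essentially the same route as the paper: iterate the sign-flip rule of axiom (i), with axiom (ii) generating and collapsing the zero-membership at the endpoints, and commutativity to swap the last two slots. One small bookkeeping slip: in the third step of part (ii), going from $-c \in -a + b$ to $-a \in -c + (-b)$ uses the \emph{second} clause of axiom (i) (namely $y \in x + (-z)$ from $x \in y+z$), not the first as you wrote; the first clause applied there just returns you to $b \in a + (-c)$ (after using $-(-a)=a$). The garbled intermediate phrase ``$-a \in -(-a)\cdot\text{not needed}$'' should simply be deleted; the corrected chain and the converse via part (i) are fine.
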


\begin{proof}
 \begin{enumerate}[i)]
  \item $0 \in 0 + 0 \Rightarrow 0 \in -0 + 0 \Rightarrow -0 = 0$ and $x \in x + 0 \Rightarrow 0 \in -x + x \Rightarrow x \in -(-x) + 0 \Rightarrow -(-x) = x$.
  
  \item $a \in b + c \Leftrightarrow c \in -b + a \Leftrightarrow -b \in c + -a \Leftrightarrow -a \in -b + -c$.
 \end{enumerate}

\end{proof}

\begin{example}
\begin{itemize}
    \item Given a ring $(A, +, -, \cdot, 0, 1)$, define $+' \colon A \times A \to \mathcal{P}(A)^*$ as $a +' b = \{a + b\}$. Then $(A, +', -, \cdot, 0, 1)$ is a multiring.
    
    \item The Krasner hyperfield $\mathbb{K} = \{0,1\}$ is given by usual multiplication and multivalued sum by $0 + 0 = \{0\}, 0 + 1 = \{1\}, 1 + 1 = \{0,1\}$.
    
    \item The signal hyperfield $\textbf{3} = \{1,0,-1\}$ is given by usual multiplication and multivalued sum by $0 + x = \{x\} \ \forall x \in \textbf{3}$, $1 + 1 = \{1\}, -1 + (-1) = \{-1\}, 1 + (-1) = \{1, 0, -1\}$.
\end{itemize}
\end{example}

\begin{definition}
 Let $f \colon A \to B$ be a map between multirings. The function $f$ is a morphism of multirings if for all $a,b,c \in A$:
 
 \begin{enumerate}[i)]
  \item If $a \in b +_A c$, then $f(a) \in f(b) +_B f(c)$.
  \item $f(a \cdot_A b) = f(a) \cdot_B f(b)$.
  \item $f(0_A) = 0_B, f(1_A) = 1_B$.
  \item $f (-_A a) = -_B f(a)$.
 \end{enumerate}

\end{definition}

Given $A$ a multiring and $S, T \subseteq A$, it is defined $S + T = \bigcup_{s \in S, t \in T} s + t$ and $S \cdot T = \{st \colon s \in S, t \in T\}$. Given $x_1, \ldots, x_n \in A$, we define by induction $x_1 + \cdots + x_n = \{x_1\} + (x_2 + \cdots + x_n)$.
A set $\alpha \subseteq A$ is an ideal if $\alpha + \alpha \subseteq \alpha$ and $A \alpha \subseteq \alpha$ and $\alpha$ is a proper ideal
if $1 \notin \alpha$. An ideal $\alpha$ is prime if it is a proper ideal and given $a,b \in A$ such that $ab \in \alpha$ then $a \in \alpha$ or $b \in \alpha$ and $\alpha$
is maximal if it is maximal between all proper ideals. An equivalent way to define prime ideals is through morphism to the Krasner hyperfield: if $f \colon A \to \mathbb{K}$ is a morphism, then $f^{-1} (0)$ is a prime ideal and given $p \subseteq A$ prime ideal, the characteristic map $\chi_p \colon A \to \mathbb{K}$ given by $\chi_p(x) = 0$ iff $x \in p$ is a multiring morphism. The set of all prime ideals are denoted by $\mbox{spec}(A)$. 
A set $S \subseteq A$ is multiplicative if $1 \in S$ and $S \cdot S \subseteq S$.
Given a subset $X \subseteq A$, the ideal generated by $X$ is $\bigcup \{t_1x_1 + \cdots + t_n x_n \colon  n \geq 1, t_i \in A, x_i \in X \mbox{ for all } i=1, \ldots, n\}$. Note
that if $X = \{x_1, \ldots, x_n\}$ is finite, in general $I = \bigcup \{t_1x_1 + \cdots + t_nx_n \colon t_i \in A \mbox{ for all } i=1, \ldots, n\}$ is not the ideal generated
by $X$ (it is possible not closed by addition) but if $A$ is an hyperring this is true.

\begin{theorem} \label{PIT}
 Let $A$ be a multiring.
 
 \begin{enumerate}[i)]
  \item Every maximal ideal is a prime ideal.
  \item (Prime Ideal Theorem) Let $I \subseteq A$ an ideal and $S \subseteq A$ a multiplicative set such that $I \cap S = \emptyset$. Then exists a prime ideal $p$ such that $I \subseteq p$ and $p \cap S = \emptyset$. In particular, $A \neq 0$ if, and only if, $\mbox{spec}(A) \neq \emptyset$.
  \item Given $a \in A$, define $D(a) = \{P \in \mbox{spec}(A) \colon a \notin P\}$. Then $D(a) \cap D(b) = D(ab)$ and the family $\{D(a) \colon a \in A\}$
  is a basis for a spectral topology in $\mbox{spec}(A)$.
 \end{enumerate}

\end{theorem}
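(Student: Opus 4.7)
For ii), my plan is a Zorn's lemma argument on the poset $\mathcal{P}$ of ideals $J \supseteq I$ with $J \cap S = \emptyset$, ordered by inclusion. It is nonempty, and chains have suprema, since the finite-character description of ideal generation ensures unions of chains of ideals are ideals, and they remain disjoint from $S$. Let $P$ be a Zorn-maximal element; I claim $P$ is prime. If $a,b \notin P$, maximality forces the ideal $\langle P \cup \{a\}\rangle$ to meet $S$, so some $s_1 \in S$ lies in a finite multivalued sum of terms of the form $t_i p_i$ (with $p_i \in P$) and $u_j a$; symmetrically some $s_2 \in S$ meets $\langle P \cup \{b\}\rangle$. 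Iterating half-distributivity and associativity on the product $s_1 s_2$ expands it into a set-sum whose atomic terms each lie in $P$: those carrying some $p_i$ or $p'_k$ via $AP \subseteq P$, and terms of shape $u_j u'_k ab$ because $ab \in P$. Hence $s_1 s_2 \in P$, contradicting $P \cap S = \emptyset$.

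Part i) follows the same recipe: if $M$ is maximal and $ab \in M$ with $a,b \notin M$, then $\langle M \cup \{a\}\rangle = \langle M \cup \{b\}\rangle = A$ by maximality, so $1$ admits expansions in both ideals, and the product computation above places $1 \in M$, contradicting properness. The final clause of ii) is immediate by applying the main statement with $I = \{0\}$ and $S = \{1\}$: disjointness is $1 \neq 0$, i.e.\ $A \neq 0$, and the resulting prime witnesses $\mbox{spec}(A) \neq \emptyset$; conversely any prime is proper, so $\mbox{spec}(A) \neq \emptyset$ forces $A \neq 0$.

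For iii), the identity $D(a) \cap D(b) = D(ab)$ unfolds directly: $a,b \notin P$ iff $ab \notin P$, using primality for one direction and $AP \subseteq P$ for the other. Since $1$ lies in no proper ideal, $D(1) = \mbox{spec}(A)$, so the family $\{D(a)\}_{a \in A}$ both covers $\mbox{spec}(A)$ and is closed under finite intersection, hence is a topology basis. Spectrality of the resulting topology follows the classical pattern for the prime spectrum of a commutative ring, with ii) playing the role of the usual PIT: quasi-compactness of each $D(a)$ reduces to a finite-character statement about the ideal generated by the indices of an open cover; $T_0$-ness and sobriety come from identifying the closure of $\{P\}$ with $\{Q \in \mbox{spec}(A) : P \subseteq Q\}$ and using ii) to produce generic points for irreducible closed sets.

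The principal technical obstacle throughout is that only half-distributivity is assumed, so one cannot rewrite $(p + ta)(p' + t'b)$ as an equality of sets. The product argument must instead work at the level of single elements: given $s_1 \in p + ta$ and $s_2 \in p' + t'b$, iterated application of half-distributivity together with associativity shows that the single element $s_1 s_2$ sits inside a four-fold set-sum whose every summand manifestly lies in $P$. Analogous bookkeeping is needed to establish the informal normal form $\langle P, a\rangle \subseteq P + \langle a \rangle$ and to check that chain-unions of ideals are again closed under the multivalued $+$; both are routine but require tracking witnesses at the element level rather than at the level of sets.
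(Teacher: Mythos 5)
Your argument is correct and follows essentially the same route as the paper: Zorn's lemma to produce an ideal $p$ maximal among those containing $I$ and disjoint from $S$, then the product expansion of $s_1 s_2$ (using half-distributivity and associativity, and the fact that repeated instances of $a$ or $b$ are allowed in the generators of the enlarged ideal) to show that non-primality of $p$ would place an element of $S$ inside $p$. The only small divergence is in part i): the paper's proof is a one-sided computation — from $1 \in m + t_1a + \cdots + t_n a$ it simply multiplies through by $b$ and concludes $b \in bm + t_1 ab + \cdots + t_n ab \subseteq \mathfrak{m}$, which is a bit leaner than your symmetric $s_1 s_2$ expansion from expansions of $1$ in both $\langle M, a\rangle$ and $\langle M, b\rangle$ (either works; you could also note that i) is an immediate corollary of ii) with $S = \{1\}$ plus maximality). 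Your treatment of iii) and of the final clause of ii) (via $I = \{0\}$, $S = \{1\}$) matches what the paper does — it too defers the spectrality argument to the classical ring case, citing \cite{Mar3}.
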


\begin{proof}
 
 \begin{enumerate}[i)]
  \item Let $\mathfrak{m} \subseteq A$ be a maximal ideal and $a,b \in A$ such that $ab \in \mathfrak{m}$. If $a \notin \mathfrak{m}$, consider the ideal $I$ generated by $\mathfrak{m} \cup \{a\}$
  
  \begin{align*}
   I = \bigcup \{m + t_1a + \cdots + t_na \colon n \geq 1, m \in \mathfrak{m}, t_i \in A \mbox{ for all } i=1, \ldots, n\}.
  \end{align*}

  Since $\mathfrak{m}$ is maximal ideal and $a \in I \setminus m$, $1 \in I$. So exist $m \in \mathfrak{m}$ and $t_1, \ldots, t_n \in A$ such that $1 \in m + t_1a + \cdots + t_na$.
  Then $b \in bm + t_1ab + \cdots t_nab \subseteq \mathfrak{m}$.

 \item Let $X = \{J \subseteq A \colon J \mbox{ is an ideal with } I \subseteq J, J \cap S = \emptyset \}$. With the order given by inclusion, $(X, \subseteq)$ is a non-empty partial order ($I \in X$). It is straightforward to conclude by Zorn's lemma that exists $p \in X$ maximal. If $p$ is not prime, exists $a,b \in A$ such that $ab \in p$ and $a,b \notin p$. Consider the ideal $J = p + (a) = \bigcup \{x + at_1 + \cdots + at_n \colon x \in p, t_1, \ldots, t_n \in A\}$. Since $a \in J \setminus p$ and $p \subseteq J$, by $p$ maximality we have $J \cap S \neq \emptyset$. Then exists $s_1 \in S, x \in p$ and $t_1, \ldots, t_n \in A$ such that $s_1 \in x + at_1 + \cdots +at_n$. Using the ideal $p + (b)$, also exists $s_2 \in S, y \in p$ and $l_1, \ldots, l_k \in A$ such that $s_2 \in y + bl_1 + \cdots + bl_n$. Then $s_1 s_2 \in xy + xbl_1 + \cdots + xbl_n + yat_1 + \cdots yat_n + \sum_{i,j} abt_il_j \subseteq S \cap p$, an absurd. Thus $p$ is prime.

 \item The proof is analogous to the ring case and can be found in Proposition 2.3 of \cite{Mar3}.
 \end{enumerate}

\end{proof}

\begin{remark} \label{remarkOpen}

\begin{itemize}
    \item 
If $f \colon A \to B$ is a morphism between multirings, for each $p \in \mbox{spec}(B)$, $f^{-1}(p) \in \mbox{spec}(A)$ is a prime ideal. This induces a map $f^* = f^{-1} \colon \mbox{spec}(B) \to \mbox{spec}(A)$ such that for each $a \in A$, $(f^*)^{-1} (D_A(a)) = D_B(f(a))$ and $f^* (D_B(f(a))) = D_A(a) \cap \mbox{Im}(f^{*})$. In particular, $f^{*}$ is a spectral\footnote{Given $X,Y$ spectral topological spaces, a function $f \colon X \to Y$ is spectral if pre-image of compact open is compact open. In particular, spectral maps are continuous and a continous map between Boolean topological spaces is spectral.} map and if $f$ is surjective, $f^* \colon \mbox{spec}(B) \to \mbox{Im}(f^*) \subseteq \mbox{spec}(A)$ is open. 

\item Let $A$ be a multiring and $p \in \mbox{spec}(A)$. Then $\overline{\{p\}} = \{q \in \mbox{spec}(A) \colon p \subseteq q\}$. In fact, given $q \in \mbox{spec}(A)$ with $p \subseteq q$ and $x \in A$ with $q \in D(x)$, then $p \in D(x)$ and thus $q \in \overline{\{p\}}$. Reciprocally, if $q \in \overline{\{p\}}$, given $x \notin q$, that is, $q \in D(x)$, we have $p \in D(x)$. Thus $p \subseteq q$. In particular, $p \in \mbox{spec}(A)$ is maximal if, and only if, $\{p\} \subseteq \mbox{spec}(A)$ is closed.
\end{itemize}

\end{remark}

 In what follows, $A$ is a multiring, $S \subseteq A$ is a multiplicative set and $I \subseteq A$
 is an ideal.  We sumarize some basic constructions with multirings.\\
 

 \textbf{Quocient by ideal.} Given $a, b \in A$, $a$ and $b$ are equivalent module $I$ (notation $a \sim_I b$) if $a - b \cap I \neq \emptyset$.
 Let $A / I$ the set of all equivalence classes of $\sim_I$. The multivaluated sum is given by $\overline{a} \in \overline{b} + \overline{c}$ if,
 and only if, exists $a', b', c' \in A$ with $a' \sim_I a$, $b' \sim_I b$ and $c' \sim_I c$ such that $a' \in b' + c'$. This relation can be described
 by $\overline{a} \in \overline{b} + \overline{c}$ if, and only if, exist $i \in I$ such that $a \in b + c + i$. The product is $\overline{a} \cdot \overline{b} = \overline{ab}$ and $- \overline{a} = \overline{-a}$. 
 The constants are induced by those of $A$. Then $A / I$ is a multiring and exist a canonical projection $\pi_I: A \to A / I$ which is a morphism of multirings with $\pi_I(a) = 0$ if, and only if, $a \in I$.
 
 \begin{proposition} \label{idealq}
  Let $A$ be a multiring and $I$ an ideal.
  
  \begin{enumerate}[i)]
   \item Let $a,b,c \in A$. Then $\pi(a) \in \pi(b) + \pi(c)$ if, and only if, exist $a' \in A$ such that $a' \sim_I a$ and $a' \in b + c$. \label{78}
   \item If $A$ is hyperring, then $A / I$ is hyperring.
   \item $I$ is prime if, and only if, $A / I$ is a multidomain. \label{idealqPrime}
   \item The ideal $I$ is maximal if, and only if, $A / I$ is a multifield. In particular, if $A$ is an hyperring, $I$ is maximal if, and only if, $A/I$ is a hyperfield. \label{idealqMax}

   \item Given a multiring morphism $f \colon A \to B$ with $f(I) = \{0\}$, then exists an unique morphism $\overline{f} \colon A / I \to B$ such that $f = \overline{f} \circ \pi$.
   
   \item The induced spectral map $\pi^{-1} \colon \mbox{spec}(A / I) \to \mbox{spec}(A)$ determines an homeomorphism between $\mbox{spec}(A/I)$ and $\{P \in \mbox{spec}(A) \colon I \subseteq P\}$.
   
   \item Let $f \colon A \to B$ a multiring morphism and $J \subseteq B$ an ideal. Given $I \subseteq f^{-1}(J)$ ideal, then exists a unique morphism $f_{I, J} \colon A / I \to B / J$
   such that

     \[
  \begin{tikzpicture}
  \matrix (m) [matrix of math nodes,row sep=3em,column sep=4em,minimum width=2em]
  {
     A & B \\
     A/I & B/J \\};
  \path[-stealth]
    (m-1-1) edge node [above]{$f$} (m-1-2)
	    edge node {} (m-2-1)
    (m-1-2)   edge node {} (m-2-2)
    (m-2-1) edge node [above]{$f_{I,J}$} (m-2-2);
  \end{tikzpicture}
  \]

   is a commutative diagram.
  \end{enumerate}

 \end{proposition}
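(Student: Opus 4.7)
The plan is to treat the seven parts in order, using part \ref{78} as a workhorse that gives a manageable description of the multivalued sum in $A/I$. The paragraph preceding the proposition already notes the equivalence $\overline{a} \in \overline{b}+\overline{c} \Leftrightarrow \exists\, i \in I,\ a \in b+c+i$; for part \ref{78} I want to show these are equivalent to the ``only-$a$-adjusted'' form $\exists\, a' \sim_I a$ with $a' \in b+c$. Starting from $a \in b+c+i$, pick $x \in b+c$ with $a \in x+i$; axiom i) of Definition \ref{multiDef} gives $x \in a-i$, so $x-a \ni -i \in I$, hence $x \sim_I a$. The converse runs the same calculation in reverse. Deriving this from the ``three-sided'' defining relation instead requires pushing the shifts on $b$ and $c$ through the sum by means of associativity (axiom iii)) together with stability of $I$ under $+$, and this is where the one genuinely delicate computation of the proposition lies.

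Parts (ii)--(iv) then follow from \ref{78} by direct translation. For (ii), any $\overline{x} \in \overline{b}\overline{d}+\overline{c}\overline{d}$ lifts by \ref{78} to some $x' \sim_I x$ with $x' \in bd+cd$; the hyperring axiom in $A$ produces $a \in b+c$ with $x' = ad$, so $\overline{a}\overline{d} = \overline{x}$ and $\overline{a} \in \overline{b}+\overline{c}$. Part \ref{idealqPrime} is immediate: $\overline{a}\,\overline{b} = 0$ iff $ab \in I$. For \ref{idealqMax}, the forward direction reuses the construction from the proof of Theorem \ref{PIT}.i): if $a \notin I$ then maximality forces $1$ to belong to the ideal generated by $I \cup \{a\}$, producing $i \in I$ and $t_j \in A$ with $1 \in i + t_1 a + \cdots + t_n a$, which says exactly that $\overline{a}$ is weakly invertible in $A/I$. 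The reverse direction pulls a weak inverse back to a representation of $1$ in any ideal strictly containing $I$. When $A$ is a hyperring, weak-invertibility coincides with invertibility, yielding the hyperfield strengthening.

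The remaining items are universal-property assertions. For (v), the assignment $\overline{f}(\overline{a}) \defeq f(a)$ is well-defined because $f(I) = \{0\}$: if $i \in (a-b)\cap I$ then $f(a) \in f(b) + f(i) = \{f(b)\}$, and the multiring axioms on $\overline{f}$ follow from those on $f$ via \ref{78}. For (vi), the map $\pi^{-1}\colon \mbox{spec}(A/I) \to \mbox{spec}(A)$ is injective (since $\pi$ is surjective) with image contained in $\{P \colon I \subseteq P\}$; surjectivity onto this set comes from (v) applied to the characteristic morphism $\chi_P \colon A \to \mathbb{K}$, which then factors through $\pi$ to give a prime of $A/I$ pulling back to $P$. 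The homeomorphism follows from the identity $(\pi^{-1})^{-1}(D_A(a)) = D_{A/I}(\overline{a})$ and openness of $\pi^{-1}$ onto its image recorded in Remark \ref{remarkOpen}. Finally (vii) drops out of (v) applied to $\pi_J \circ f \colon A \to B/J$, whose preimage of $0$ contains $I$ since $f(I) \subseteq J$.
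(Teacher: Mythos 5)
Your argument is correct and follows essentially the same route as the paper: part \ref{78} is derived from the ``$\exists\, i \in I,\ a\in b+c+i$'' description by pushing $i$ across the sum via axioms i) and iii) (the paper shifts $b$ first, you shift $i$; same idea), and the remaining items are reduced to \ref{78}, to the description of the ideal generated by $I\cup\{a\}$ for (iv), and to the universal property (v) for (vi) and (vii), exactly as the paper does. The only small added value is that you record the elementary equivalence $\overline{a}\,\overline{b}=0 \Leftrightarrow ab\in I$ for part \ref{idealqPrime}, which the paper dismisses as ``analogous to the ring case.''
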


\begin{proof}
 
 \begin{enumerate}[i)]

  \item If $\pi(a) \in \pi(b) + \pi (c)$, we know that exist $i \in I$ such that $a \in b + c + i$. Then $-b \in c + (-a + i)$. So exist $-a' \in -a + i$ such that $-b \in c - a'$ and
  then $a' \in b + c$ and $a' \sim_I a$.
  
  \item Assume that $A$ is an hyperring and let $a,b,c \in A$. Take $\pi(x) \in \pi(ca) + \pi(cb)$. Then by $\ref{78})$ exists $x' \in A$ with $x' \sim_I x$ and $x' \in ca + cb$. Since $A$
  is an hyperring, exist $y \in a+b$ such that $x' = cy$. Then $\pi(y) \in \pi(a) + \pi(b)$ and $\pi(x) = \pi(x') = \pi(cy)$.
  
  \item Analogous to the ring case.
  
  \item Assume that $I$ is maximal and let $a \notin I$ ($a \neq 0$ in $A/I$). Noting that the ideal generated by $I \cup \{a\}$ is $\bigcup \{i + x_1a + \cdots + x_na \colon n \geq 1, i \in I, x_1, \ldots, x_n \in A\}$, by $I$ maximality exists $i \in I$ and $x_1, \ldots, x_n \in A$ such that $1 \in i + x_1 a + \cdots + x_na$. Thus $1 \in x_1a + \cdots + x_na$ in $A/I$. The reciprocal follows by observing that if the ideal $I$ has the desired property, then for every $a \notin I$, the ideal generate by $I \cup \{a\}$ is improper.

 \item Let $f \colon A \to B$ such that $f(I) = \{0\}$. Note that
 
 \begin{itemize}
  \item If $\pi(a) = \pi(b)$, exist $i \in I$ such that $a \in b+ i$. Then $f(a) \in f(b) + 0 = \{f(b)\}$.
  
  \item If $\pi(a) \in \pi(b) + \pi(c)$, then exist $a' \in A$ such that $\pi(a) = \pi(a')$ and $a' \in b+c$. Then $f(a) = f(a') \in f(b) + f(c)$.
 \end{itemize}

 Thus, we can define $\overline{f}(\pi(a)) = f(a)$, which by previous observations is a multiring morphism. The uniqueness of $\overline{f}$ is trivial.

\item The bijection between $\mbox{spec}(A / I)$ and $\{p \in \mbox{spec}(A) \colon I \subseteq A\}$ follows by the universal property of $\pi$ using the equivalent characterization of prime ideals as morphisms to Krasner hyperfield. By Remark $\ref{remarkOpen}$, $\pi^{-1}$ is an homeomorphism with the image.

 \item Let $f \colon A \to B$ a multiring morphism, $J \subseteq B$ ideal and $I \subseteq f^{-1}(J)$. Consider the map $\pi_J \colon B \to B/J$. Since $\pi_J \circ f (I) = 0$, by the universal
 property for $\pi_I \colon A \to A/I$, exist unique multiring morphism $f_{I,J} \colon A / I \to B/J$ such that $f_{I,J} \circ \pi_I = \pi_J \circ f$.
 \end{enumerate}

\end{proof} 

\begin{example}
\begin{itemize}
 \item The signal function $\mbox{sgn} \colon \mathbb{R} \to \textbf{3}$ is an example of non-injective morphism that satisfies $\mbox{sgn}(x) = 0 \Leftrightarrow x = 0$. On the other hand, if $f \colon A \to B$ is surjective morphism satisfying the property in $\ref{idealq}, \ref{78})$, that is, for $x,y, z \in A$,
 
 \begin{equation*}
     \mbox{if }f(x) \in f(y) + f(z), \mbox{ then exists }x' \in A \mbox{ such that } f(x') = f(x) \mbox{ and } x' \in y + z,
 \end{equation*}
 
 then the induced morphism $\overline{f} \colon A / I \to B$ is an isomorphism.
 
 \item A multiring $D$ satisfies a strong cancellative property if for all $a,b,c \in D$ with $ab = ac$ and $a \neq 0$, then $a = b$. There are multidomains $D$ that not satisfies strong cancellative property. For example, let $R$ be a RRM (see, for instance, Definition $\ref{rrmdefinition}$) that has a prime ideal $p$ that is not maximal. Then $R / p$ is a multidomain ($\ref{idealq}, \ref{idealqPrime})$) and satisfies $x^3 = x$ for all $x \in R/p$. Thus if $R/p$ would satisfy the strong cancellative property, then $R/P$ would be a hyperfield and by $\ref{idealq}, \ref{idealqMax})$ $p \in \mbox{spec}(R)$ should be a maximal ideal, a contradiction.
 \end{itemize}
 
\end{example}

\begin{remark}
\begin{itemize}
    \item The map $f_{I,J}$ is usually denoted only by $f_J$ when $I = f^{-1}(J)$.
\end{itemize}

\end{remark}

\textbf{Localization.} The elements of $S^{-1} A$ are of the form $a/s$ with $a \in A$ and $s \in S$ and $a/s = b/t$ if, and only if, exist $u \in S$ such that
$atu = bsu$. The sum is defined by $a/s \in b/t + c/u$ if, and only if, exist $v \in S$ such that $atuv \in bsuv + cstv$. The product is $a/s \cdot b/t \coloneqq ab/st$.
The unit element is $1 / 1$ and the zero element is $0/1$. All those are well-defined and make $S^{-1}A$ into a multiring. The canonical map $\rho_S \colon A \to S^{-1}A$
given by $\rho_S (a) = a / 1$ is a morphism and $\rho_S (S) \subseteq (S^{-1}A)^{\times}$.

 \begin{proposition} \label{localization}
  Let $A$ be a multiring and $S$ a multiplicative set.
  
  \begin{enumerate}[i)]
   \item If $A$ is hyperring, then $S^{-1}A$ is hyperring.
   \item $S^{-1}A = 0$ if, and only if, $0 \in S$.
   
   \item Given a multiring morphism $f \colon A \to B$ with $f(S) \subseteq B^{\times}$, exist unique morphism $\overline{f} \colon S^{-1}A \to B$ such that $f = \overline{f} \circ \rho_S$.
   In particular, $\rho_S$ is an epimorphism.
   
   \item The induced spectral map $\rho^{-1} \colon \mbox{spec}(S^{-1}A) \to \mbox{spec}(A)$ determines an homeomorphism between $\mbox{spec}(S^{-1}A)$ and $\{P \in \mbox{spec}(A) \colon P \cap S = \emptyset\}$.
   
   \item Let $f \colon A \to B$ a multiring morphism, $T \subseteq B$ a multiplicative set and $S \subseteq f^{-1}(T)$ another multiplicative set. Then exists an unique morphism $f_{S,T} \colon S^{-1}A \to T^{-1}B$
   such that
      \[
  \begin{tikzpicture}
  \matrix (m) [matrix of math nodes,row sep=3em,column sep=4em,minimum width=2em]
  {
     A & B \\
     S^{-1}A & T^{-1}B \\};
  \path[-stealth]
    (m-1-1) edge node [above]{$f$} (m-1-2)
	    edge node {} (m-2-1)
    (m-1-2)   edge node {} (m-2-2)
    (m-2-1) edge node [above]{$f_{S,T}$} (m-2-2);
  \end{tikzpicture}
  \]
  is a commutative diagram.
  \end{enumerate}

 \end{proposition}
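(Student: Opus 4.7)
The plan is to handle the five items in sequence, with (i) and (iv) carrying the essential content while (ii), (iii), (v) are brief manipulations or universal-property invocations.

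For part (i), I would start with $x/v \in (b/t)(d/u) + (c/s)(d/u) = bd/(tu) + cd/(su)$ and unfold the definition of addition in $S^{-1}A$ to extract $w \in S$ with $x \cdot tu \cdot su \cdot v \cdot w \in (bd)(su)(v)(w) + (cd)(tu)(v)(w)$ in $A$. Pulling the common factor $d$ out of the right-hand side (using half-distributivity reversed, which is exactly the hyperring axiom applied in $A$), I obtain some $y \in b(suvw) + c(tuvw)$ with the appropriate multiple of $x$ equal to $yd$. Then $a \defeq y$ packaged as $a/(tsuvw)$ (or an equivalent representative) gives an element of $b/t + c/s$ whose product with $d/u$ is $x/v$. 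The bookkeeping of the auxiliary $S$-elements is what makes this tedious; after that, the result is immediate.

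Parts (ii), (iii), and (v) are quick. For (ii), $0 \in S$ gives $a \cdot 1 \cdot 0 = 0 \cdot s \cdot 0$ so $a/s = 0/1$ for all $a,s$; conversely, $1/1 = 0/1$ forces some $u \in S$ with $u = 0$. For (iii), set $\overline{f}(a/s) \defeq f(a) \cdot f(s)^{-1}$: well-definedness uses the $u$-witness in equality of fractions and that $f(u)$ is a unit; morphism axioms are routine; uniqueness comes from $\rho_S(a) \cdot \rho_S(s)^{-1} = a/s$, which simultaneously yields that $\rho_S$ is an epimorphism (any two morphisms out of $S^{-1}A$ agreeing after composition with $\rho_S$ must agree on all $a/s$). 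For (v), apply (iii) to $\rho_T \circ f \colon A \to T^{-1}B$: the hypothesis $S \subseteq f^{-1}(T)$ yields $(\rho_T \circ f)(S) \subseteq \rho_T(T) \subseteq (T^{-1}B)^\times$, producing the unique $f_{S,T}$.

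For part (iv), I would first show that $\rho^{*} = \rho_S^{-1} \colon \mbox{spec}(S^{-1}A) \to \mbox{spec}(A)$ lands inside $\{P : P \cap S = \emptyset\}$: if $s \in S \cap \rho^{-1}(q)$, then $s/1 \in q$, but $s/1$ is invertible in $S^{-1}A$ (inverse $1/s$), contradicting $q$ being proper. Surjectivity onto this subset comes cleanly from the Krasner-hyperfield characterization of primes: given $p \cap S = \emptyset$, the map $\chi_p \colon A \to \mathbb{K}$ sends $S$ into $\{1\} = \mathbb{K}^\times$, so by (iii) it factors as $\overline{\chi_p} \circ \rho_S$, and $\overline{\chi_p}^{-1}(0)$ is a prime whose pullback is $p$. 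Injectivity follows from the ideal structure: $a/1 \in q$ implies $a/s = (1/s)(a/1) \in q$, so $q$ is determined by $\rho^{-1}(q)$. For the topological upgrade, I would invoke Remark \ref{remarkOpen}: since $1/s$ is invertible, $D(a/s) = D(a/1)$, and $(\rho^{*})^{-1}(D(a)) = D(a/1)$, so $\rho^*$ is spectral, while $\rho^*(D(a/1)) = D(a) \cap \mbox{Im}(\rho^*)$ shows it is open onto its image—hence a homeomorphism.

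The main obstacle is the hyperring-preservation step (i): it is the only place where one cannot just quote a universal property, and the witness $w \in S$ produced by the localization-equality must be threaded through the hyperring axiom of $A$ in a way that respects the $S$-torsion built into the localization equivalence. Everything else reduces to universal properties or to the Krasner-hyperfield reformulation of prime ideals already set up in the preliminaries.
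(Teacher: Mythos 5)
Your plan tracks the paper's proof essentially step for step: (i) by unfolding the fraction-addition condition, pulling the common factor out of the right-hand side via the hyperring axiom in $A$, and repackaging the resulting witness as a fraction; (ii) by the same two one-line manipulations; (iii) by the explicit $\overline{f}(a/s) = f(a)f(s)^{-1}$ with well-definedness from the $u$-witness; (iv) via the Krasner-hyperfield characterization plus Remark \ref{remarkOpen}; and (v) by composing with $\rho_T$ and citing (iii). The only deviations are cosmetic — a slightly different choice of variable names in (i) (and an extra factor of $v$ on the left side of your displayed containment, which cancels anyway) and a more explicitly spelled-out injectivity/surjectivity argument in (iv) than the paper bothers to write — so this is the same argument.
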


\begin{proof}
 
 \begin{enumerate}[i)]
  \item Let $x,a,b,c \in A$ and $s,u,t,w \in S$ such that $x/s \in c/w (a/u) + c/w (b/t) = ca/wu + cb/wt$. Then exist $p \in S$ such that $xputw^2 \in ca p stw + cb p suw$.
  Since $A$ is hyperring, exist $d \in a t + b u$ such that $xputw^2 = d (scwp)$. Then $d/tu \in a/u + b/t$ and $x/s = d/tu \cdot c/w$. So $S^{-1}A$ is hyperring.
  
  \item If $0 \in S$, it is obvious that $S^{-1}A = 0$. Reciprocally, if $S^{-1}A = 0$, then $1 = 0$ in $S^{-1}A$, that is, exists $s \in S$ such that $s = s \cdot 1 = s \cdot 0 = 0$.

  \item Let $f \colon A \to B$ with $f(S) \subseteq B^{\times}$. Note that
  
  \begin{claim*}
   If $a/s \in b/t + c/u$, then $f(a)f(s)^{-1} \in f(b)f(t)^{-1} + f(c)f(u)^{-1}$. In particular, if $a/s = b/t$, then $f(a)f(s)^{-1} = f(b) f(t)^{-1}$.
   \end{claim*}
   
   \begin{proof}
   Take $p \in S$ such that $aptu \in bpsu + cpst$. Since $f$ is a morphism, $f(a)f(p)f(t)f(u) \in f(b)f(p)f(s)f(u) + f(c)f(p)f(s)f(t)$. But $f(p),f(s),f(t),f(u) \in B^{\times}$ and so $f(a)f(s)^{-1} \in f(b)f(t)^{-1} + f(c)f(u)^{-1}$.
  \end{proof}
  
  Thus we can define $\overline{f} \colon S^{-1}A \to B$ by $\overline{f}(a/s) = f(a)f(s)^{-1}$  and then by above claim $\overline{f}$ is a multiring morphism. The uniqueness of $\overline{f}$ is trivial.
  
  \item The bijection between $\mbox{spec}(S^{-1}A)$ and $\{p \in \mbox{spec}(A) \colon p \cap S = \emptyset\}$ follows by the universal property of $\rho$ using the equivalent characterization of prime ideals as morphisms to Krasner hyperfield. 
  By Remark $\ref{remarkOpen}$, $\rho^{-1}$ is an homeomorphism with the image.
  
  \item Follows directly by the universal property of $A \to S^{-1}A$.

 \end{enumerate}

\end{proof}

\begin{remark}
 \begin{itemize}
  \item If $A$ is a multidomain, we can define the fraction field of $A$ by $ff(A) \coloneqq (A \setminus 0)^{-1} A$. Note that $ff(A)$ is a hyperfield and the canonical map
$\rho \colon A \to ff(A)$ is not necessarily injective (Example 2.5 in \cite{Mar3}). Given a prime ideal $p \in \mbox{spec}(A)$, we define $K_A(p) \defeq ff (A / p)$ and $A_p = (A \setminus p)^{-1} A$. Let $\rho \colon A \to A_p$ the canonical morphism. Note that, by Proposition $\ref{localization}$, iv), the multiring $A_p$ has an unique maximal ideal given by $A_p \rho(p) = \{\frac{x}{s} \colon x \in p, s \in A \setminus p\}$ and it is denoted by $pA_p$.

\item The morphism $f_{S,T}$ is usually denoted only by $f_T$ when $S = f^{-1}(T)$.
\end{itemize}

\end{remark}

\begin{proposition} \label{kAp}
  Let $A$ a multiring and $p \in \mbox{spec}(A)$ a prime ideal. Then exists an unique map $A_p / pA_p \to K_A(p)$ such that 
 
    \[
  \begin{tikzcd}
    & A \arrow{dr} \arrow{dl} \\ 
  A_p /pA_p \arrow{rr} && K_A(p)
  \end{tikzcd}
  \]
  is a commutative diagram. Furtheremore, it is an isomorphism.
\end{proposition}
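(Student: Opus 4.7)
The plan is to exhibit $\phi: A_p/pA_p \to K_A(p)$ and its inverse $\psi$ by repeated application of the universal properties of localization and quotient established in Propositions \ref{idealq} and \ref{localization}, and then let uniqueness in these properties simultaneously force both uniqueness of $\phi$ in the stated triangle and the isomorphism property.

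First I would construct $\phi$. Start with the composite $A \xrightarrow{\pi_p} A/p \xrightarrow{\rho} K_A(p)$. Since $A/p$ is a multidomain (Proposition \ref{idealq}, \ref{idealqPrime}), every $s \in A \setminus p$ has $\pi_p(s) \neq 0$ and therefore becomes invertible in the hyperfield $K_A(p) = ff(A/p)$. By Proposition \ref{localization}, iii), this composite factors uniquely through $A_p$. The induced morphism $A_p \to K_A(p)$ kills $pA_p$ (any generator $x/s$ with $x \in p$ goes to $\pi_p(x)/\pi_p(s) = 0$), so by Proposition \ref{idealq}, v) it descends uniquely to $\phi: A_p/pA_p \to K_A(p)$. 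Commutativity of the triangle with $A$ is built into the construction. Uniqueness of $\phi$ in the triangle then follows because $A \to A_p$ is an epimorphism (Proposition \ref{localization}, iii)) and $A_p \to A_p/pA_p$ is surjective, hence $A \to A_p/pA_p$ is an epimorphism in the category of multirings.

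Next I would build $\psi: K_A(p) \to A_p/pA_p$ symmetrically. Begin with $A \xrightarrow{\rho} A_p \xrightarrow{q} A_p/pA_p$; since $\rho(p) \subseteq pA_p$, this factors via Proposition \ref{idealq}, v) through a morphism $A/p \to A_p/pA_p$. The key step is to verify that this induced map sends every nonzero class to an invertible element, so that one can invoke the universal property of $ff(A/p)$ (which is the localization of $A/p$ at the multiplicative set $(A/p) \setminus \{0\}$) to extend it uniquely to $\psi: K_A(p) \to A_p/pA_p$. Invertibility is free here: for $a \notin p$, the element $a/1 \in A_p$ has inverse $1/a \in A_p$, and invertibility survives the quotient $q$.

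Finally, both $\phi \circ \psi$ and $\psi \circ \phi$ sit in commutative triangles with the canonical maps from $A$, so by the uniqueness clauses in the universal properties used above they must equal the identities of their domains; this gives the isomorphism. The main technical point I expect to need care with is the invertibility check in the construction of $\psi$ — it is not enough that the image of $A \setminus p$ be nonzero in $A_p/pA_p$, one genuinely needs inverses, and this uses that they already exist upstairs in $A_p$ before quotienting. All other steps are routine bookkeeping driven by the universal properties already proved.
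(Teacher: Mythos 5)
Your proposal is correct and takes essentially the same approach as the paper: the paper's proof also rests on recognizing that $A \to K_A(p)$ and $A \to A_p/pA_p$ both enjoy the universal property among morphisms $f\colon A\to B$ with $f(p)=0$ and $f(A\setminus p)\subseteq B^\times$. You simply unwind that abstract observation into an explicit construction of both directional maps and a verification that they are mutually inverse, and your flagged technical point (that in building $\psi$ one needs genuine inverses in $A_p/pA_p$, inherited from $A_p$, not merely nonvanishing) is exactly right and handled correctly.
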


\begin{proof}
 Consider the compositions $i_1 \colon A \to A/p \to K_A(p), i_2 \colon A \to A_p \to A_p/pA_p$. Note that, by propositions $\ref{idealq}$ and $\ref{localization}$, they satisfies the following universal property: given a map $f \colon A \to B$ such that
  $f(p)=0$ and $f(A \setminus p) \subseteq B^{\times}$, then exists uniques $f_1 \colon K_A(p) \to B, f_2 \colon A_p/pA_p \to B$ such that $f = f_1 \circ i_1 = f_2 \circ i_2$.
  These universal properties assure that exists unique $f$ with $i_2 = i_1 \circ f$ and $f$ should be an isomorphism.
\end{proof}

\textbf{Marshall Quotient.} Given $a,b \in A$, $a,b$ are said Marshall equivalent (denoted by $a \sim_S b$) if exists $s, t \in S$ such that $as = bt$. The
set of all induced equivalence classes is denoted by $A /_m S$. The multivaluated sum is defined by $\overline{a} \in \overline{b} + \overline{c}$ if, and only if,
exists $a', b', c' \in A$ such that $a' \sim_S a$, $b' \sim_S b$, $c' \sim_S c$ and $a' \in b' + c'$ (note that $\overline{a} \in \overline{b} + \overline{c}$ 
if, and only if, exists $s,t, u \in S$ such that $as \in bt + cu$). The product is defined by $\overline{a} \cdot \overline{b} = \overline{ab}$ and the constants are those induced by $A$.
Then $A /_m S$ is a multiring and exist a canonical map $\pi_S \colon A \to A /_m S$ is a surjective morphism and $\pi_S(S) = \{1\}$.

\begin{definition}
 Let $A$ a multiring and $S \subseteq A$ multiplicative subset. If for all $xs \in S$ with $s \in S$ we have $x \in S$, then $S$ is called cancellative. Define also $\overline{S} = \{x \in A \colon xs \in S \mbox{ for some } s \in S\}$. Note that $S$ is cancellative if, and only if, $S = \overline{S}$.
\end{definition}

\begin{proposition} \label{marshallQ}
  Let $A$ be a multiring and $S$ a multiplicative set.
  
  \begin{enumerate}[i)]
    \item $\overline{S}$ is cancellative multiplicative set and given $T \subseteq A$ multiplicative, $S,T$ induces the same equivalence relation in $A$ if, and only if, $\overline{S} = \overline{T}$. In particular, $A /_m \overline{S} = A /_m S$.
  
   \item If $A$ is hyperring, $A /_m S$ is hyperring.
   
   \item Given a multiring morphism $f \colon A \to B$ with $f(S) = \{1\}$, exists an unique morphism $\overline{f} \colon A /_m S \to B$ such that $f = \overline{f} \circ \rho$.
   
   \item The induced spectral map $\pi^{-1} \colon \mbox{spec}(A /_m S) \to \mbox{spec}(A)$ determines an homeomorphism between $\mbox{spec}(A /_m S)$ and $\{P \in \mbox{spec}(A) \colon P \cap S = \emptyset\}$.
   \item Let $f \colon A \to B$, $T \subseteq B$ a multiplicative set and $S \subseteq f^{-1}(T)$ another multiplicative set. Then exist unique morphism $f_{S,T} \colon A /_m S \to B /_m T$
   such that
      \[
  \begin{tikzpicture}
  \matrix (m) [matrix of math nodes,row sep=3em,column sep=4em,minimum width=2em]
  {
     A & B \\
     A /_m S & B /_m T \\};
  \path[-stealth]
    (m-1-1) edge node [above]{f} (m-1-2)
	    edge node {} (m-2-1)
    (m-1-2)   edge node {} (m-2-2)
    (m-2-1) edge node [above]{$f_{S,T}$} (m-2-2);
  \end{tikzpicture}
  \]
  is a commutative diagram.
  \end{enumerate}

 \end{proposition}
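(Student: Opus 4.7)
The plan is to follow the same pattern used for Propositions \ref{idealq} and \ref{localization}: prove the universal property (iii) first, then deduce the spectrum description (iv) and functoriality (v) essentially formally from it, with (i) and (ii) requiring some hands-on unpacking of the definition of $\sim_S$.

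For (i), I would first check that $\overline{S}$ is multiplicative ($1\in S\subseteq\overline{S}$; if $xs,yt\in S$ then $(xy)(st)\in S$) and then that $\overline{S}=\overline{\overline{S}}$: if $xu\in S$ with $u\in\overline{S}$ and $uv\in S$ for some $v\in S$, then $x\cdot(uv)\in S$ via $xuv$, so $x\in\overline{S}$. The key observation for comparing quotients is that the $\sim_S$-class of $1$ is exactly $\overline{S}$. From this, $\sim_S=\sim_T$ forces $\overline{S}=\overline{T}$; conversely, any witness $as=bt$ with $s,t\in\overline{S}$ can be promoted to a witness in $S$ by choosing $s',t'\in S$ with $ss',tt'\in S$ and multiplying through, so $\overline{S}=\overline{T}$ yields $\sim_S=\sim_T$. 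The same promotion argument shows that the multivalued sum is unchanged when $S$ is replaced by $\overline{S}$, giving $A/_m S = A/_m \overline{S}$.

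For (ii), given $\overline{x}\in\overline{c}(\overline{a}+\overline{b})=\overline{ca}+\overline{cb}$, unfold to $xs\in cat+cbu$ for some $s,t,u\in S$; invoke the hyperring axiom in $A$ to produce $y\in at+bu$ with $xs=cy$, and then observe $\overline{y}\in\overline{a}+\overline{b}$ (since $\overline{t}=\overline{u}=1$) and $\overline{x}=\overline{c}\,\overline{y}$ (since $\overline{s}=1$). For (iii), define $\overline{f}(\overline{a})\defeq f(a)$; well-definedness is immediate because $f(S)=\{1\}$ collapses $f(as)=f(bt)$ to $f(a)=f(b)$; the morphism axioms are routine, and uniqueness follows from surjectivity of $\pi_S$. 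For (iv), use the Krasner characterization of primes: primes $P\subseteq A$ with $P\cap S=\emptyset$ correspond via $\chi_P$ to morphisms $A\to\mathbb{K}$ sending $S$ to $1$, and by (iii) these are exactly the morphisms $A/_m S\to\mathbb{K}$, i.e.\ primes of $A/_m S$; the identification $\pi_S^{-1}(D(\overline{a}))=D(a)\cap\pi_S^{-1}(\mathrm{spec}(A/_m S))$ and Remark~\ref{remarkOpen} (applied to the surjection $\pi_S$) upgrade this to the claimed homeomorphism. For (v), apply (iii) to the composite $\pi_T\circ f\colon A\to B/_m T$: since $S\subseteq f^{-1}(T)$, we have $\pi_T(f(S))\subseteq \pi_T(T)=\{1\}$, so a unique $f_{S,T}$ factors the composite.

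The main obstacle I expect is item (i), and specifically the verification that passing from $S$ to $\overline{S}$ preserves the multivalued sum: the sum's definition carries three existentially quantified elements of $S$, and one must absorb the promotion from $\overline{S}$ to $S$ simultaneously at all three slots by multiplying the relation $as\in bt+cu$ with an appropriate product of saturating elements. Once this is done, everything downstream simplifies, because one is free to replace any multiplicative set by its saturation without changing the quotient, and comparisons between different Marshall quotients reduce to comparisons of saturations.
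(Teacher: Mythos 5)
Your plan matches the paper's proof essentially verbatim: the universal property (iii) by observing $f(S)=\{1\}$ collapses $\sim_S$, (iv) via the Krasner-hyperfield characterization of prime ideals together with Remark~\ref{remarkOpen}, (v) by applying (iii) to $\pi_T\circ f$, and (i)--(ii) by the promotion and hyperring-axiom computations you describe. Two small points: the ``three-slot promotion'' you flag as the main obstacle is in fact automatic once $\sim_S=\sim_{\overline{S}}$ is established, because the paper's primary definition of the multivalued sum on $A/_m S$ refers only to the equivalence relation (existence of $a'\sim_S a$, $b'\sim_S b$, $c'\sim_S c$ with $a'\in b'+c'$), the version with $as\in bt+cu$ and $s,t,u\in S$ being a derived characterization; and for cancellativity of $\overline{S}$ your argument treats the case $xu\in S$ whereas you need $xu\in\overline{S}$, an easy fix by absorbing one additional saturating element exactly as the paper does.
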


\begin{proof}
 \begin{enumerate}[i)]
  \item First note that
  $\overline{S}$ is multiplicative and cancellative.\\
    Since $S \subseteq \overline{S}$, $1 \in \overline{S}$. Given $x, y \in \overline{S}$, exists $s,t \in S$ such that $xs,yt \in S$. Then $xyst \in S$ and so
    $xy \in \overline{S}$. Also if $xs \in \overline{S}$ with $s \in \overline{S}$, then exists $s_1,s_2 \in S$ such that $xss_1, ss_2 \in S$. Therefore $x(ss_2)s_1 \in S$ and so $x \in \overline{S}$.\\
    
    Now assume that $S,T$ satisfies $\sim_S = \sim_T$. Since $\overline{S} = \{x \in A \colon x \sim_S 1\}$, we have $\overline{S} = \overline{T}$. Reciprocally, if $\overline{S} = \overline{T}$, given $x,y \in A$ with $x \sim_S y$, exists $s,s' \in S$ such that $xs = ys'$. By hypothesis exists $t,t' \in T$ with $st,st' \in T$. Then $xs(tt') = xs'(tt')$ and $x \sim_T y$. Change the roles of $S$ and $T$, we see that $\sim_S = \sim_T$.

  \item Let $x,a,b,c \in A$ such that $\pi(x) \in \pi(ca) + \pi(cb)$. Then exists $s,t,u \in S$ such that $xs \in cat + cbu$. Since $A$ is an hyperring, exist $d \in at + bu$ such that
  $xs = cd$. Then $\pi(x) = \pi(cd)$ and $\pi(d) \in \pi(a) + \pi(b)$. Thus $A/_mS$ is hyperring.
  
  \item Let $f \colon A \to B$ a multiring morphism such that $f(S) \subseteq \{1\}$. The note that
  
  \begin{itemize}
   \item If $\pi(a) \in \pi(b) + \pi(c)$, then $f(a) \in f(b) + f(c)$. In particular, if $\pi(a) = \pi(b)$, then $f(a) = f(b)$.
  \end{itemize}

  Thus, we just need to define $\overline{f} (\pi(a)) = f(a)$: $\overline{f} \colon A /_m S \to B$ is a well-defined multiring morphism. The uniqueness is trivial.
  
    \item The bijection $\pi^{-1} \colon \mbox{spec}(A /_m S) \to \{P \in \mbox{spec}(A) \colon P \cap S = \emptyset\}$ is a direct consequence of the universal property of $\pi \colon A \to A /_m S$ and the characterization of prime ideals as morphisms to Krasner hyperfield. By Remark $\ref{remarkOpen}$, $\pi^{-1}$ is an homeomorphism with the image.

    \item Follows directly by the universal property of $A \to A /_m S$.
 \end{enumerate}

\end{proof}

\begin{remark}
The morphism $f_{S,T}$ is usually denoted only by $f_T$ when $S = f^{-1}(T)$.
\end{remark}

\begin{proposition} 
 Let $A$ a multiring. Let $S$ a multiplicative set and $\pi \colon A \to A /_m S$. Given $q \in \mbox{spec}(A /_m S)$, let $p = \pi^{-1}(q) \in \mbox{spec}(A)$. Consider the canonical map $g \colon A \to K_A(p)$ and let
  $S_p = g(S)$. Then exist unique morphism $K_{A /_m S} (q) \to K_A(p) /_m S_p$ such that
  
   \[
  \begin{tikzcd}
    & A \arrow{dr} \arrow{dl} \\ 
  K_{A /_m S} (q) \arrow{rr}{} && K_A(p)/_m S_p
  \end{tikzcd}
  \]

  is a commutative diagram. Furtheremore, it is an isomorphism.
\end{proposition}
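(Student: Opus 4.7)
The strategy is to show that both $K_{A/_m S}(q)$ and $K_A(p)/_m S_p$ satisfy the same universal property with respect to $A$, namely being initial among morphisms $f \colon A \to B$ such that $f(S) = \{1\}$, $f(p) = 0$, and $f(A \setminus p) \subseteq B^\times$. An isomorphism between them making the triangle commute then follows by the usual Yoneda-type argument. Before starting, note that $\pi(S) = \{1\}$ together with $1 \notin q$ forces $p \cap S = \emptyset$; hence $S_p = g(S) \subseteq K_A(p) \setminus \{0\} = K_A(p)^\times$, since $K_A(p)$ is a hyperfield.

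Construction of $\Phi \colon K_{A/_m S}(q) \to K_A(p)/_m S_p$: consider the composite $\phi \defeq \pi_{S_p} \circ g$. Since $\phi(S) = \pi_{S_p}(S_p) = \{1\}$, Proposition \ref{marshallQ} iii) produces a unique $\bar\phi \colon A/_m S \to K_A(p)/_m S_p$ with $\bar\phi \circ \pi = \phi$. For $\pi(x) \in q$ one has $x \in p$, whence $g(x) = 0$ and $\bar\phi(\pi(x)) = 0$; for $\pi(x) \notin q$ one has $x \notin p$, so $g(x) \in K_A(p)^\times$ and $\bar\phi(\pi(x))$ is a unit in $K_A(p)/_m S_p$. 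Applying Proposition \ref{idealq} v) factors $\bar\phi$ through $(A/_m S)/q$, and then Proposition \ref{kAp} applied to $A/_m S$ at $q$ (together with the universal property of the localization at $(A/_m S) \setminus q$) factors it further through $K_{A/_m S}(q)$, yielding the desired $\Phi$.

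The inverse is built symmetrically. The composite $A \xrightarrow{\pi} A/_m S \to K_{A/_m S}(q)$ kills $p$ (since $\pi(p) \subseteq q$) and sends $A \setminus p$ into units (since $\pi$ maps $A \setminus p$ into $(A/_m S) \setminus q$, which is inverted inside $K_{A/_m S}(q)$); the universal property of $K_A(p)$ then produces a unique $\psi \colon K_A(p) \to K_{A/_m S}(q)$. Because the original composite sends $S$ to $\{1\}$, we get $\psi(S_p) = \{1\}$, and Proposition \ref{marshallQ} iii) descends $\psi$ to $\Psi \colon K_A(p)/_m S_p \to K_{A/_m S}(q)$. The identities $\Psi \circ \Phi = \mathrm{id}$ and $\Phi \circ \Psi = \mathrm{id}$ then follow from uniqueness in the universal factorizations, both composites making the respective triangles over $A$ commute.

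\emph{Main obstacle.} The content is not deep, but the proof requires careful bookkeeping across four consecutive universal properties (Marshall quotient, ideal quotient, localization, and the fraction hyperfield). The delicate verification is that, in each direction, the relevant composite sends $A \setminus p$ to the units of the target; this relies crucially on the hyperfield structure of $K_A(p)$ (so that $g(A \setminus p) \subseteq K_A(p)^\times$) and on the disjointness $p \cap S = \emptyset$ established at the outset.
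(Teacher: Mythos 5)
Your proof is correct and uses the same idea as the paper: the paper simply states that both $j_1 \colon A \to K_{A/_m S}(q)$ and $j_2 \colon A \to K_A(p)/_m S_p$ are initial among morphisms $f \colon A \to B$ with $f(S)=\{1\}$, $f(p)=0$, $f(A\setminus p)\subseteq B^\times$, and leaves the rest implicit. You spell out the two factorizations explicitly and verify the inverse pair, which is a worked-out version of the same universal-property argument.
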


\begin{proof}
The strategy is the same as in Proposition $\ref{kAp}$. Consider the compositions $j_1 \colon A \to A/_m S \to K_{A /_m S}(q), j_2 \colon A \to K_A(p) \to K_A (p) /_m S_p$. They satisfies the following universal property: given a map $f \colon A \to B$ such that
  $f(S) = 1, f(p)=0$ and $f(A \setminus p) \subseteq B^{\times}$, then exists uniques $f_1 \colon K_{A/_mS}(q) \to B, f_2 \colon K_A (p) /_m S_p \to B$ such that $f = f_1 \circ j_1 = f_2 \circ j_2$.
\end{proof}

 \textbf{Inductive limits.} Let $L$ be a language and consider the category $\textbf{L-mod}$ of all $L$-models. Given a right-directed set (or simply directed) $\langle I, \leq \rangle$, an I-inductive system of $L$-models is a functor $\mathcal{M} \colon \langle I, \leq \rangle \to \textbf{L-mod}$ (if $i \leq j$, we denote the morphism $\mathcal{M}(i \leq j)$ by $\mu_{i,j}$). A colimit of such system is denoted by $\varinjlim \mathcal{M}$ or $\varinjlim \mathcal{M}_i$.
 
 A dual cone over the inductive system $\mathcal{M}$ is a tuple $\langle A, \{\mu_i \colon i \in I\}\rangle$ such that $A$ is a $L$-strucutre and $\mu_i \colon \mathcal{M}_i \to A$ is a $L$-morphism with $\mu_i \circ \mu_{j,i} = \mu_j$ for every $j \leq i$. For the convenience of the reader, we summarize the relations between this notions:

\begin{proposition} \label{filtColim}
 Let $\mathcal{M} = \langle I, \leq \rangle \to L-\mbox{mod}$ be a inductive system of $L$-structures. Then the colimit exists in $L$-mod and a dual cone over $\mathcal{M}$, $\langle M, \{\mu_i \colon i \in I\} \rangle$, is isomorphic to $\varinjlim \mathcal{M}$ if, and only if,
    
    \begin{enumerate}[a)]
        \item $M = \bigcup_{i \in I} \mu_i (\mathcal{M}_i)$.
        
        \item If $\varphi (v_1, \ldots, v_n)$ is an atomic $L$-formula and $s_1, \ldots, s_n \in \mathcal{M}_i$ satisfies $M \vDash \varphi(\mu_i(s_1), \ldots, \mu_i(s_n))$, then exists $k \in I$ with $k \geq i$ such that
        
        \begin{align*}
        \mathcal{M}_k \vDash \varphi(\mu_{i,k} (s_1), \ldots, \mu_{i,k}(s_n)).    
        \end{align*}
        
        In particular, given $i,j \in I$ and $a \in \mathcal{M}_i, b \in \mathcal{M}_j$, we have $\mu_i(a) = \mu_j(b)$ if, and only if, exists $k \geq i,j$ such that
        $\mu_{i,k}(a) = \mu_{j,k}(b)$.
    \end{enumerate}
    
    Furthermore, let $\varphi(\overline{v})$ be a disjunction of geometric formulas \footnote{a formula $\varphi (\overline{t})$ is geometric if it is a negation of an atomic formula or of the form $\forall \overline{v} (\psi_1(\overline{v}, \overline{t}) \rightarrow \exists \overline{w} \psi_2(\overline{w}, \overline{v}, \overline{t}))$, where $\psi_1, \psi_2$ are positive and quantifier-free.} and $\overline{s} \in \mathcal{M}_i$. Let $S_{\varphi} = \{k \in I \colon k \geq i \mbox{ and } \mathcal{M}_k \models \varphi(\mu_{i,k}(\overline{s})) \}$. If $S_{\varphi}$ is cofinal in $I$, then $M \models \varphi(\mu_i(\overline{s}))$.
\end{proposition}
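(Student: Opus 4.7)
My plan is to construct the colimit explicitly, verify it satisfies (a) and (b), show that any dual cone satisfying (a) and (b) is canonically isomorphic to the colimit, and finally handle the cofinality statement by induction on the structure of geometric formulas. For the construction, I would form the disjoint union $\bigsqcup_{i \in I} \mathcal{M}_i$ and quotient by the relation $a \sim b$ (for $a \in \mathcal{M}_i$, $b \in \mathcal{M}_j$) defined by the existence of some $k \geq i,j$ with $\mu_{i,k}(a) = \mu_{j,k}(b)$. Directedness of $I$ makes $\sim$ an equivalence, and the $L$-structure on the quotient $M_\infty$ is induced by picking representatives in a common stage (possible by directedness) and applying the operations/relations of $\mathcal{M}_k$; well-definedness comes from functoriality of $\mathcal{M}$. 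The canonical maps $\nu_i \colon \mathcal{M}_i \to M_\infty$ form a dual cone that satisfies the universal property of a colimit directly. By construction (a) is clear, and (b) follows from unfolding how atomic formulas are defined in $M_\infty$ via common-stage representatives.

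For the characterization, given a dual cone $\langle M, \{\mu_i\}\rangle$ satisfying (a) and (b), I would define $\Phi \colon M_\infty \to M$ by $[a] \mapsto \mu_i(a)$ for $a \in \mathcal{M}_i$; well-definedness follows from the compatibility $\mu_j \circ \mu_{i,j} = \mu_i$ together with atomic preservation by the $\mu_k$. Condition (a) gives surjectivity of $\Phi$, and (b) applied to the atomic formula $v_1 = v_2$ gives injectivity. More generally, applying (b) to an arbitrary atomic relation shows that $\Phi^{-1}$ also preserves atomic formulas, so $\Phi$ is an $L$-isomorphism; the universal property of $M_\infty$ then transfers to $M$.

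For the geometric case, I would first extend (b) by induction to all positive quantifier-free formulas: conjunctions combine via directedness, disjunctions split into cases, and existentials have their witnesses lifted from $M$ to some $\mathcal{M}_k$ via (a) followed by another application of (b). For a disjunction of geometric formulas, cofinality of $S_\varphi$ together with the finite-pigeonhole principle on a directed set (a cofinal set written as a finite union has at least one cofinal piece) reduces to a single geometric formula. The negated-atomic case $\varphi = \neg\psi$ then proceeds by contradiction: if $M \models \psi(\mu_i(\overline{s}))$, (b) gives $k_0 \geq i$ with $\mathcal{M}_{k_0} \models \psi(\mu_{i,k_0}(\overline{s}))$, and morphism preservation propagates truth of $\psi$ to every $k \geq k_0$, contradicting cofinality of $S_\varphi$. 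For $\varphi = \forall \overline{v}(\psi_1 \to \exists \overline{w}\,\psi_2)$, given $\overline{m} \in M$ with $M \models \psi_1(\overline{m},\mu_i(\overline{s}))$, I would lift $\overline{m}$ to a stage $k_0 \geq i$ via (a), reflect $\psi_1$ to some $k_1 \geq k_0$ via extended (b), pick $k \in S_\varphi$ with $k \geq k_1$, apply $\varphi$ at stage $k$ to produce a witness $\overline{b}$ for $\psi_2$, and transport back to $M$ via $\mu_k$ using positive preservation.

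The main obstacle will be the $\forall \to \exists$ clause, which requires juggling three stages of the directed system---the stage of $\overline{s}$, the stage where $\overline{m}$ is lifted, and a stage in $S_\varphi$---alongside the extension of (b) to positive formulas; the interplay between that inductive extension and the cofinality argument demands careful bookkeeping to avoid circularity, and is the only step that genuinely uses all three of directedness, cofinality, and preservation of positive formulas by morphisms.
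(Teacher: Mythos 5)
Your proposal is essentially correct; note that the paper itself does not prove this proposition but only cites \cite{Mir1} and \cite{DM4}, so there is no in-text argument to compare against. Your plan is the standard one: realize $\varinjlim\mathcal{M}$ as $\bigl(\bigsqcup_i \mathcal{M}_i\bigr)/\!\sim$ with the common-stage equivalence, observe that $(a)$ and $(b)$ hold for this model, and then transport along the map $[a]\mapsto\mu_i(a)$; conditions $(a)$ and $(b)$ give exactly surjectivity, injectivity (via the atomic formula $v_1=v_2$), and reflection of atomic relations, which makes the map an $L$-isomorphism of cones. For the ``Furthermore'' clause, your two key observations --- that a cofinal subset of a directed poset written as a finite union has at least one cofinal piece, which reduces a disjunction to a single geometric formula, and that positive quantifier-free formulas are preserved upward by the connecting morphisms and reflect back to a stage by the inductive extension of $(b)$ --- are precisely what the negated-atomic case and the $\forall(\psi_1\to\exists\psi_2)$ case need. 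Your three-stage bookkeeping in the latter case (lift the tuple $\overline{m}$ via $(a)$, reflect $\psi_1$ to a stage via extended $(b)$, then climb into $S_\varphi$ by cofinality and push the $\psi_2$-witness forward by positivity) is the correct way to close the argument.

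Two small remarks on wording, neither of which affects correctness. First, you speak of extending $(b)$ to ``positive quantifier-free formulas'' but then immediately mention lifting existential witnesses; positive quantifier-free formulas have no quantifiers, and the existential case is not needed in the induction since $\psi_1,\psi_2$ in the footnote's definition are themselves quantifier-free --- the only existential appearing is the outer $\exists\overline{w}$, which you do handle separately and correctly. Second, to fully establish the ``if and only if,'' you should also note explicitly that $(a)$ and $(b)$ are transported along a cone isomorphism, so that a cone isomorphic to the explicit colimit inherits them; this is trivial (isomorphisms preserve and reflect atomic formulas, and the cone-compatibility $\Psi\circ\nu_i=\mu_i$ carries $(a)$), but it is the converse half of the equivalence and deserves a sentence.
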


\begin{proof}
The result can be found in \cite{Mir1} or \cite{DM4}.
\end{proof}

\begin{remark}
 Let $\mathcal{L}_{multi} = \{\pi, \cdot, -, 0, 1\}$ be the language of multirings where $\pi$ is ternary relation, $\cdot$ binary function, $-$ unary function and $0,1$ constants (here the multivaluated sum is a ternary relation $\pi$: $a \in b + c$ if, and only if, $\pi(a,b,c)$). Let $\mathcal{M} \colon \langle I, \leq \rangle \to \textbf{Multi}$ be a inductive system of multirings (for each $i \in I$, denote $\mathcal{M}(i) = M_i$). 
 
 Then $\varinjlim \mathcal{M}$ is a multiring because all axioms of $\ref{multiDef}$ are geometrics sentences.
\end{remark}

Let $A$ be a multiring. A prime cone of $A$ is just a subset $P \subseteq A$ such that $A^2 \subseteq P$, $P+P \subseteq P, P \cdot P \subset P, P \cup -P = A$ and $\mbox{supp}(P) \defeq P \cap -P$ (support of $P$) is a prime ideal (when $A$ is an hyperfield, $\mbox{P} = \{0\}$, the unique prime ideal).
An order of $A$ is just a morphism $\sigma \colon A \to 3$ and the set of all orders is denoted by $\mbox{sper}(A)$ and is called real spectra. These data are equivalent in the following sense:

\begin{proposition} 
 Let $A$ be a multiring. Then the following data are in bijective correspondence:
 
 \begin{enumerate}[i)]
  \item $PC = \{P \subseteq A \colon P\mbox{ is a prime cone}\}$.
  \item $PQ = \{(p,P) \colon \mbox{ where } p \in \mbox{spec}(A) \mbox{ and } P \mbox{ is a prime cone of } K_A(p) \}$.
  \item $\mbox{sper}(A) = \{ \sigma \colon A \to 3 \colon \sigma \mbox{ is a morphism} \}$.
 \end{enumerate}

\end{proposition}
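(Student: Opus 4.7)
The plan is to establish the two bijections PC $\leftrightarrow$ sper$(A)$ and PQ $\leftrightarrow$ sper$(A)$; composing them yields the three-way correspondence. The first is the substantive one; the second is essentially bookkeeping with the universal property recorded in the proof of Proposition \ref{kAp}.

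For PC $\to$ sper$(A)$, given a prime cone $P \subseteq A$ I would define $\sigma_P \colon A \to \textbf{3}$ by
\begin{equation*}
\sigma_P(a) = \begin{cases} 1 & \mbox{if } a \in P \setminus \mbox{supp}(P), \\ -1 & \mbox{if } a \in (-P) \setminus \mbox{supp}(P), \\ 0 & \mbox{if } a \in \mbox{supp}(P), \end{cases}
\end{equation*}
and the inverse sends $\sigma \in \mbox{sper}(A)$ to $P_\sigma \defeq \sigma^{-1}(\{0,1\}) \subseteq A$; that the two constructions are mutually inverse is immediate, and $\mbox{supp}(P_\sigma) = \sigma^{-1}(0)$ is a prime ideal since it is the preimage of the unique prime ideal of the hyperfield $\textbf{3}$ (by the argument already recorded for morphisms to $\mathbb{K}$, applied after composing $\sigma$ with the canonical $\textbf{3} \to \mathbb{K}$). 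Preservation of $-$, $0$, $1$ and of products by $\sigma_P$ is immediate from the defining properties of a prime cone (multiplicativity uses $P \cdot P \subseteq P$ together with the fact that $\mbox{supp}(P)$ is an ideal).

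The main obstacle is verifying that $\sigma_P$ respects the multivalued sum: if $a \in b + c$ then $\sigma_P(a) \in \sigma_P(b) + \sigma_P(c)$ in $\textbf{3}$. Up to sign-symmetry the only cases that actually demand work are (1) $\sigma_P(b) = \sigma_P(c) = 1$, where $\textbf{3}$ forces $\sigma_P(a) = 1$, and (2) $\sigma_P(b) = 0$, $\sigma_P(c) = 1$, again forcing $\sigma_P(a) = 1$. In case (1), $a \in P + P \subseteq P$, so it suffices to exclude $a \in -P$; if $a \in -P$, axiom (i) of Definition \ref{multiDef} (together with the negation lemma) yields $-b \in c + (-a) \subseteq P + P \subseteq P$, i.e.\ $b \in -P$, contradicting $\sigma_P(b) = 1$. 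Case (2) is handled by the same trick: one obtains $-c \in b + (-a) \subseteq P + P \subseteq P$, hence $c \in -P$, contradicting $\sigma_P(c) = 1$. The mixed case $\sigma_P(b) = 1, \sigma_P(c) = -1$ is automatic because $1 + (-1) = \{-1,0,1\}$ in $\textbf{3}$, and the all-zero case reduces to the fact that $\mbox{supp}(P)$ is closed under $+$.

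For PQ $\leftrightarrow$ sper$(A)$, given $\sigma \in \mbox{sper}(A)$ set $p \defeq \sigma^{-1}(0) \in \mbox{spec}(A)$. Since $\sigma(p) = \{0\}$ and $\sigma(A \setminus p) \subseteq \{1,-1\} = \textbf{3}^{\times}$, the universal property recorded in Proposition \ref{kAp} produces a unique factorization $\sigma = \tilde\sigma \circ i$ through the canonical $i \colon A \to K_A(p)$, with $\tilde\sigma \colon K_A(p) \to \textbf{3}$; the already-constructed bijection applied inside $K_A(p)$ then turns $\tilde\sigma$ into a prime cone $P$ of $K_A(p)$, yielding $(p,P) \in $ PQ. Conversely, a pair $(p,P)$ produces $\tilde\sigma \colon K_A(p) \to \textbf{3}$ and hence $\sigma \defeq \tilde\sigma \circ i \in \mbox{sper}(A)$ with $\sigma^{-1}(0) = p$; the uniqueness clause of Proposition \ref{kAp} makes these constructions mutually inverse. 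Once the case analysis above for $\sigma_P$ is in place, every remaining step is a routine application of the universal properties already established.
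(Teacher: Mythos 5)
Your proposal is correct and follows essentially the same strategy as the paper's own proof: route both bijections through $\mbox{sper}(A)$, with $P \mapsto \sigma_P$ defined by the identical case split (noting $P \setminus \mbox{supp}(P) = P \setminus -P$) and $\sigma \mapsto \sigma^{-1}(\{0,1\})$ in the other direction. The only presentational difference is that you supply the additivity case analysis that the paper leaves to the reader, and you derive the PQ $\leftrightarrow$ sper$(A)$ correspondence from the universal property of $A \to K_A(p)$ rather than writing down the explicit formula, but these are the same argument in slightly different clothing.
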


\begin{proof}
The bijections are described in terms of $\mbox{sper}(A)$.
 
 $i) \Leftrightarrow iii)$: Given a prime cone $P \subseteq A$, consider the morphism $\sigma \colon A \to 3$ given by
 
 \begin{equation}
    \sigma(a) =
    \begin{cases*}
      1 & if $a \in P \setminus -P$ \\
      0 & if $a \in P \cap - P$ \\
      -1 & if $a \in -P \setminus P$.
    \end{cases*}
  \end{equation}\\
 
 Reciprocally, if $\sigma \in \mbox{sper}(A)$, then $\sigma^{-1} (\{0,1\}) \subseteq A$ is a prime cone.\\
 
 $ii) \Leftrightarrow iii)$: Given a prime ideal $p \in \mbox{spec}(A)$ and a prime cone $P \subseteq K_A(p)$, consider the morphism $\sigma \colon A \to 3$

 \begin{equation}
    \sigma(a) =
    \begin{cases*}
      1 & if $a \notin p$ and $\overline{a} \in P$ \\
      0 & if $a \in p$ \\
      -1 & if $a \notin p$ and $-\overline{a} \in P$.
    \end{cases*}
  \end{equation}\\
 
 Reciprocally, if $\sigma \colon A \to 3$ is a morphism, then $\sigma^{-1}(0)$ and the image of $\sigma^{-1}(\{0,1\})$ by the canonical composition $A \to A/p \to K_A(p)$ gives a prime cone in $K_A(p)$.
\end{proof}

Let $A$ be a multiring. Given $a_1, \ldots, a_n \in A$, define $U(a_1, \ldots, a_n) = \{\sigma \in \mbox{sper}(R) \colon \sigma(a_i) = 1 \mbox{ for all } i=1, \ldots, n\}$. The multiring $A$ is called semi-real if $-1 \notin \sum A^2$ and
an ideal $\alpha \subseteq A$ is real if given $a_1, \ldots, a_n \in A$ such that $a_1^2 + \cdots + a_n^2 \cap A \neq \emptyset$, then $a_i \in A$ for all $i=1, \ldots, n$.

\begin{proposition} \label{basicReal}
 Let $A$ be a multiring.
 
 \begin{enumerate}[i)]
  \item $\mbox{sper}(A) \neq \emptyset$ if, and only if, $A$ is semi-real.
  \item The set $\mbox{sper}(A)$ endowed with the topology generated by the sets $U(a_1, \ldots, a_n)$, $a_i \in A$, is a spectral space. 
  \item Let $p$ a prime ideal of $A$. Then the following are equivalent:
  \begin{enumerate}[a)]
  \item $p$ is real.
  \item The multfield $K_A(p)$ is semi-real.
  \item $p$ is the support of some order.
  \end{enumerate}
 \end{enumerate}

\end{proposition}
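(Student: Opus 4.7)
For part (i), the forward direction is immediate: given $\sigma \in \mbox{sper}(A)$ and $-1 \in a_1^2 + \cdots + a_n^2$, applying $\sigma$ forces $-1 \in \sigma(a_1)^2 + \cdots + \sigma(a_n)^2$, but each $\sigma(a_i)^2 \in \{0,1\}$ and the sum table in $\textbf{3}$ ($0+x=\{x\}$, $1+1=\{1\}$) keeps every such multivalued sum inside $\{0,1\}$, a contradiction. For the converse I would adapt the Artin--Schreier argument: call a subset $T \subseteq A$ a \emph{preorder} if it is closed under the multivalued sum and the product, contains every square, and misses $-1$. The semi-real hypothesis says $\sum A^2$ is a preorder, so Zorn yields a maximal one $P$. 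The key step is to show that any maximal preorder is a prime cone, i.e.\ satisfies $P \cup (-P) = A$ with $P \cap (-P)$ prime. If $a \notin P$, then the natural enlargement $P + Pa$ is again a preorder except possibly for containing $-1$, so maximality gives $-1 \in P + Pa$; if also $-a \notin P$, one gets $-1 \in P + P(-a)$, and combining these two memberships via associativity and half-distributivity (Definition \ref{multiDef}, iii and vi) produces $-1 \in P$, a contradiction. Primality of the support is then standard.

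For part (ii), my approach is to embed $\mbox{sper}(A) \hookrightarrow \textbf{3}^A$ by $\sigma \mapsto (\sigma(a))_{a \in A}$, putting on $\textbf{3}$ the spectral topology generated by $\{1\}$ and $\{-1\}$ as opens (so that $0$ is the generic point and $\textbf{3}$ is itself a finite spectral space). Then $\textbf{3}^A$ is spectral, each $U(a)$ is the preimage of $\{1\}$ under the coordinate projection and hence a quasi-compact open, and finite intersections $U(a_1,\ldots,a_n)$ form a basis closed under $\cap$. The image of $\mbox{sper}(A)$ in $\textbf{3}^A$ is cut out by the (positive, quantifier-free) morphism axioms of Definition \ref{multiDef}, so it is a closed subset and inherits the spectral structure.

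For part (iii), I would run (b)$\Rightarrow$(c), (c)$\Rightarrow$(b), and (a)$\Leftrightarrow$(b). For (b)$\Rightarrow$(c), apply (i) to obtain $\tau \in \mbox{sper}(K_A(p))$ and compose with $A \to A/p \to K_A(p)$; the composite has support exactly $p$. For (c)$\Rightarrow$(b), an order $\sigma$ with support $p$ factors through $A/p$ by Proposition \ref{idealq} and then through $K_A(p)$ by Proposition \ref{localization} (since elements outside $p$ go to units of $\textbf{3}$), yielding an order on $K_A(p)$ and hence semi-reality by (i). For (a)$\Rightarrow$(b), if $-1 \in \sum (\overline{a_i}/\overline{s})^2$ in $K_A(p)$, clearing denominators and pulling back to $A$ via $\ref{idealq}$, $\ref{78})$ produces an element of $p$ lying in $s^2 + \sum a_i^2$; reality of $p$ forces $s \in p$, contradicting $s \notin p$. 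For (b)$\Rightarrow$(a), if $x \in \sum a_i^2 \cap p$ but some $a_j \notin p$, then $0 \in \sum \overline{a_i}^2$ in $K_A(p)$ with $\overline{a_j}^2$ invertible, yielding $-1 \in \sum_{i \neq j}(\overline{a_i}/\overline{a_j})^2$ and contradicting semi-reality.

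The main obstacle I expect is the maximal-preorder dichotomy in (i). In a ring one combines the equalities $-1 = q_1 + t_1 a$ and $-1 = q_2 - t_2 a$ by elementary algebra; in a multiring one only has memberships $-1 \in q_1 + t_1 a$ and $-1 \in q_2 + t_2(-a)$, and extracting a single witness showing $-1 \in P$ requires repeated applications of associativity to rearrange nested multivalued sums, together with half-distributivity to absorb the $a$-terms after cross-multiplying the two relations. Once this step is in hand, the rest of the proof is routine translation of the ring-theoretic facts to the multiring setting, with $\ref{idealq}$, $\ref{78})$ consistently playing the role of lifting sum relations from $A/p$ back up to $A$.
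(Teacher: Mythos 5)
The paper simply cites Marshall [Mar3] (Propositions 5.1, 6.1, Corollary 5.3) for this result, so there is no internal proof to compare against; what follows is an assessment of your argument on its own terms. Part (iii) is correct and the factorizations through $A/p$ and $K_A(p)$ via Propositions \ref{idealq} and \ref{localization} are exactly the right moves. Parts (i) and (ii), however, each contain a genuine gap.

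In part (i), the claim that ``the natural enlargement $P + Pa$ is again a preorder except possibly for containing $-1$'' is false in a general multiring. Closure under the multivalued sum would require $Pa + Pa \subseteq Pa$, but half-distributivity gives only $(P+P)a \subseteq Pa + Pa$, not the reverse; an element $z \in p_2 a + q_2 a$ need not be of the form $wa$ with $w \in p_2 + q_2$, so $(P+Pa)+(P+Pa)$ can spill outside $P+Pa$ (and the same problem afflicts closure under products). What maximality of $P$ actually yields is $-1 \in T$, where $T$ is the preorder \emph{generated} by $P \cup \{a\}$, i.e.\ the union of all iterated sums $p_1 + \cdots + p_k + q_1 a + \cdots + q_l a$ with $p_i, q_j \in P$. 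From there the cross-multiplication argument does go through: one extracts single witnesses $u \in p_1 + \cdots + p_k \subseteq P$ and $t \in q_1 a + \cdots + q_l a$ with $-1 \in u + t$, similarly $u', t'$ for $-a$, and repeated half-distributivity shows that $tt'$ lies both in $-P$ and in $1 + P$, whence $-1 \in P$, a contradiction. You correctly flag this as the ``main obstacle,'' but the intermediate assertion about $P + Pa$ needs to be replaced, not merely the bookkeeping filled in.

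In part (ii), $\mbox{sper}(A)$ is \emph{not} a closed subspace of $\textbf{3}^A$ when $\textbf{3}$ carries the spectral topology you describe. The morphism conditions are not closed conditions in that topology: the graph of multiplication $\{(x,y,xy)\} \subseteq \textbf{3}^3$ fails to be closed (the point $(1,0,1)$ lies in its closure, since the only open neighbourhood of $0$ is all of $\textbf{3}$). Concretely, $\mbox{sper}(\mathbb{Z}) = \{\mathrm{sgn}\}$, yet the zero function $\tau \equiv 0$ lies in the closure of $\{\mathrm{sgn}\}$ in $\textbf{3}^{\mathbb{Z}}$ (every open set containing $\tau$ is all of $\textbf{3}^{\mathbb{Z}}$) and is not a morphism. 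The fix, which is close to what you wrote, is to put the \emph{discrete} (patch) topology on $\textbf{3}$: then $\textbf{3}^A$ is a Boolean space, the morphism axioms do cut out a closed subset, and $\mbox{sper}(A)$ is therefore a pro-constructible subset of the spectral space $\textbf{3}^A$ (with its spectral product topology). Since pro-constructible subspaces of spectral spaces are again spectral, this yields the claim. As written, though, the step ``closed subset of a spectral space'' does not apply.
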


\begin{proof}
 The proofs are in Propositions 5.1, 6.1 and Corollary 5.3 of \cite{Mar3}.
\end{proof}

\begin{definition}
 Let $A$ be a multiring. 
 
 \begin{itemize}
     \item A subset $T \subseteq A$ is called a preorder if $A^2 \subseteq T, T \cdot T \subseteq T$ and $T + T \subseteq T$ and it is proper if $-1 \notin T$.
     
     \item If $T$ is a proper preorder of $A$, the pair $(A,T)$ is called a p-multiring.
     
     \item We denote by $\mbox{sper}_T(A) = \{\sigma \in \mbox{sper}(A) \colon \sigma(T) \subseteq \{0,1\}\}$ the orders that preserve $T$.
 \end{itemize}
 
 If $(A,T)$ and $(B,P)$ are p-multirings, a multiring morphism $f \colon A \to B$ is a morphism of p-multirings if $f(T) \subseteq P$. We denote by $\textbf{pMult}$ the category of all p-multirings and its morphisms.
\end{definition}

Let $A$ be multiring and $T \subseteq A$ a proper preorder (note that $A$ is necessarily semi-real). Consider the natural map $\hat{ } \colon A \to 3^{sper_T(A)}$ given by $\hat{a} (\sigma) = \sigma (a)$. Let $Q_T(A) = \{\hat{a} \colon \mbox{for all } a \in A\}$.
This set has a product given by $\hat{a} \cdot \hat{b} = \hat{ab}$ and a sum given by $\hat{a} \in \hat{b} + \hat{c}$ if exists $a',b',c' \in A$ with $\hat{a'} = \hat{a}, 
\hat{b'} = \hat{b}, \hat{c'} = \hat{c}$ and $a' \in b' + c'$. If \textcolor{red}{$T = 1 + \sum A^2$} we denote $Q_T(A)$ just by $Q(A)$. 

\begin{fact} \label{Q(A)Mult}
 Let $(A,T)$ be a p-multiring and $\pi \colon A \to Q_T(A)$ the natural projection. Then $Q_T(A)$ is a multiring and given $a,b,c \in Q_T(A)$, $\pi(a) \in \pi(b) + \pi(c)$ if, and only if, for all $\sigma \in \mbox{sper}_T(A)$ we have $\sigma(a) \in \sigma(b) + \sigma(c)$. 
\end{fact}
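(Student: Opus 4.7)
For the multiring structure on $Q_T(A)$, the plan is to first check that the operations are well-defined and then transfer each axiom of Definition \ref{multiDef} from $A$ by lifting to representatives. The product $\hat{a} \cdot \hat{b} := \hat{ab}$ is well-defined because if $\hat{a} = \hat{a_1}$ and $\hat{b} = \hat{b_1}$, then $\sigma(ab) = \sigma(a)\sigma(b) = \sigma(a_1)\sigma(b_1) = \sigma(a_1 b_1)$ for every $\sigma \in \mbox{sper}_T(A)$, whence $\hat{ab} = \hat{a_1 b_1}$; the constants $\hat{0}, \hat{1}$ and the negation $-\hat{a} := \hat{-a}$ are handled similarly. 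For the associativity axiom (iii), given $\hat{x} \in \hat{g} + \hat{c}$ and $\hat{g} \in \hat{a} + \hat{b}$, the definition of the sum supplies elements in $A$ satisfying the corresponding sum relations, and applying associativity in $A$ yields $h \in A$ with $h \in b' + c'$ and $x' \in a' + h$; then $\hat{h}$ witnesses associativity in $Q_T(A)$. Axioms (i), (ii), (iv), (v), (vi) transfer in the same style.

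The forward direction of the equivalence is immediate from the definition: if $\pi(a) \in \pi(b) + \pi(c)$, there exist $a', b', c' \in A$ with $\hat{a'} = \hat{a}$, $\hat{b'} = \hat{b}$, $\hat{c'} = \hat{c}$ and $a' \in b' + c'$ in $A$. Since every $\sigma \in \mbox{sper}_T(A)$ is a multiring morphism, $\sigma(a') \in \sigma(b') + \sigma(c')$, and the equalities $\sigma(a) = \hat{a}(\sigma) = \hat{a'}(\sigma) = \sigma(a')$ (and analogously for $b, c$) transfer this to the desired pointwise condition.

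The main obstacle is the backward direction: assuming $\sigma(a) \in \sigma(b) + \sigma(c)$ for every $\sigma \in \mbox{sper}_T(A)$, one must exhibit $a', b', c' \in A$ with the correct hats and $a' \in b' + c'$ in $A$. This sign hypothesis does not by itself produce such witnesses, and is the core content of the Fact. It amounts to saying that the inclusion $Q_T(A) \hookrightarrow 3^{\mbox{sper}_T(A)}$ identifies the sum on $Q_T(A)$ (defined via $A$-representatives) with the pointwise sum induced from $3^{\mbox{sper}_T(A)}$ --- a representation-theoretic statement in the spirit of a Positivstellensatz. My approach would be to invoke the analogous result in Marshall \cite{Mar3}, where this property is established as part of the construction of the $Q_T$ functor; a direct argument could proceed by a separation / compactness argument on the spectral space $\mbox{sper}_T(A)$, exploiting that elements of $T$ are sent into $\{0, 1\}$ by every order, to adjust candidate representatives into ones with the correct hats.
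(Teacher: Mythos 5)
The paper's own proof of this Fact is a single-line citation to Proposition~7.3 of \cite{Mar3}, so your attempt to reconstruct the argument is more ambitious than what the paper records. Two observations about your reconstruction.

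The serious issue is the associativity step. From $\hat{x} \in \hat{g} + \hat{c}$ and $\hat{g} \in \hat{a} + \hat{b}$ the definition of the sum gives you two \emph{independent} sets of representatives: $x', g', c' \in A$ with $\hat{g'} = \hat{g}$ and $x' \in g' + c'$, and separately $g'', a'', b'' \in A$ with $\hat{g''} = \hat{g}$ and $g'' \in a'' + b''$. You only know $\hat{g'} = \hat{g''}$, not $g' = g''$, so you cannot chain these two relations through associativity in $A$ to produce the intermediate $h$. This is precisely the kind of difficulty that makes the statement nontrivial: the equivalence relation $\hat{a} = \hat{a'}$ is not a congruence for the sum in any obvious representative-by-representative way. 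The clean route (and, as far as I can tell, the route Marshall takes) is to prove the hard direction of the equivalence \emph{first} — that the relation defined via $A$-representatives coincides with the pointwise relation inherited from the product multiring $3^{\mathrm{sper}_T(A)}$ — and then the multiring axioms for $Q_T(A)$, including associativity, follow at once because $Q_T(A)$ is then a submultiring of a product of copies of $3$. Your proposed order of argument (axioms first, equivalence second) cannot work without the equivalence already in hand, so there is a circularity/gap.

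On the remaining parts: your forward direction is fine, and your well-definedness checks for the product and negation are correct. But you explicitly defer the backward direction — which is the Positivstellensatz-type core of the Fact, requiring an honest use of the preorder $T$ and of the separation properties of $\mathrm{sper}_T(A)$ — to Marshall's paper. Since that is exactly what the paper does, and since the only thing you attempt to prove beyond that (the multiring axioms) contains the gap above, the proposal as written does not in fact close any of the work the citation is carrying.
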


\begin{proof}
 Proposition 7.3 of \cite{Mar3}.
\end{proof}

\begin{fact} \label{piIso}
 Let $A$ be a semi-real multiring. Then the map $A \to Q(A)$ is an isomorphism if, and only if, for all $a,b \in A$
 
\begin{enumerate}[i)]
 \item $a^3 = a$.
 \item $a + b^2a = \{a\}$
 \item $a^2 + b^2$ has an unique element.
\end{enumerate}

\end{fact}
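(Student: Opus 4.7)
The plan is to prove the two directions with opposite strategies: necessity by pointwise verification inside $\textbf{3}^{\mbox{sper}_T(A)}$, and sufficiency by reducing everything to a single order-separation statement.

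For necessity, assume $\pi \colon A \to Q(A)$ is an isomorphism. By Fact \ref{Q(A)Mult}, equality and the sum relation in $Q(A)$ are determined pointwise by evaluation at orders $\sigma \in \mbox{sper}_T(A)$, so to verify identities (i)--(iii) for $A$ it suffices to verify them in $\textbf{3}$. Axiom (i) holds there because $(\pm 1)^3 = \pm 1$ and $0^3 = 0$; axiom (ii) holds because $b^2 \in \{0,1\}$, so $a + b^2 a$ is either $a + 0 = \{a\}$ or $a + a = \{a\}$ (using $1+1 = \{1\}$ and $(-1)+(-1) = \{-1\}$); and axiom (iii) holds because $a^2, b^2 \in \{0,1\}$ and the sums $0+0,\ 0+1,\ 1+1$ in $\textbf{3}$ are all singletons. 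Transferring through the isomorphism $A \cong Q(A)$ gives the identities in $A$.

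For sufficiency, assume $A$ is semi-real and satisfies (i)--(iii). Surjectivity of $\pi$ is built into the definition of $Q(A)$. The key observation is that once $\pi$ is shown to be injective, it is automatically a full isomorphism: the sum in $Q(A)$ is defined by $\hat a \in \hat b + \hat c$ iff there exist $a', b', c' \in A$ with $\hat{a'} = \hat a$, $\hat{b'} = \hat b$, $\hat{c'} = \hat c$ and $a' \in b' + c'$, so injectivity forces $a' = a$, $b' = b$, $c' = c$, and the sum in $Q(A)$ pulls back exactly to the sum in $A$. Hence the whole direction reduces to proving: if $\sigma(a) = \sigma(b)$ for all $\sigma \in \mbox{sper}_T(A)$, then $a = b$.

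This separation statement is the main obstacle and plays the role of a Positivstellensatz for the axiomatic class (i)--(iii). The strategy is to exploit the three axioms to impose Boolean-like combinatorics on $A^2 = \{a^2 : a \in A\}$: axiom (i) makes each $a^2$ multiplicatively idempotent, axiom (iii) equips $A^2$ with a single-valued join via the unique element of $a^2 + b^2$, and axiom (ii) is an absorption identity of the form $a + b^2 a = \{a\}$. Prime cones of $A$ can then be identified, up to a sign datum on each prime, with ultrafilter-like objects on $A^2$, and a Zorn's-lemma extension argument along the lines of the Prime Ideal Theorem $\ref{PIT}$ produces, for any $a \neq b$, an order $\sigma \in \mbox{sper}_T(A)$ separating them. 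Once the separation is in hand, the reduction of the previous paragraph closes the proof.
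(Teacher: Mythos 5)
The necessity direction of your proposal is correct and essentially mechanical: by Fact \ref{Q(A)Mult} the sum relation in $Q(A)$ is computed coordinatewise in $\textbf{3}^{\mathrm{sper}_T(A)}$, each of (i)--(iii) is verified in $\textbf{3}$, and the isomorphism $\pi$ transports them back to $A$. Your reduction of the sufficiency direction to injectivity of $\pi$ is also correct and cleanly stated: since $\pi$ is surjective by construction and the sum in $Q(A)$ is defined via representatives, injectivity forces the representatives to be unique and makes $\pi$ an isomorphism on the nose.

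However, there is a genuine gap: the injectivity of $\pi$ --- equivalently, the separation statement that $\sigma(a)=\sigma(b)$ for all $\sigma\in\mathrm{sper}(A)$ implies $a=b$ --- is the \emph{entire} mathematical content of the sufficiency direction, and you do not prove it. Your final paragraph only gestures at a strategy (``a Zorn's-lemma extension argument along the lines of the Prime Ideal Theorem''). This is not enough. Theorem \ref{PIT} produces a prime \emph{ideal} disjoint from a multiplicative set; producing a separating \emph{order} is a different and harder task. One must run Zorn on proper preorders, and the crux is verifying that a maximal proper preorder $P$ is a prime cone: one needs $P\cup -P = A$ and that $\mathrm{supp}(P)$ is a prime ideal, and these verifications are exactly where the identities (i)--(iii) enter in a nontrivial way (for example, that $a\notin P$ implies $-a\in P$ by considering $P+aP$ and using the absorption identity (ii) and the idempotency forced by (i) to keep $-1$ out). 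None of this is carried out, and stating that the axioms ``impose Boolean-like combinatorics on $A^2$'' does not substitute for the argument. For what it is worth, the paper itself does not give a proof of this Fact --- it defers entirely to Proposition 7.5 of \cite{Mar3} --- so there is no in-paper argument to compare against; but the gap in your sufficiency direction stands regardless.
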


\begin{proof}
 Proposition 7.5 of \cite{Mar3}.
\end{proof}

\begin{definition} \label{rrmdefinition}
 A multiring $A$ is called real reduced multiring (RRM) if it is semi-real and satisfies the three conditions of above theorem. The category of all real reduced multirings with multiring morphism is denoted by $\textbf{RRM}$.
\end{definition}

\begin{remark}
 \begin{itemize}
     \item 
If $(A,T)$ is a p-multiring, then by Fact $\ref{Q(A)Mult}$ the multring $Q_T(A)$ is a real reduced multiring.
     
     \item 
Let $f \colon (A,T) \to (B,P)$ a p-morphism. Then we have an induced map $\mbox{sper}_P(B) \to \mbox{sper}_T(A)$ given composition with $f$. 
 \end{itemize}
\end{remark}

\begin{corollary} \label{sepTheorem}
Let $A$ be a real reduced multiring and $a,b,c \in A$. Then $a \in b + c$ if, and only if, for all $\sigma \in \mbox{sper}(A)$ we have $\sigma(a) \in \sigma(b) + \sigma(c)$. In particular, if $\sigma(a) = \sigma(b)$ for all $\sigma \in \mbox{sper}(A)$, then $a = b$.
\end{corollary}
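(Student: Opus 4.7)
The plan is to deduce this directly from the combination of Fact \ref{piIso} (which says $A \to Q(A)$ is an isomorphism when $A$ is RRM) and Fact \ref{Q(A)Mult} (which describes the sum in $Q_T(A)$ via the orders in $\mbox{sper}_T(A)$). One direction of the biconditional is immediate from the definition of a multiring morphism: if $a \in b+c$, then $\sigma(a) \in \sigma(b) + \sigma(c)$ in $\textbf{3}$ for every $\sigma \in \mbox{sper}(A)$. So the content is in the converse.

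For the converse, I would first observe that with $T = 1 + \sum A^2$ one has $\mbox{sper}_T(A) = \mbox{sper}(A)$. Indeed, any $\sigma \colon A \to \textbf{3}$ is a multiring morphism, so $\sigma(1) = 1$ and $\sigma(a^2) = \sigma(a)^2 \in \{0,1\}$; since in $\textbf{3}$ the set $1 + \{0,1\} + \cdots + \{0,1\}$ is contained in $\{1\}$, every element of $T$ is sent by $\sigma$ into $\{0,1\}$. Thus Fact \ref{Q(A)Mult} gives, for the canonical projection $\pi \colon A \to Q(A)$, the equivalence
\[
\pi(a) \in \pi(b) + \pi(c) \iff \sigma(a) \in \sigma(b) + \sigma(c) \text{ for all } \sigma \in \mbox{sper}(A).
\]
Now since $A$ is RRM, Fact \ref{piIso} says $\pi$ is an isomorphism of multirings, so $\pi(a) \in \pi(b) + \pi(c)$ if and only if $a \in b + c$ in $A$. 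Combining these two equivalences yields the stated biconditional.

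For the ``in particular'' clause, I would apply the main equivalence with $c = 0$. If $\sigma(a) = \sigma(b)$ for every $\sigma \in \mbox{sper}(A)$, then $\sigma(a) \in \{\sigma(b)\} = \sigma(b) + 0 = \sigma(b) + \sigma(0)$ for every such $\sigma$, so by the equivalence just proved, $a \in b + 0$, and axiom ii) of Definition \ref{multiDef} then gives $a = b$.

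There is no real obstacle beyond bookkeeping; the only point that deserves a sentence of care is the verification $\mbox{sper}_T(A) = \mbox{sper}(A)$, which is needed to invoke Fact \ref{Q(A)Mult} in the form used above. Everything else is a straight chain of applications of the cited facts together with the fact that an isomorphism of multirings preserves and reflects the sum relation.
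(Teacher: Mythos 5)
Your proof is correct and follows exactly the route indicated by the paper, which states the corollary is a "direct consequence of Fact \ref{Q(A)Mult} and Fact \ref{piIso}"; you have merely spelled out the bookkeeping (the observation that $\mbox{sper}_T(A) = \mbox{sper}(A)$ for $T = 1 + \sum A^2$, and the use of axiom ii) for the ``in particular'' clause), all of which is accurate.
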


\begin{proof}
Direct consequence of Fact $\ref{Q(A)Mult}$ and Fact $\ref{piIso}$.
\end{proof}

\begin{remark}
Let $A$ be a multiring that satisfies the conditions of Fact $\ref{piIso}$. Given $a,b \in A$ we have $c \in a^2 + b^2$ satisfying $c^2 = c$ because since $a^2b^2 + a^2 = \{a^2\}$ and $a^2b^2 + b^2 = \{b^2\}$, $c^2 \in (a^2 + b^2) (a^2 + b^2) \subseteq a^2 + a^2b^2 + b^2 + a^2b^2 = a^2 + b^2$ and thus $c^2 = c$. Therefore $A$ semi-real if, and only if, $1 \neq 0$ (if $-1 \in \sum A^2$, then $-1 = x^2$ for some $x \in A$ and so $0 \in 1 + x^2 = \{1\}$). Furthermore, if $A$ is RRM, since $\sum A^2 = A^2 = \mbox{Id}(A)$, then $(A, \mbox{Id}(A))$ is a p-multiring.
\end{remark}

\begin{corollary} \label{hyperfieldRR}
 Let $F$ be a hyperfield. Then $F$ is a real reduced hyperfield if, and only if for all $a \in \dot{F}$
 
 \begin{enumerate}[.]
  \item $1 \neq 0$.
 
  \item $a^2 = 1$, whenever $a \neq 0$.
  
  \item $1 + 1 = \{1\}$.
 \end{enumerate}

\end{corollary}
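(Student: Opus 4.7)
The plan is to combine the characterization of real reduced multirings from Fact \ref{piIso} with the two special features of hyperfields: every nonzero element is invertible, and the hyperring axiom provides a converse to half-distributivity (so a sum of the form $a \cdot X + a \cdot Y$ can be "divided" by a nonzero $a$). In a hyperfield, these force each of the three conditions of Fact \ref{piIso} to collapse to something much simpler, and conversely the three short conditions in the statement are strong enough to recover everything, including semi-reality.

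For the forward direction, suppose $F$ is a real reduced hyperfield. Then $1 \neq 0$ comes for free from being a hyperfield. The identity $a^{3} = a$, for $a \neq 0$, can be rewritten as $a \cdot a^{2} = a \cdot 1$; multiplying by $a^{-1}$ gives $a^{2} = 1$. Finally, taking $a = b = 1$ in $a + b^{2} a = \{a\}$ yields $1 + 1 = \{1\}$.

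For the reverse direction, I would verify the three conditions of Fact \ref{piIso} together with semi-reality. Condition (i) is immediate, since $a^{3} = a \cdot a^{2} = a$ when $a \neq 0$, and trivially for $a = 0$. For condition (ii) the only substantive case is $a, b \neq 0$, reducing to $a + a = \{a\}$; the inclusion $a + a \subseteq \{a\}$ follows from $1 + 1 = \{1\}$ via the hyperring converse of half-distributivity (every $x \in a \cdot 1 + a \cdot 1$ has the form $a y$ with $y \in 1 + 1 = \{1\}$), and the reverse inclusion uses $1 \in 1+1$ together with ordinary half-distributivity. Condition (iii) follows by the same case split, since $a^{2} + b^{2}$ equals $\{0\}$, $\{1\}$, or $1 + 1 = \{1\}$ depending on which of $a, b$ vanish.

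The main step that deserves attention is semi-reality, which at first glance is not one of the three listed axioms. I would show by a short induction on $n$ that every sum $a_{1}^{2} + \cdots + a_{n}^{2}$ is contained in $\{0, 1\}$, using that each $a_{i}^{2} \in \{0, 1\}$ and that adding $0$ or $1$ to an element of $\{0, 1\}$ stays inside $\{0, 1\}$ (here $0 + x = \{x\}$ by axiom (ii) of a multiring, and $1 + 1 = \{1\}$ by hypothesis). Then $-1 \in \sum F^{2}$ would force $-1 = 0$ (giving $1 = 0$) or $-1 = 1$ (giving $0 \in 1 + (-1) = 1 + 1 = \{1\}$, hence $0 = 1$), both contradicting $1 \neq 0$. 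This is the only place where the three listed conditions have to do a bit of non-trivial work together, but it is still routine.
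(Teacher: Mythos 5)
Your proposal is correct and follows essentially the same route as the paper: reduce to the three conditions of Fact~\ref{piIso} together with semi-reality, then verify each by the case split on whether the relevant elements vanish, using $a^{2}=1$ for $a\neq 0$ and $1+1=\{1\}$. The only difference is one of detail — the paper asserts semi-reality in passing without argument, whereas you supply the short induction showing $\sum F^{2}\subseteq\{0,1\}$; this fills a genuine (if minor) gap in the paper's terse proof.
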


\begin{proof}
 If $F$ is a real reduced hyperfield, then it is imediate that $F$ satisfies the above axioms. Reciprocally, $F$ is semi-real and $a^3 = a$ for all $a \in F$. Furthermore, given $x \in a + b^2 a$, if $b = 0$, then $x = a$ and if $b \neq 0$, then $x \in a + a = \{a\}$. It is also easy to see that $a^2 + b^2$ has just a unique element $0$ or $1$.
\end{proof}

\begin{theorem} \label{uniProMRR}

\begin{enumerate}[i)]
 \item Let $f \colon (A,T) \to (B,P)$ a morphism of p-multirings. Then exist unique map $Q(f) \colon Q_T(A) \to Q_P(B)$ such that
 
 \[\begin{tikzcd}
        (A,T) \arrow[r, "f"] \arrow[d, ""] & (B,P) \arrow[d, ""] \\
        Q_T(A) \arrow[r, "Q(f)"] & Q_P(B) \\
\end{tikzcd}\]
 
 is a commutative diagram. Furthermore, $Q$ defines a functor from category $\textbf{pMulti}$ to $\textbf{RRM}$.
 
 \item

 Let $(A,T)$ be a p-multiring and let $f \colon (A,T) \to (R, \sum R^2)$ be a morphism of multirings where $R$ is RRM. Then exist unique $\overline{f} \colon Q_T(A) \to R$ morphism such that
 the diagram

 \[\begin{tikzcd}
        A \arrow[r, ""] \arrow[dr, "f"] & Q_T(A) \arrow[d, "\overline{f}"] \\
         & R \\
\end{tikzcd}\]
 
 is commutative. In other words, the functor $Q \colon \textbf{pMulti} \to \textbf{RRM}$ is left-adjoint to the inclusion functor $i \colon \textbf{RRM} \to \textbf{pMulti}$ given by $i(R) = (R, \sum R^2)$.
 \end{enumerate}
 
\end{theorem}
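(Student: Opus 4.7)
The plan is to build $Q(f)$ explicitly using the contravariant action on real spectra, and then bootstrap part (ii) from part (i) by invoking Fact \ref{piIso} to identify an RRM with its own $Q$-image.

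For part (i), given a p-multiring morphism $f \colon (A,T) \to (B,P)$, I first note that precomposition with $f$ induces a map $f^{*} \colon \mbox{sper}_P(B) \to \mbox{sper}_T(A)$, $\tau \mapsto \tau \circ f$; this is well-defined because $f(T) \subseteq P$ forces $(\tau \circ f)(T) \subseteq \tau(P) \subseteq \{0,1\}$. I then define $Q(f) \colon Q_T(A) \to Q_P(B)$ by $Q(f)(\hat a) \defeq \widehat{f(a)}$. Well-definedness on equivalence classes is immediate: if $\widehat{a_1} = \widehat{a_2}$, then for every $\tau \in \mbox{sper}_P(B)$, $\tau(f(a_1)) = (\tau \circ f)(a_1) = (\tau \circ f)(a_2) = \tau(f(a_2))$, so $\widehat{f(a_1)} = \widehat{f(a_2)}$.

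Next I verify that $Q(f)$ is a multiring morphism. Preservation of $0$, $1$, negation, and product is immediate from the formula $Q(f)(\hat a) = \widehat{f(a)}$ and the corresponding properties of $f$. Sum-preservation is the main computation and uses Fact \ref{Q(A)Mult} in both directions: suppose $\hat a \in \hat b + \hat c$ in $Q_T(A)$. By Fact \ref{Q(A)Mult} applied to $(A,T)$, $\sigma(a) \in \sigma(b) + \sigma(c)$ for every $\sigma \in \mbox{sper}_T(A)$; specializing to $\sigma = \tau \circ f$ for arbitrary $\tau \in \mbox{sper}_P(B)$ yields $\tau(f(a)) \in \tau(f(b)) + \tau(f(c))$ for every $\tau$; so by the converse direction of Fact \ref{Q(A)Mult} applied to $(B,P)$, $\widehat{f(a)} \in \widehat{f(b)} + \widehat{f(c)}$, i.e.\ $Q(f)(\hat a) \in Q(f)(\hat b) + Q(f)(\hat c)$. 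Commutativity of the square is by construction, uniqueness of $Q(f)$ follows from surjectivity of $\pi_A \colon A \twoheadrightarrow Q_T(A)$, and functoriality ($Q(\mathrm{id}) = \mathrm{id}$, $Q(g \circ f) = Q(g) \circ Q(f)$) reads off the formula. Since $Q_T(A)$ is already known to be RRM (remark after Fact \ref{piIso}), $Q$ lands in $\mathbf{RRM}$.

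For part (ii), the argument reduces to part (i) via Fact \ref{piIso}: when $R$ is RRM, the projection $\pi_R \colon R \to Q_{\sum R^2}(R) = Q(R)$ is an isomorphism. Given a p-morphism $f \colon (A,T) \to (R, \sum R^2)$, part (i) furnishes $Q(f) \colon Q_T(A) \to Q_{\sum R^2}(R)$, and I set $\overline{f} \defeq \pi_R^{-1} \circ Q(f)$. The triangle commutes because
\[
\overline{f} \circ \pi_A \;=\; \pi_R^{-1} \circ Q(f) \circ \pi_A \;=\; \pi_R^{-1} \circ \pi_R \circ f \;=\; f,
\]
and uniqueness of $\overline{f}$ again follows from surjectivity of $\pi_A$. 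This is precisely the unit–counit data exhibiting $Q$ as left adjoint to the inclusion $i \colon \mathbf{RRM} \to \mathbf{pMulti}$, $i(R) = (R, \sum R^2)$. The only nontrivial step in the whole argument is the sum-preservation check in part (i); that is where the characterization of $Q_T(A)$ by pointwise evaluation at orders (Fact \ref{Q(A)Mult}) carries all the content.
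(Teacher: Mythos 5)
Your proof is correct and follows essentially the same approach as the paper: part (i) is carried out with the identical construction $Q(f)(\hat a) = \widehat{f(a)}$, the same well-definedness argument via precomposition $\sigma \mapsto \sigma \circ f$, and the same two-way use of Fact \ref{Q(A)Mult} for sum-preservation. The only organizational difference is in part (ii), where you define $\overline{f} = \pi_R^{-1}\circ Q(f)$ and invoke Fact \ref{piIso} to invert $\pi_R$, whereas the paper writes down $\overline{f}(\pi_{(A,T)}(a)) = f(a)$ directly and re-runs the well-definedness/morphism checks; these produce the same map, and your version is a mild streamlining rather than a distinct route.
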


\begin{proof}
 \begin{enumerate}[i)]
  \item Let $\pi_{(A,T)} \colon A \to Q_T(A)$ and $\pi_{(B,P)} \colon B \to Q_P(B)$ be the canonical projections. Consider the map $Q(f) \colon Q_T(A) \to Q_P(B)$ given by $Q(f) (\pi_{(A,T)}(a)) = \pi_{(B,P)}(f(a))$ (note that if $\pi_{(A,T)}(a) = \pi_{(A,T)}(b)$, then in particular for all $\sigma \in \mbox{sper}_P(B)$ we have $\sigma \circ f \in \mbox{sper}_T(A)$ and so $\sigma(f(a)) = \sigma(f(b))$; thus $\pi_{(B,P)}(f(a)) = \pi_{(B,P)} (f(b))$). So given $a,b,x \in A$ 
  
  \begin{itemize}
      \item $Q(f)(\pi_{(A,T)}(a) \pi_{(A,T)}(b)) = Q(f)(\pi_{(A,T)}(ab)) = \pi_{(B,P)} (f(ab)) = \pi_{(B,P)}(f(a)) \pi_{(B,P)}(f(b)) = \\ Q(f)(\pi_{(A,T)}(a))Q(f)(\pi_{(A,T)}(b))$.
      
      \item If $\pi_{(A,T)}(x) \in \pi_{(A,T)}(a) + \pi_{(A,T)}(b)$, then for all $\sigma \in \mbox{sper}_P(B)$ we have $\sigma \circ f \in \mbox{sper}_T(A)$ and so $\sigma (f (x)) \in \sigma ( f (a)) + \sigma ( f (b))$. Therefore, by Fact $\ref{Q(A)Mult}$, $\pi_{(B,P)}(f(x)) \in \pi_{(B,P)}(f(a)) + \pi_{(B,P)} (f(b))$, that is, $Q(f)(\pi_{(A,T)}(x)) \in Q(f)(\pi_{(A,T)}(a)) + Q(f)(\pi_{(A,T)}(b))$,
\end{itemize}
    Thus $Q(f) \colon Q_T(A) \to Q_P(B)$ is a multiring morphism -and unique that satisfies the diagram commutativity because $\pi_{(A,T)}, \pi_{(B,P)}$ are surjective.
    
    \item Consider the map $\overline{f} \colon Q_T(A) \to R$ given by $\overline{f} (\pi_{(A,T)}(a)) = f(a)$ -note that if $\pi_{(A,T)}(a) = \pi_{(A,T)}(b)$, then in particular for all $\sigma \in \mbox{sper}(R)$ we have $\sigma \circ f \in \mbox{sper}_T(A)$ and so $\sigma(f(a)) = \sigma(f(b))$; thus $f(a) = f(b)$ because $R$ is RRM). The proof that $\overline{f}$ is multiring morphism is analogous to the preceding item. Thus $\overline{f}$ is the unique morphism satisfying the diagram commutativity because $\pi$ is surjective.
 \end{enumerate} 
\end{proof}

\begin{corollary} \label{sper(Q(A))}
 Let $(A,T)$ be a p-multiring. Then the natural map $\pi_{(A,T)} \colon A \to Q_T(A)$ induces an homeomorphism $\mbox{sper}(Q_T(A)) \to \mbox{sper}_T(A)$.
\end{corollary}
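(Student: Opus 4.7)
The plan is to build the map $\Phi \colon \mbox{sper}(Q_T(A)) \to \mbox{sper}_T(A)$ given by $\tau \mapsto \tau \circ \pi_{(A,T)}$, then verify in three steps that it is well-defined, bijective, and a homeomorphism onto its image (where $\mbox{sper}_T(A)$ is equipped with the subspace topology from $\mbox{sper}(A)$).

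First I would check that $\Phi$ actually lands in $\mbox{sper}_T(A)$. The key observation is that for every $t \in T$, the element $\pi_{(A,T)}(t) = \hat t \in Q_T(A)$ is an idempotent: for each $\sigma \in \mbox{sper}_T(A)$ we have $\sigma(t) \in \{0,1\}$ by definition of $\mbox{sper}_T(A)$, so $\hat t(\sigma)^2 = \hat t(\sigma)$ pointwise on $\mbox{sper}_T(A)$, i.e.\ $\hat t^{\,2} = \hat t$ in $Q_T(A)$. Consequently any morphism $\tau \colon Q_T(A) \to 3$ must satisfy $\tau(\hat t)^2 = \tau(\hat t)$ in the hyperfield $3$, forcing $\tau(\hat t) \in \{0,1\}$; thus $\tau \circ \pi_{(A,T)}$ preserves $T$.

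Next I would obtain bijectivity from the universal property. Any $\sigma \in \mbox{sper}_T(A)$ is, by definition, a multiring morphism $A \to 3$ with $\sigma(T) \subseteq \{0,1\} = \sum 3^2$, hence a p-multiring morphism $(A,T) \to (3,\sum 3^2)$. Since $3$ is RRM, Theorem $\ref{uniProMRR}$.ii produces a unique morphism $\overline\sigma \colon Q_T(A) \to 3$ with $\overline\sigma \circ \pi_{(A,T)} = \sigma$; this $\overline\sigma$ is the preimage of $\sigma$ under $\Phi$, and its uniqueness (equivalently, the surjectivity of $\pi_{(A,T)}$) gives injectivity.

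Finally, for the topology: $\Phi$ is the restriction of the continuous spectral map $\pi_{(A,T)}^{*}$ induced on real spectra, so it is continuous. For openness, using the surjectivity of $\pi_{(A,T)}$ every basic open of $\mbox{sper}(Q_T(A))$ has the form $U(\pi(a_1),\ldots,\pi(a_n))$ with $a_i \in A$, and $\Phi$ carries this bijectively onto $U(a_1,\ldots,a_n) \cap \mbox{sper}_T(A)$, a basic open of the subspace $\mbox{sper}_T(A)$. A continuous open bijection is a homeomorphism, which closes the argument. The only genuinely non-mechanical step is the first one -- recognizing that elements of $T$ become idempotents in $Q_T(A)$ -- because it is what pins the image of $\Phi$ down to $\mbox{sper}_T(A)$ rather than all of $\mbox{sper}(A)$; once that is in hand, everything else is a direct application of Theorem $\ref{uniProMRR}$ and routine bookkeeping with basic opens.
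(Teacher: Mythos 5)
Your proof is correct and is exactly the detailed version of the paper's one-line argument ("Direct consequence of Theorem \ref{uniProMRR}"). In particular, the step you single out as the only non-mechanical one---that $\pi_{(A,T)}(T)$ consists of idempotents (equivalently, lies in $\sum Q_T(A)^2$), which pins the image of $\Phi$ to $\mbox{sper}_T(A)$---is precisely the piece of bookkeeping the paper leaves implicit in invoking that theorem.
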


\begin{proof}
 Direct consequence of Theorem $\ref{uniProMRR}$.
\end{proof}

\begin{lemma} \label{isoRS}
 Let $f \colon A \to B$ be surjective multiring morphism with $A,B$ RRM. If $\mbox{sper}(f) \colon \mbox{sper}(B) \to \mbox{sper}(A)$ is surjective, then $f$ is isomorphism and thus $\mbox{sper}(f)$ is an homeomorphism.
\end{lemma}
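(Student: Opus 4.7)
The plan is to use Corollary \ref{sepTheorem} (separation by real spectra in a RRM) twice: once to upgrade surjectivity to bijectivity of $f$, and once to show that the set-theoretic inverse $f^{-1}$ is itself a multiring morphism. The whole argument is built on the lifting property granted by the hypothesis: for every $\sigma \in \mbox{sper}(A)$ there exists $\tau \in \mbox{sper}(B)$ with $\sigma = \tau \circ f$.

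First I would show that $f$ is injective. Suppose $a,b \in A$ with $f(a) = f(b)$. Pick any $\sigma \in \mbox{sper}(A)$ and lift it to $\tau \in \mbox{sper}(B)$ with $\tau \circ f = \sigma$. Then $\sigma(a) = \tau(f(a)) = \tau(f(b)) = \sigma(b)$. Since this holds for every $\sigma \in \mbox{sper}(A)$ and $A$ is RRM, Corollary \ref{sepTheorem} yields $a = b$. Combined with surjectivity, $f$ is a bijection, so the set-theoretic inverse $g = f^{-1} \colon B \to A$ exists; it preserves $0$, $1$, negation and multiplication simply because $f$ does and $g$ is its inverse.

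Next I would check that $g$ respects the multivalued sum. The only nontrivial clause is: whenever $f(a) \in f(b) + f(c)$ in $B$ we must have $a \in b + c$ in $A$. Since $B$ is RRM, Corollary \ref{sepTheorem} applied in $B$ gives $\tau(f(a)) \in \tau(f(b)) + \tau(f(c))$ for every $\tau \in \mbox{sper}(B)$. For a given $\sigma \in \mbox{sper}(A)$, lift to $\tau$ with $\sigma = \tau \circ f$; then $\sigma(a) \in \sigma(b) + \sigma(c)$. Applying Corollary \ref{sepTheorem} again, this time in $A$, we conclude $a \in b + c$. Therefore $g$ is a multiring morphism and $f$ is an isomorphism.

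Finally, $\mbox{sper}$ is a functor, so $\mbox{sper}(f) \circ \mbox{sper}(g) = \mathrm{id}_{\mbox{sper}(A)}$ and $\mbox{sper}(g) \circ \mbox{sper}(f) = \mathrm{id}_{\mbox{sper}(B)}$. Since both $\mbox{sper}(f)$ and $\mbox{sper}(g)$ are spectral (Remark \ref{remarkOpen}), in particular continuous, the map $\mbox{sper}(f)$ is a continuous bijection with continuous inverse, hence a homeomorphism. The only conceptual step is the sum-preservation check for $f^{-1}$; the rest is routine once the separation theorem (Corollary \ref{sepTheorem}) is in hand, so I expect no real obstacle.
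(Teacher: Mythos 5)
Your proof is correct and follows essentially the same route as the paper: lift each $\sigma \in \mbox{sper}(A)$ through the surjection $\mbox{sper}(f)$, then apply Corollary~\ref{sepTheorem} in $A$ once to get injectivity and once to get that $f$ reflects the multivalued sum. The only cosmetic difference is that you invoke Corollary~\ref{sepTheorem} in $B$ to pass from $f(a) \in f(b)+f(c)$ to $\tau(f(a)) \in \tau(f(b))+\tau(f(c))$, where in fact that step is just the definition of $\tau$ being a morphism and needs no separation; this is harmless.
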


\begin{proof}
 First we will prove that $f$ is injective. Let $x,y \in A$ with $f(x) = f(y)$. Given $\sigma \in \mbox{sper}(A)$, by the surjective of $\mbox{sper}(f)$, exist $\tau \in \mbox{sper}(B)$
 such that $\sigma = \tau \circ f$. Thus $\sigma(x) = \tau(f(x)) = \tau(f(y)) = \sigma(y)$. Since $\sigma \in \mbox{sper}(A)$ was arbitrary, we have $x = y$. To conclude that $f$ is an isomorphism, we need to show that if $f(x) \in f(y) + f(z)$ for $x,y,z \in A$, then $x \in y + z$. But using the same argument for injectivity we have $\sigma (x) \in \sigma(y) + \sigma(z)$
 for all $\sigma \in \mbox{sper}(A)$ and so $x \in y + z$.
\end{proof}

\begin{theorem} \label{1sumSqua}
 Let $A$ be a semi-real multiring and $\pi \colon A \to Q(A)$ the canonical projection.
 
 \begin{enumerate}[i)]
  \item The multiring $A /_m 1 + \sum A^2$ is semi-real and the canonical map $A \to A /_m 1 + \sum A^2$ induces an isomorphism $Q(A) \cong Q(A /_m 1 + \sum A^2)$.
  
  \item If $A$ is hyperfield, then the morphism $\pi$ induces isomorphisms $A /_m \sum \dot{A}^2 \cong Q(A) \cong A /_m 1 + \sum A^2$.
 \end{enumerate}

\end{theorem}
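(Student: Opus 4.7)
For part (i), the plan is first to show $B \defeq A /_m (1 + \sum A^2)$ is semi-real, then to obtain the isomorphism $Q(A) \cong Q(B)$ via universal properties. Semi-realness is verified directly: if $\overline{-1} \in \sum_i \overline{a_i}^2$ in $B$, unfolding the Marshall sum yields $s, t_1, \ldots, t_n \in S \defeq 1 + \sum A^2$ with $-s \in \sum_i a_i^2 t_i$. Each $t_i \in 1 + \sum A^2$ places $a_i^2 t_i$ in $\sum A^2$ by half-distributivity, so $-s \in \sum A^2$; combining this with $s \in 1 + \sum A^2$ via associativity produces $-1 \in \sum A^2$, contradicting the hypothesis on $A$.

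For the isomorphism $Q(A) \cong Q(B)$, I apply Theorem \ref{uniProMRR}(ii) twice. The composite $A \to B \to Q(B)$ maps into an RRM and factors uniquely as $Q(A) \to Q(B)$. Conversely, the canonical map $A \to Q(A)$ sends $S$ into $\{1\}$: the RRM axiom $a + b^2 a = \{a\}$ with $a = 1$ (Fact \ref{piIso}) iterates to $1 + \sum Q(A)^2 = \{1\}$, so by Proposition \ref{marshallQ}(iii) the morphism factors through $B \to Q(A)$, which then extends to $Q(B) \to Q(A)$ since $Q(A)$ is RRM. Uniqueness in the universal properties forces these compositions to be the identity.

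For part (ii), the strategy is to show that both $A /_m \sum \dot{A}^2$ and $A /_m (1 + \sum A^2)$ are RRM hyperfields, from which the two isomorphisms with $Q(A)$ follow by the same universal-property argument as in part (i): the canonical $\pi \colon A \to Q(A)$ sends each of $\sum \dot{A}^2$ and $1 + \sum A^2$ into $\{1\}$, since in an RRM hyperfield every nonzero element squares to $1$ and $1 + 1 = \{1\}$. Both quotients are hyperfields because semi-realness places $0$ outside both sets. The RRM axioms from Corollary \ref{hyperfieldRR} are routine for $A /_m \sum \dot{A}^2$; in $A /_m (1 + \sum A^2)$ the axiom $\bar{a}^2 = \bar{1}$ is obtained by choosing $s \in 1 + a^{-2} \subseteq S$ so that $a^2 s \in a^2 + 1 \subseteq S$. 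The main technical point is verifying $\bar{1} + \bar{1} = \{\bar{1}\}$ in $A /_m (1 + \sum A^2)$, which reduces to proving the closure $S + S \subseteq S$ for $S = 1 + \sum A^2$: starting from $x \in (1 + u) + (1 + v)$ with $u, v \in \sum A^2$, iterated applications of associativity produce $h$ with $x \in 1 + h$ and $h \in 1 + k$ for some $k \in \sum A^2$, and since $1 = 1^2 \in \sum A^2$ together with closure of $\sum A^2$ under the multivalued addition force $h \in \sum A^2$, we conclude $x \in 1 + \sum A^2 = S$.
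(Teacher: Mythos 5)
Your proof is correct, but it takes a genuinely different route from the paper on all three counts. For the semi-realness of $B = A/_m(1+\sum A^2)$ in (i), the paper argues spectrally: since any order $\sigma\colon A \to 3$ kills $1+\sum A^2$, Proposition \ref{marshallQ} gives $\mbox{sper}(B) \cong \mbox{sper}(A)$, and Proposition \ref{basicReal} then transfers semi-realness. You instead unfold the Marshall sum directly and push $-1$ back into $\sum A^2$; this is more elementary and avoids invoking $\mbox{sper}$. For $Q(A) \cong Q(B)$, the paper applies $Q$ to the surjection $\rho$, notes the induced map on $\mbox{sper}$ of the $Q$'s is a homeomorphism, and then invokes Lemma \ref{isoRS} (a surjection of RRMs with surjective $\mbox{sper}$ is an isomorphism). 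You instead build both maps via the adjunction (Theorem \ref{uniProMRR}) and Proposition \ref{marshallQ}(iii) and invoke uniqueness; this is cleaner category-theoretically and makes the canonicity of the isomorphism transparent. For (ii), the paper verifies (via Corollary \ref{hyperfieldRR}) only that $F/_m(1+\sum F^2)$ is a real reduced hyperfield and then proves an explicit claim showing the canonical surjection $F/_m(1+\sum F^2) \to F/_m\sum\dot F^2$ reflects the sum, hence is an isomorphism. You instead verify the RRM axioms for both quotients and run the universal-property argument twice. One thing your version does better: where the paper merely asserts that $1+\sum F^2$ is ``closed under sums,'' you supply the actual proof that $S+S \subseteq S$ by two applications of associativity, which is a genuine gap-fill. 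Both routes are valid; the paper's avoids re-proving RRM axioms for the second quotient at the cost of an ad hoc multiplicative computation, while yours is more uniform.
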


\begin{proof}

\begin{enumerate}[i)]
\item Given a morphism $\sigma \colon A \to 3$, $\sigma (1 + \sum A^2) = \{1\}$. Thus by Proposition $\ref{marshallQ}$, the canonical morphism $\rho \colon A \to A /_m 1 + \sum A^2$ induces a bijection $\mbox{sper}(A /_m 1 + \sum A^2) \cong \mbox{sper}(A)$ and then by Proposition $\ref{basicReal}$ $A$ is semi-real if, and only if, $A/1 + \sum A^2$ is semi-real. 

Furthermore, since $\rho$ is surjective, $Q(\rho) \colon Q(A) \to Q(B)$ is surjective and by Corollary $\ref{sper(Q(A))}$ $\mbox{sper}(Q(\rho)) \colon \mbox{sper}(Q(B)) \to \mbox{sper}(Q(A))$ is surjective (in fact an homeomorphism). Then by Lemma $\ref{isoRS}$ the morphism $Q(\rho)$ is an isomorphism.

  \item By item $i)$, we have $Q(F) \cong Q(F/_m 1 + \sum F^2)$. On the other hand,
  
  \begin{itemize}
  \item Since $F$ is semi-real, $1 \neq 0$ in $F/_m 1 + \sum F^2$.
  \item If $x \neq 0$ in $F$, then $x^2 (1 + x^{-2}) = (1 + x^2)$ and so $x^2 = 1$ in $F/_m 1 + \sum F^2$.
  \item If $y \in 1 + 1$ in $F/_m 1 + \sum F^2$, then exists $s_1, s_2, s_3 \in 1 + \sum F^2$ such that $ys_1 \in s_2 + s_3$. Since $1 + \sum F^2$ is closed under sums, we have
  $ys_1 \in 1 + \sum F^2$. Thus $y = 1$ in $F/_m 1 + \sum F^2$.
  \end{itemize}

  Therefore by Corollary $\ref{hyperfieldRR}$, the hyperfield $F/_m 1 + \sum F^2$ is real reduced. Then follows by Theorem $\ref{uniProMRR}$ that 
  $Q(F/_m 1 + \sum F^2) \cong F/_m 1+\sum F^2$. On the other side, since $1 + \sum F^2 \subseteq \sum \dot{F}^2$, we have a surjective morphism 
  $f \colon F /_m 1 + \sum F^2 \to F /_m \sum \dot{F}^2$.
  
  \begin{claim*}
   Let $x,y,z \in F$ such that $x \in y + z$ in $F /_m \sum \dot{F}^2$. Then $x \in y + z$ in $F /_m 1 + \sum F^2$.
  \end{claim*}
  
  \begin{proof}
  By hypothesis, exists $s_1, s_2, s_3 \in \sum \dot{F}^2$ such that $xs_1 \in ys_2 + zs_3 \ (*)$. Thus it should exist $x_1, x_2,x_3 \in \dot{F}$ such that
  $s_ix_i^{-2} \in 1 + \sum F^2$, $i=1,2,3$ (if $s \in x_1^2 + \cdots + x_n^2$ and $x_1 \neq 0$, then $sx_1^{-2} \in 1 + \sum F^2$). But by item $ii)$ above exists $m_i,n_i \in 1 + \sum F^2$ such that $m_ix_i^{-2} = n_i$.
  Consequently, multiplying $(*)$ by $n_1n_2n_3$ 
  
  \begin{align*}
   x (s_1x_1^{-2})m_1n_2n_3 \in y(s_2x_2^{-2})m_2n_1n_3 + z(s_3x_3^{-2})m_3n_1n_2.
  \end{align*}

  Therefore $x \in y + z$ in $F/_m1 + \sum F^2$.
  \end{proof}

  By the above claim, the morphism $f$ reflects the multivaluated sum. Thus to conclude that $f$ is an isomorfism, we have to prove that $f$ is injective but if $f(x) = f(y)$, then $f(x) \in f(y) + f(0)$ in $F/_m \sum \dot{F}^2$. So by the claim $x \in y + 0$ in $F/_m 1 + \sum F^2$, that is, $x = y$.
  
  Thus $Q(F) \cong F/_m 1 + \sum F^2 \cong F/_m \sum \dot{F}^2$.

  \end{enumerate}
\end{proof}


\section{Structural presheaf for Multirings}

Here we present a structural presheaf for multiring. This presheaf generalize the sheaf for rings, the sheaf for hyperdomains in \cite{Jun} and the sheaf for real reduced multrings in \cite{DP2} (in the language of real semigroups). Thus rings, hyperdomains and real reduced multirings are geometric (the presheaf is a sheaf). But there are multirings non-geometric (example $\ref{nonGeometric}$) and in the next section it is given a first-order characterization of geometric von Neumann hyperrings. \\

Let $A$ be a multiring, $\alpha$ an ideal and $a \in A$. We define $\sqrt{\alpha} = \{x \in A \colon \mbox{ exist natural } n \geq 1 \mbox{ such that } x^n \in \alpha\}$
and $S_a =\{x \in A \colon \mbox{ exists } n \geq 0 \mbox{ such that } a^n \in (x)\} $ (if $a = 0$, we consider $0^0  = 1$.). We also define $A_a = \{1, a, a^2, \ldots\}^{-1}A$. Note that, if $\rho \colon A \to A_a$ is the canonical map, then $S_a = \rho^{-1} (A_a^{\times_w})$. 

\begin{proposition}\label{sheaf2}
 Let $A$ multiring. Let $\alpha$ be an ideal and $a \in A$.
 
 \begin{enumerate}[i)]
  \item $\sqrt{\alpha}$ is an ideal and $S_a$ is a multiplicative set.
 
  \item $\sqrt{\alpha} = \bigcap_{\alpha \subseteq p \in spec(A)} p$. In particular, $\bigcap_{p \in spec(A)} p$ is the set of nilpotents.
  
  \item $S_a = \bigcap_{a \notin p}p^c$. In particular, $\bigcap_{p \in spec(A)} p^c = A^{\times_w}$.
  
  \item  Furthermore, the following are equivalent:
  
  \begin{enumerate}[a)]
   \item $D(a) \subseteq D(b)$.
   
   \item $\sqrt{a} \subseteq \sqrt{b}$.
   
   \item $a \in \sqrt{b}$.
   
   \item $S_b \subseteq S_a$.
  \end{enumerate}

  \item If $D(a) \subseteq D(b)$, then exist unique morphism $\rho_{D(b),D(a)} \colon S_b^{-1}A \to S_a^{-1}A$ such that
  
  \[
  \begin{tikzcd}
    & A \arrow{dr} \arrow{dl} \\ 
  S_b^{-1}A \arrow{rr}{\rho_{D(b), D(a)}} && S_a^{-1} A
  \end{tikzcd}
  \]
  
  is a commutative diagram. In addition, if $D(a) \subseteq D(b) \subseteq D(c)$, then $\rho_{D(c),D(a)} = \rho_{D(c),D(b)} \circ \rho_{D(c),D(b)}$ and $\rho_{D(a), D(a)} = Id_{S_a^{-1}A}$.
  
  \item Exists unique morphism $ A_a \to S_a^{-1}A$ such that
    
    \[
  \begin{tikzcd}
    & A \arrow{dr} \arrow{dl} \\ 
  A_a \arrow{rr} && S_a^{-1} A
  \end{tikzcd}
  \]
  is a commutative diagram.

 \end{enumerate}

\end{proposition}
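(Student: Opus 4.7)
The plan is to prove the six items in order, with the Prime Ideal Theorem (Theorem \ref{PIT}) doing the heavy lifting in (ii)--(iv) and the universal property of localization (Proposition \ref{localization}(iii)) powering (v)--(vi). To bypass a messy binomial-style computation, I would prove (ii) as a set equality first, so that $\sqrt{\alpha}$ being an ideal follows automatically as an intersection of (prime) ideals. For $S_a$ multiplicative: $1 = a^0 \in (1)$ gives $1 \in S_a$, and witnesses $a^n \in (x)$, $a^m \in (y)$ multiply to $a^{n+m} \in (xy)$.

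For (ii), the inclusion $\subseteq$ is primality: $x^n \in \alpha \subseteq p$ forces $x \in p$. For $\supseteq$ I argue contrapositively: if $x \notin \sqrt{\alpha}$ then $\{x^n : n \geq 0\}$ is a multiplicative set disjoint from $\alpha$, so PIT yields a prime $p \supseteq \alpha$ disjoint from $\{x^n\}$, in particular $x \notin p$. Item (iii) is symmetric: $\subseteq$ is primality (if $a \notin p$ and $x \in p$, then $a^n \in (x) \subseteq p$ would force $a \in p$); $\supseteq$ applies PIT to the multiplicative set $\{a^n\}$ and the ideal $(x)$. The ``in particular'' clauses take $\alpha = (0)$ and $a = 1$, using $1 \in (x) \Leftrightarrow x \in A^{\times_w}$ by the definition of the ideal generated by $x$.

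For (iv), (ii) turns (a) $D(a) \subseteq D(b)$ and (c) $a \in \sqrt{(b)}$ into the same statement ``$b \in p \Rightarrow a \in p$ for every prime $p$'', and (b) follows from (c) because $\sqrt{(b)}$ is an ideal equal to its own radical. For (a) $\Leftrightarrow$ (d) one direction is immediate from (iii); the other uses $b \in S_b$ to conclude $b \in S_a$, so $a^m \in (b)$ for some $m$, which rules out any prime $p \in D(a)$ containing $b$.

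For (v), (iv) gives $S_b \subseteq S_a$, so the composition $A \to S_a^{-1}A$ sends $S_b$ into $(S_a^{-1}A)^{\times}$ and Proposition \ref{localization}(iii) produces the unique factorization $\rho_{D(b),D(a)}$; the cocycle and identity relations follow from uniqueness. For (vi), $\{a^n : n \geq 0\} \subseteq S_a$ is immediate, so the same universal property applied to $A \to S_a^{-1}A$ yields the desired map $A_a \to S_a^{-1}A$. The main obstacle is showing that $\sqrt{\alpha}$ is closed under the multivalued sum, which via a direct multiring binomial expansion would require chasing $z^{n+m-1} \in x+y$ through iterated half-distributivity into an ideal-theoretic sum of monomials $x^i y^j$; this is most painlessly handled by deriving it from the representation as an intersection of primes in (ii).
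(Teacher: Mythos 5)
Your proof is correct, and for the most part it follows the same skeleton as the paper's (PIT driving (ii)--(iv), the universal property of localization driving (v)--(vi)); but you make one genuinely different choice in item (i). The paper establishes that $\sqrt{\alpha}$ is an ideal \emph{before} proving (ii), via a direct multiring binomial argument: for $x,y \in \sqrt{\alpha}$ with $x^n, y^n \in \alpha$ and $z \in x + (-y)$, one expands $z^{2n} \in \sum_{i=0}^{2n} \binom{2n}{i} x^i(-y)^{2n-i} \subseteq \alpha$ (using that each monomial has $i \geq n$ or $2n-i \geq n$). You instead prove (ii) first --- which only needs PIT applied to the ideal $\alpha$ and the multiplicative set $\{1,x,x^2,\ldots\}$, not that $\sqrt{\alpha}$ is an ideal --- and then read off the ideal property of $\sqrt{\alpha}$ as an intersection of prime ideals. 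Your route is cleaner: it sidesteps having to justify a binomial-style expansion through iterated half-distributivity in a multiring, which the paper glosses over. There is no circularity, since the direct argument that $S_a$ is multiplicative does not depend on (ii). In (iv) you also take a slightly more streamlined path: the paper first proves a general equivalence $D(a) \subseteq D(a_1)\cup\cdots\cup D(a_n) \Leftrightarrow \sqrt{(a)} \subseteq \sqrt{\alpha}$ for finitely generated $\alpha$ and handles $c) \Rightarrow b)$ by a separate multinomial computation, whereas you use the $\bigcap$-of-primes description from (ii) to identify (a) and (c) outright, get (b) from $\sqrt{(b)}$ being radical, and settle (a) $\Leftrightarrow$ (d) via (iii) in one direction and the observation $b \in S_b \subseteq S_a$ in the other. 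Both are valid; yours trades the paper's computation for a further appeal to the spectral characterizations already established.
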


\begin{proof}
 
 \begin{enumerate}[i)]
  \item Given $x, y \in \sqrt{\alpha}$, choose $n \geq 0$ such that $x^n, y^n \in \alpha$. Then given $z \in x - y$, we have $z^{2n} \in \sum_{i=0}^{2n} {2n \choose i} x^i (-y)^{2n-i} \subseteq \alpha$.
  Since for all $i=0, \ldots, 2n$, $i \geq n$ or $2n-i \geq n$, we have $z \in \sqrt{\alpha}$. Then it is easy to see that $\sqrt{\alpha}$ is an ideal.
  
  Note that $1 \in S_a$ and given $x,y \in S_a$, exists $m,n \geq 0$ and $t_1, \ldots, t_n, l_1, \ldots, k_k \in A$ such that $a^m \in xt_1 + \cdots + xt_n$ and $a^n \in yl_1 + \cdots + yl_k$. Then $a^{m+n} \in \sum_{i,j}xyt_il_jl$ and so $xy \in S_a$.
  
  \item It is easy to see that $\sqrt{\alpha} \subseteq \bigcap_{\alpha \subseteq p} p$. Let $x \notin \sqrt{\alpha}$ and consider the multiplicative set $S = \{1, x, x^2,\ldots\}$. Since $\alpha \cap S = \emptyset$, by Theorem $\ref{PIT}$ exist a prime ideal $p$ with $\alpha \subseteq p$ and $x \notin p$. Then $x \notin \bigcap_{\alpha \subseteq p} p$.
  
  \item It is imediate that $S_a \subseteq \bigcap_{a \notin p} p^c$. Reciprocally, let $x \notin S_a$. Then $(x) \cap \{1, a, a^2, \ldots\} = \emptyset$ and by Theorem $\ref{PIT}$ exist a prime ideal $p$ such that $x \in p$ and $a \notin p$. Thus $x \notin \bigcap_{a \notin p} p^c$.
  
  \item Let $\alpha = (a_1) + \cdots + (a_n)$. Given any prime ideal $p$, we have $\alpha \not \subseteq p$ if, and only if, exists $1 \leq i \leq n$ such that $a_i \notin p$. Thus by item $ii)$ $\sqrt{\alpha} = \bigcap_{\alpha \subseteq p} p$ and $\sqrt{(a)} = \bigcap_{a \in p} p$. Therefore
  \begin{align*}
      D(a) \subseteq D(a_1) \cup \cdots \cup D(a_n) & \Leftrightarrow \mbox{ for all } p \in \mbox{spec}(A),\ a \notin p \rightarrow \alpha \not \subseteq p \\
      & \Leftrightarrow \sqrt{(a)} \subseteq \sqrt{\alpha}.
  \end{align*}

  In particular, we have the equivalence $a) \Leftrightarrow b$ and by $iii)$ $b) \Leftrightarrow d)$. We also have $b) \Rightarrow c)$.
 
 $c) \Rightarrow b)$: Since $a \in \sqrt{b}$, exists $n \geq 0$ such that $a^n \in (b) \ (*)$. Let $x \in \sqrt{a}$. Then exists $m \geq 0$ and $l_1, \ldots, l_o \in A$ such that $x^m \in al_1 + \cdots + al_o$. Thus we have $x^{nm} \in \sum_{i_1 + \cdots + i_o = n} {n \choose i_1, \ldots, i_o} a^n l_1^{i_1} \cdots l_o^{i_o} \subseteq (b)$ by $(*)$.
 
  \item If $D(a) \subseteq D(b)$, we have $S_b \subseteq S_a$ and so the existence and uniqueness of $\rho_{D(b),D(a)}$ follows by $\ref{localization}$. The other stated properties follows
  by the uniqueness just proved.
  
  \item Since $\{1, a, a^2, \ldots\} \subseteq S_a$, the existence and uniqueness of the morphism follows by $\ref{localization}$. 
 \end{enumerate}

\end{proof}

Let $A$ be a multiring and $a \in A$. We say that $A$ has the $a$-invertible property if $A_a^{\times_w} = A_a^{\times}$ and $A$ has the invertible property if it has the $a$-invertible property for every $a \in A$.

\begin{proposition}
 Let $A$ be a multiring and $a \in A$. Let $\rho_a \colon A \to A_a$, $\rho_{S_a} \colon A \to S_a^{-1}A$ be the canonical morphisms. Let $\overline{\rho_{S_a}} \colon A_a \to S_a^{-1}A$ the induced morphism.  The following are equivalent:
 
 \begin{enumerate}[i)]
    \item $\rho_a(S_a) \subseteq A_a^{\times}$.
    \item $A$ satisfies the $a$-invertible property.
    \item $S_a = \{x \in A \colon \mbox{ exists } n \geq 0,y \in A \mbox{ such that } a^n = xy\}$.
    \item $\overline{\rho_{S_a}} \colon A_a \to S_a^{-1}A$ is an isomorphism.
 \end{enumerate}
 
  In particular, if for every $a \in A$, $(a) = Aa$, then $A$ has the invertible property.
\end{proposition}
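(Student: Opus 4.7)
My plan is to establish the equivalences via the cycle (i)~$\Rightarrow$~(ii)~$\Rightarrow$~(iii)~$\Rightarrow$~(iv)~$\Rightarrow$~(i), relying on the identification $S_a = \rho_a^{-1}(A_a^{\times_w})$ recorded just before the proposition, together with the universal properties of the two localizations from Proposition~\ref{localization}. The technical overhead sits entirely in handling the multivaluated sum when clearing denominators in $A_a$; elsewhere I would appeal to universal properties to avoid explicit fraction manipulation.

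For (i)~$\Rightarrow$~(ii), take $u \in A_a^{\times_w}$, write $u = x/a^k$, and unravel a weak-invertibility witness $1 \in u b_1 + \cdots + u b_n$ with $b_i = y_i/a^{m_i}$ via the multivalued-sum rule in $A_a$ to obtain some $N \geq 0$ with $a^N \in x z_1 + \cdots + x z_n$ in $A$ (each $z_i$ being $y_i$ times a suitable power of $a$). This says $x \in S_a$, so by (i) $\rho_a(x) = x/1$ is invertible; since $1/a^k$ is the inverse of $\rho_a(a^k)$ and hence a unit, $u = (x/1)\cdot(1/a^k)$ lies in $A_a^\times$. The reverse direction is immediate from $\rho_a(S_a) \subseteq A_a^{\times_w}$. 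For (ii)~$\Rightarrow$~(iii), let $T = \{x \in A : \exists\, n \geq 0,\, y \in A,\ a^n = xy\}$; clearly $T \subseteq S_a$. Conversely, $s \in S_a$ gives $s/1 \in A_a^{\times_w} = A_a^\times$, so an inverse $y/a^m$ exists, and the equality rule in the localization yields $k \geq 0$ with $sya^k = a^{m+k}$ in $A$, placing $s$ in $T$.

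For (iii)~$\Rightarrow$~(iv), the relation $a^n = xy$ shows that $y/a^n$ inverts $x/1$ in $A_a$ for every $x \in S_a$, so $\rho_a$ already inverts $S_a$; Proposition~\ref{localization} then supplies a unique $g \colon S_a^{-1}A \to A_a$ with $g \circ \rho_{S_a} = \rho_a$, and uniqueness in the two universal properties forces $g \circ \overline{\rho_{S_a}} = \mathrm{id}_{A_a}$ and $\overline{\rho_{S_a}} \circ g = \mathrm{id}_{S_a^{-1}A}$. For (iv)~$\Rightarrow$~(i), each $\rho_{S_a}(s)$ is invertible by construction of $S_a^{-1}A$, and transporting its inverse back through the isomorphism $\overline{\rho_{S_a}}$ exhibits an inverse for $\rho_a(s)$ in $A_a$.

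The final clause falls out of (iii): if $(b) = Ab$ for every $b \in A$, then for any $a \in A$ and $s \in S_a$ we get $a^n \in (s) = As$, so $a^n = ys$ for some $y \in A$, placing $s$ in $T$ and giving $S_a = T$. The main obstacle is the single step (i)~$\Rightarrow$~(ii): one must extract, from a multivalued weak-invertibility relation in $A_a$, a clean statement in $A$ relating a power of $a$ to an element of the ideal $(x)$. Everything else is essentially universal-property formalities.
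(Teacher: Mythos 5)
Your cycle $(i)\Rightarrow(ii)\Rightarrow(iii)\Rightarrow(iv)\Rightarrow(i)$ matches the paper's proof step for step, including the denominator-clearing in $(i)\Rightarrow(ii)$, the use of the localization equality rule in $(ii)\Rightarrow(iii)$, and the double appeal to universal-property uniqueness in $(iii)\Rightarrow(iv)$. You additionally spell out the "in particular'' clause via $(iii)$, which the paper's proof leaves to the reader; your argument for it is correct.
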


\begin{proof}
$i) \Rightarrow ii)$: Let $\frac{x}{a^k} \in A_a^{\times_w}$. By definition, exists $\frac{t_1}{a^{n_1}}, \ldots, \frac{t_l}{a^{n_l}} \in A_a$ such that $1 \in \frac{x}{a^k} \frac{t_1}{a^{n_1}} + \cdots + \frac{x}{a^k}\frac{t_1}{a^{n_1}}$. Thus exists $m, n_1', \ldots, n_l'$ such that $a^m \in xt_1a^{n_1'} + \cdots + xt_la^{n_l'}$ in $A$ and so $x \in S_a$. By hypothesis, $\rho(x) \in A_a^{\times}$ and exists $\frac{y}{a^t} \in A_a$ with $\frac{x}{1} \frac{y}{a^t} = 1$. Then $\frac{x}{a^k} \in A_a^{\times}$.

$ii) \Rightarrow iii)$: Let $x \in S_a$. Then exists $n \geq 0$ with $a^n \in (x)$ and so $x \in A_a^{\times_w} = A_a^{\times}$. By definition, this means that exists $\frac{y}{a^t} \in A_a$ with $\frac{x}{1} \frac{y}{a^t} = 1$. Then exists $m \geq 0$ such that $xya^m = a^{m+t}$.

$iii) \Rightarrow iv)$: Observe that, given $x \in S_a$, by hypothesis exists $n \geq 0 $ and $y \in A$ with $a^n = xy$ and thus $\rho_a(x) \in A_a^{\times}$. Thus by the universal property of $\rho_{S_a}$, exists morphism $\overline{\rho_a} \colon S_a^{-1}A \to A_a$ such that $\rho_a = \overline{\rho_a} \circ \rho_{S_a}$. To conclude, note that $(\overline{\rho_a} \circ \overline{\rho_{S_a}}) \circ \rho_a = \rho_a$ and $(\overline{\rho_{S_a}} \circ \overline{\rho_a}) \circ \rho_{S_a} = \rho_{S_a}$. Thus, by the uniqueness of universal property of $\rho_a$ and $\rho_{S_a}$, we have $\overline{\rho_a} \circ \overline{\rho_{S_a}} = Id_{A_a}$ and $\overline{\rho_{S_a}} \circ \overline{\rho_a} = Id_{S_a}$.

$iv) \Rightarrow i)$: Let $x \in S_a$. Since $\overline{\rho_{S_a}} (\rho_a(x)) = \rho_{S_a} (x) \in (S_a^{-1}A)^{\times}$ and $\overline{\rho_{S_a}}$ is an isomorphism, $\rho_a(x) \in A_a^{\times}$.

\end{proof}

\begin{example}
\begin{itemize}
    \item If $A$ is an hyperring, then for every $a \in A$ $(a) = aA$; thus $A$ has the invertible property.
    
    \item Let $A$ be a real reduced multiring and $a,t_1, \ldots, t_n \in A$. Let $x \in at_1 + \cdots + at_n$. Then given $\sigma \in \mbox{sper}(A)$, if $\sigma(a) = 0$, we have $\sigma(x) = 0$; thus $\sigma(x) = \sigma(a^2x)$. Since $\sigma \in \mbox{sper}(A)$ was arbitrary, by $\ref{sepTheorem}$ we have $x = a^2x = a(ax)$. Therefore $(a) = aA$ and $A$ has the invertible property.
    
    \item Let $A$ be a multiring and consider $S_1 = \{x \in A \colon 1 \in (x)\} = A^{\times_w}$. Then $S_1^{\times}A$ has the 1-invertible property. In fact, if $\frac{x}{s} \in S_1^{\times}A$ is a weak-invertible, then $1 \in (\frac{x}{s})$. This implies that exists $u \in S_1$ with $u \in (x)$ in $A$ but since $1 \in (u)$ we have $1 \in (x)$. Thus $x \in S_1$ and $\frac{x}{s} \in S_1^{\times}A$ is invertible. Furthermore, the natural map $A \to S_1^{\times}A$ is initial between morphism from $A$ to multirings with the 1-invertible property.
\end{itemize}
\end{example}

Let $A$ be a multiring and $\mathcal{B}_A \coloneqq \{D(a)\}_{a \in A}$ be the set of basic open sets of $\mbox{spec}(A)$. Consider the relation $\mathcal{F}_A \colon \mathcal{B}_A \to \textbf{MultR}$ given by
$\mathcal{F}_A (D(a)) = S_a^{-1}A$ and if $D(a) \subseteq D(b)$, the morphism $\mathcal{F}_{A, D(b), D(a)} = \rho_{D(b), D(a)}$. By $\ref{sheaf2}$, $\mathcal{F}_A$ is in fact a functor. When there is no risk of confusion, $\mathcal{F}_A$ is denoted just by $\mathcal{F}$ and for $D(a) \subseteq D(b)$, the restriction map
$\mathcal{F}_{D(b), D(a)}$ is denoted by $\mathcal{F}_{b,a}$. Furthermore, given $p \in \mbox{spec}(A)$, we denote by $\mathcal{F}_p = \varinjlim_{a \notin p} \mathcal{F}(D(a))$ the stalk at $p$.

\begin{definition}
 Let $A$ be a multiring. The functor $\mathcal{F}_A$ is called structural presheaf associated to $A$. 
 
 \begin{enumerate}[i)]
  \item $\mathcal{F}$ is a monopresheaf if for all cover $D(e) = \bigcup_{i \in I} D(e_i)$ and all $a,b,c \in A_{e}$ with 
  
   \begin{align*}
   \mathcal{F}_{e, e_i} (a) \in \mathcal{F}_{e, e_i} (b) + \mathcal{F}_{e, e_i}(c) \mbox{ in } A_{e_i} \mbox{ for every } i \in I,
   \end{align*}

   then $a \in b + c$ in $A_e$.
   
   \item $\mathcal{F}$ is a sheaf if it is a monopresheaf and if for all cover $D(e) = \bigcup_{i \in I} D(e_i)$ and for $x_i \in A_{e_i}$ for all $i \in I$ with
 
   \begin{align*}
    \mathcal{F}_{e_i, e_ie_j} (x_i) = \mathcal{F}_{e_j, e_ie_j} (x_j) \mbox{ for every } i,j \in I,
   \end{align*}

  then exist (necessarily unique) $x \in A_e$ such that $\mathcal{F}_{e, e_i} (x) = x_i$ for every $i \in I$.
 \end{enumerate}

\end{definition}

\begin{definition}
Let $A$ be a multiring. If $\mathcal{F}_A$ is a monopresheaf, $A$ is called a mono-multiring; if $\mathcal{F}_A$ if a sheaf, then $A$ is called geometric.
\end{definition}

\begin{proposition} \label{fiber}
 Let $A$ a multiring. Given $p \in \mbox{spec}(A)$, then $\mathcal{F}_p \cong A_p$ naturally. 
\end{proposition}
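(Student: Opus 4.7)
The plan is to identify $A_p$ as the apex of a colimit cocone on the diagram defining $\mathcal{F}_p$ and then invoke Proposition $\ref{filtColim}$. For each $a \notin p$, Proposition $\ref{sheaf2}$.iii gives $S_a = \bigcap_{a \notin q} q^c \subseteq A \setminus p$, so the canonical map $A \to A_p$ sends $S_a$ into $A_p^\times$, and Proposition $\ref{localization}$.iii produces a unique morphism $\phi_a \colon S_a^{-1}A \to A_p$ extending $A \to A_p$. The compatibility $\phi_b \circ \rho_{D(a),D(b)} = \phi_a$ for $D(b) \subseteq D(a)$ with $a, b \notin p$ follows from the same uniqueness, so $(A_p, \{\phi_a\}_{a \notin p})$ is a dual cone over the diagram, which is directed because $p$ is prime: $D(ab) = D(a) \cap D(b)$ and $ab \notin p$ whenever $a, b \notin p$.

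Next I would verify the two conditions of Proposition $\ref{filtColim}$ for this cone. Condition (a) is immediate: every $x/s \in A_p$ with $s \notin p$ equals $\phi_s(x/s)$, since $s \in S_s$ and $x/s$ already makes sense in $S_s^{-1}A$. Condition (b) covers atomic formulas in the multiring language, namely equality and the ternary sum relation $\pi$, and the argument is parallel in both cases: if $\phi_a(u/s) = \phi_a(v/t)$ in $A_p$, a witness $q \in A \setminus p$ satisfies $utq = vsq$ in $A$; if $\phi_a(u/s) \in \phi_a(v/t) + \phi_a(w/r)$ in $A_p$, a witness $q \in A \setminus p$ satisfies $utrq \in vsrq + wstq$ in $A$. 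In either case setting $k = aq$ yields $k \notin p$, $D(k) \subseteq D(a)$ and $q \in S_k$ (since $k \in (q)$), so the atomic formula already holds in $S_k^{-1}A$.

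The only subtle technical point is this choice $k = aq$, which promotes the witness $q$ coming from $A_p$ into a strictly refined index of the diagram where it can actually be used; I do not expect any further obstacle. Naturality is then automatic from the fact that every $\phi_a$ was defined as the unique factorization of the canonical map $A \to A_p$, so the resulting isomorphism $\mathcal{F}_p \cong A_p$ commutes with the canonical maps out of $A$ and is functorial in $A$.
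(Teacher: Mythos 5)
Your proof is correct, but it follows a genuinely different route from the paper's. The paper verifies the universal property of the colimit directly: it forms the cocone $\{i_{S_a} \colon S_a^{-1}A \to A_p\}_{a \notin p}$, then given an arbitrary compatible family $\{h_{S_a} \colon S_a^{-1}A \to B\}$ it manufactures $h \colon A_p \to B$ by appealing to the universal property of $\rho_p \colon A \to A_p$ (after checking that $h' = h_{S_1}\circ\rho_{S_1}$ sends $A\setminus p$ into $B^\times$), and then deduces $h\circ i_{S_a} = h_{S_a}$ by the epimorphism property of the $\rho_{S_a}$. You instead invoke the model-theoretic characterization of filtered colimits in Proposition~\ref{filtColim}, reducing the claim to the joint-surjectivity condition (a) and the atomic-formula reflection condition (b). Your verification of (b) is the only place that requires care, and the key move, replacing the witness $q \in A\setminus p$ by the refined index $k = aq$ so that $q$ lands in $S_k$ and becomes invertible in $S_k^{-1}A$, is exactly right: $k\notin p$ because $p$ is prime, $D(k)\subseteq D(a)$, and the same $q$ then witnesses equality (resp. membership in the ternary sum) already at stage $k$. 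Your approach buys a shorter, more mechanical argument at the cost of relying on Proposition~\ref{filtColim}; the paper's is more self-contained but requires tracking the extra cocone $\{h_{S_a}\}$. Either way the resulting isomorphism factors the canonical map $A\to A_p$, which gives naturality, as you note.
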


\begin{proof}
  Given $a \notin p$, since $S_a \subseteq A \setminus p$, exist unique map $i_{S_a} \colon S_a^{-1} A \to A_p$ such that
 
    \[
  \begin{tikzcd}
    & A \arrow{dr}{\rho_p} \arrow{dl} \\ 
  S_a^{-1}A \arrow{rr}{i_{S_a}} &&  A_p
  \end{tikzcd}
  \]
  
  is a commutative diagram. Thus, by the uniqueness property, it is easy to see that if $D(a) \subseteq D(b)$, then
  
    \[
  \begin{tikzcd}
    & A_p  \\ 
  S_b^{-1}A \arrow{rr}{\rho_{D(b),D(a)}} \arrow{ur}{i_{S_b}} &&  S_a^{-1}A \arrow{ul} [above] {i_{S_a}}
  \end{tikzcd}
  \]  
 
 is a commutative diagram. Thus, the family $\mathcal{I} = \{i_{S_a} \colon S_a^{-1}A \to A_p \colon a \notin p\}$ is a cocone. To see it is in fact a limit cocone,
 consider a family of morphism $\{h_{S_a} \colon S_a^{-1}A \to B \colon a \notin p\}$ such that if $D(a) \subseteq D(b)$, $h_{S_b} = h_{S_a} \circ \rho_{D(b),D(a)}$.
 We have to prove that exist unique morphism $h \colon A_p \to B$ such that given $a \notin p$, $h_{S_a} = h \circ i_{S_a}$.

 \textit{Existence:} Define $h' = h_{S_1} \circ \rho_{S_1} \colon A \to B$.
 To use the universal property of the map $\rho_p \colon A \to A_p$, we have to prove that for all $a \notin p$, $h'(a) \in B^{\times}$. But given $a \notin p$, since

    \[
  \begin{tikzcd}
    & A \arrow{dr}{h'} \arrow{dl}[above]{\rho_{S_a}} \\ 
  S_a^{-1}A \arrow{rr}{h_{S_a}} &&  B
  \end{tikzcd}
  \]
 
 is a commutative diagram and $\rho_{S_a} (a) \in S_a^{-1}A$ is invertible, we have $h' (a) \in B^{\times}$. Then exist a morphism $h \colon A_p \to B$ such that
 
 \[
  \begin{tikzcd}
    & A \arrow{dr}{h'} \arrow{dl}[above]{\rho_p} \\ 
  A_p \arrow{rr}{h} &&  B
  \end{tikzcd}
  \]
 
 Then for all $a \notin p$, since $i_{S_a} \circ \rho_{S_a} = \rho_p$, we have that $h \circ i_{S_a} \circ \rho_{S_a} = h \circ \rho_p = h' = h_{S_a} \circ \rho_{S_a}$.
 But since $\rho_{S_a}$ is an epimorphism, we have $h \circ i_{S_a} =  h_{S_a}$. \\
 \textit{Uniqueness:} Just note that for every $x \in A_p$, exists $a \notin p$ and $y \in S_a^{-1}A$ such that $i_{S_a} (y) = x$.

\end{proof}

\begin{remark} \label{fibertoOpen}
 Let $A$ be a multiring, $a,b,c \in A$ and $p \in \mbox{spec}(A)$. Note that if $a \in b + c$ in $A_p$, then exists $x \notin p$ with $ax \in bx + cx$. Thus $a \in b + c$ in $A_x$, with $p \in D(x)$. Thus given $e \in A$, if $a \in b + c$ in $A_p$ for all $p \in D(e)$, then exists $e_1, \ldots, e_n$ with $D(e) = \bigcup_{i=1}^n D(e_i)$ and $a \in b + c$ in $A_{e_i}$ for all $i=1, \ldots, n$ because $D(e)$ is compact.
\end{remark}

\section{Geometric von Neumann Regular Hyperrings}

\begin{proposition} \label{cVn}
 Let $A$ be an hyperring. The following are equivalent:
 
 \begin{enumerate}[i)]
  \item $\mbox{spec}(A)$ is a Boolean topological space and $\sqrt{0} = 0$.
  
  \item For all $a \in A$ exist $b \in A$ such that $a = a^2b$.
 \end{enumerate}

\end{proposition}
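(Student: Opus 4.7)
The plan is to prove the two implications separately, using the hyperring axioms (especially the converse of half-distributivity) to pass between the algebraic identity $a=a^2b$ and the spectral condition.

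For $(ii) \Rightarrow (i)$, I would first note that iterating $a=a^2b$ gives $a = a^{n+1}b^n$ for all $n \geq 1$, so any nilpotent element is $0$; this yields $\sqrt{0}=0$. Next I would show that every $p \in \mbox{spec}(A)$ is maximal by proving that $A/p$ is a hyperfield (so $p$ is maximal by Proposition \ref{idealq}(iv)). By Proposition \ref{idealq}(ii)--(iii) the quotient $A/p$ is a hyperring multidomain in which the relation $\bar{a} = \bar{a}^2 \bar{b}$ persists. Since $\bar{a} \in \bar{a} + 0$, axiom (i) of Definition \ref{multiDef} gives $0 \in \bar{a} - \bar{a}^2\bar{b} = \bar{a}\cdot 1 + \bar{a}\cdot(-\bar{a}\bar{b})$. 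The hyperring converse of half-distributivity then produces a witness $t \in 1 - \bar{a}\bar{b}$ with $\bar{a}t = 0$; if $\bar{a}\neq 0$, the multidomain property forces $t=0$, so $\bar{a}\bar{b}=1$. Thus every nonzero element of $A/p$ is invertible. Finally, a spectral space in which every prime is closed is $T_1$, and a $T_1$ spectral space is Boolean (in a spectral space the basic quasi-compact opens agree with the constructible-closed opens when the specialization order is trivial).

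For $(i) \Rightarrow (ii)$, I would fix $a\in A$ and exploit that $\mbox{spec}(A)$ is Boolean, so $V(a) := \mbox{spec}(A) \setminus D(a)$ is clopen, hence quasi-compact and open. Cover it by basic opens and extract a finite subcover $V(a) = D(e_1) \cup \cdots \cup D(e_n)$. Since $D(ae_i) = D(a) \cap D(e_i) = \emptyset$, the element $ae_i$ is nilpotent, and $\sqrt{0}=0$ forces $ae_i=0$. The equality $\mbox{spec}(A) = D(a) \cup D(e_1) \cup \cdots \cup D(e_n)$ together with the Prime Ideal Theorem (\ref{PIT}(ii)) implies that the ideal generated by $\{a,e_1,\ldots,e_n\}$ equals $A$; since $A$ is a hyperring this ideal has the explicit form noted before Theorem \ref{PIT}, so there exist $r,r_1,\ldots,r_n\in A$ with
\begin{equation*}
1 \in ra + r_1 e_1 + \cdots + r_n e_n.
\end{equation*}
Multiplying through by $a$ via iterated half-distributivity and using $ae_i=0$ collapses the sum, yielding $a \in a^2 r + 0 + \cdots + 0 = \{a^2 r\}$. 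Taking $b=r$ gives $a = a^2 b$.

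The main obstacle is the hyperfield step in $(ii)\Rightarrow(i)$: one needs the hyperring axiom (converse of half-distributivity) in a genuine way to factor $\bar{a}$ out of $\bar{a}\cdot 1 + \bar{a}\cdot(-\bar{a}\bar{b})$, since in a general multiring one could not extract such a $t$. A secondary care point is the topological fact that a $T_1$ spectral space is a Stone space, which can be invoked as a standard property of spectral spaces.
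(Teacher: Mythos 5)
Your proof of $(i)\Rightarrow(ii)$ is essentially the paper's own argument: cover $D(a)^c$ by finitely many $D(e_i)$, observe $ae_i\in\sqrt{0}=0$, use the Prime Ideal Theorem and the hyperring form of finitely generated ideals to obtain $1\in ra+r_1e_1+\cdots+r_ne_n$, and multiply by $a$. Your proof of $(ii)\Rightarrow(i)$, however, takes a genuinely different route. The paper is more direct and constructive: from $0\in a\cdot 1 + a\cdot(-ab)$ it extracts, in $A$ itself, an $l\in 1-ab$ with $al=0$, and then shows $D(a)^c=D(l)$, which immediately yields a clopen basis and hence a Boolean spectrum; for $\sqrt{0}=0$ it notes that $xb$ is idempotent and concludes $xb=(xb)^n=0$. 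You instead work in each residue quotient $A/p$, showing it is a hyperfield (hence every prime is maximal), deduce that the space is $T_1$, and then invoke the standard fact that a $T_1$ spectral space is a Stone space; for $\sqrt{0}=0$ you iterate $a=a^{n+1}b^n$. Both are correct. The paper's approach is self-contained and produces the complement element $l$ explicitly (which foreshadows the idempotent $a^c$ used heavily later); your approach is more conceptual and modular, at the cost of appealing to an external piece of spectral-space theory (the equivalence of $T_1$ and Boolean for spectral spaces) that the paper never states or proves. One small remark: your multidomain step, "if $\bar{a}\neq 0$ then $t=0$, so $\bar{a}\bar{b}=1$," is fine, but it is worth observing it does not need the full quotient machinery at all; the paper factors the same relation $0\in a\cdot 1 + a\cdot(-ab)$ inside $A$ directly, which is both shorter and yields the clopen decomposition $\mbox{spec}(A)=D(a)\sqcup D(l)$ on the spot.
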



\begin{proof}
  $i) \Rightarrow ii)$ Let $a \in A$. Since $\mbox{spec}(A)$ is a boolean topological space, exists $a_1, \ldots, a_n \in A$ such that
  $D(a)^c = D(a_1) \cup \cdots \cup D(a_n)$. In particular, $D(aa_i) = D(a) \cap D(a_i)  = \emptyset$ and so $aa_i \in \sqrt{0}=0$ for all $i=1, \ldots, n$.
  Seeing that $\mbox{spec}(A) = D(a) \cup D(a)^c = D(a) \cup D(a_1) \cup \cdots \cup D(a_n)$, by Proposition $\ref{sheaf2}$ exists $b, t_1, \ldots, t_n \in A$ such that $1 \in ab + a_1t_1 + \cdots +a_nt_n$. Then multiplying by $a$ entails $a = a^2b$.
  
  $ii) \Rightarrow i)$ Let $a \in A$ and take $b$ with $a = a^2b$. Then, since $A$ is an hyperring, exist $l \in 1-ab$ such that
  $al=0$. It is easy to see that $D(a)^c = D(l)$ and thus $\mbox{spec}(A)$ is a Boolean topological space. Furthermore, given $x \in \sqrt{0}$, exists $n \geq 1$ such that $x^n = 0$. Let $b \in A$ with $x = x^2b$. Since $(xb)^2 = x^2b^2 = xb $, we have $xb = (xb)^n = x^n b^n = 0$ and so $x = (xb)b = 0$.

\end{proof}

\begin{definition}
 Let $A$ be an hyperring. If any of the equivalent conditions of $\ref{cVn}$ is valid, $A$ is called von Neumann hyperring. The category
 of all von Neumann hyperring with the usual notion of morphism is denoted by $HVN$.
\end{definition}

\begin{remarknot} \label{vNremarks}
\begin{itemize}
Let $A$ be a von Neumann hyperring.

\item Since all axioms of von Neumann hyperring are geometric sentences, the inductive limit of vNH are again vNH.

\item Since $\mbox{spec}(A)$ is Boolean, by remark $\ref{remarkOpen}$ every prime ideal of $A$ is maximal.

 \item The basic open sets of $\mbox{spec}(A)$ are given by idempotents elements. Indeed,
 given $a \in A$, take $b \in A$ with $a = a^2b$; then $ab$ is idempotent and $D(a) = D(ab)$. 
 
 \item For each $a \in A$ idempotent, since $a^2 = a$ and $A$ is an hyperring, exist $a^c \in 1 - a$ such that $a a^c = 0$. First note that if $x,y\in 1 - a$ and $ax = ay = 0$, then
 $xy \in y - ya = \{y\} $ and $xy \in x - xa = \{x\}$. So $x = xy = y$ and $a^c$ is unique determined. Furtheremore, $(a^c)^2 \in a^c - aa^c = \{a^c\}$ and thus
 $a^c$ is idempotent. We also have $D(a)^c = D(a^c)$.
 
 \item Given $a,b \in A$ idempotent, if $D(a) = D(b)$, then $D(ab^c) = D(a) \cap D(b)^c = \emptyset = D(a^cb)$. Thus, since $\sqrt{0} = 0$,
 $0 = ab^c \in a(1-b)$ and $0 = a^cb \in b(1-a)$. Then $a = ab = b$.
 
 \item Given $a \in A$, we denote by $i(a)$ the (unique) idempotent with $D(a) = D(i(a))$ and define $a^c \defeq i(a)^c$.
 
\item The topological space $\mbox{sper}(A)$ is also Boolean. Given $a \in A$, let $x \in a - a^c$. Let $\sigma \in \mbox{sper}(A)$. If $\sigma(a) = 0 = \sigma(i(a))$, then $\sigma(x) = -\sigma(i(a))^c = -1$; if $\sigma(a) \neq 0$, then $\sigma(a^c) = 0$ and $\sigma(x) = \sigma$. Thus $\sigma(x) \in \{1,-1\}$ and $\sigma(a) = 1$ if, and only if, $\sigma(x) = 1$. Therefore $U(a)^c = U(x)^c = U(-x)$.
 
 \item Let $f \colon A \to B$ be a morphism of von Neumann hyperring. Given $a \in A$, we have $f(i(a)) = i(f(a))$ and $f(a^c) = f(a)^c$. In fact, let $b \in A$ with $a = a^2b$. Then $i(a) = ab$ and $i(f(a)) = f(a)f(b)$ because $f(a) = f(a)^2f(b)$. Thus $f(i(a)) = i(f(a))$. On the other hand,
 since $i(a)^c \in 1 - i(a), i(a) i(a)^c = 0$, we have $f(i(a)^c) \in 1 - f(i(a)), f(i(a))f(i(a)^c) = 0$ and so $ f(i(a))^c = f(i(a)^c)$. Thus $f(a^c) = f(i(a)^c) = f(i(a))^c = i(f(a))^c = f(a)^c$.
 
 \item If $D(a) \subseteq D(b)$, $a,b$ idempotents, then $a \in \sqrt{(b)}$. Thus exist $l$ such that $a = bl$. Then $ab = bbl = a$.
 
\end{itemize}
\end{remarknot}

\begin{definition}
 Let $A$ be a von Neumann hyperring. A set $\{e_1, \ldots, e_n\} \subseteq \mbox{Id}(A)$ is partition if for all $i \neq j$, $e_ie_j = 0$ and it is a partition of unity if it is a partition and $1 \in e_1 + \cdots + e_n$.
\end{definition}

\begin{lemma} \label{orth}
Let $A$ be a von Neumann hyperring and $B = \{e_1, \ldots, e_n\}$ a partition.

\begin{enumerate}[i)]
    \item If $x \in e_1 + \cdots + e_n$, then $D(x) = \bigcup_{i=1}^n D(e_i)$. In particular, given $U \subseteq \mbox{spec}(A)$ clopen subset, exists $e \in Id(A)$ such that $U = D(e)$.
    
    \item Assume that $B$ is partition of unity and $e_i + e_i^c = \{1\}$ for all $i=1, \ldots, n$. Then $e_1 + \cdots + e_n = \{1\}$.
    
    \item If $B = \{e_1, \ldots, e_n\}$ and $\{f_1, \ldots, f_k\}$ are partitions of unity, then $\{e_1f_1, \ldots, e_1f_k, \ldots, e_nf_k\}$ is also a partition of unity and $e_1 + \cdots + e_n, f_1 + \cdots + f_k \subseteq e_1f_1 + \cdots + e_1f_k + e_2f_1 + \cdots + e_nf_k$.
\end{enumerate}

\end{lemma}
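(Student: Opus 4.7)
For part (i), I would prove both inclusions of the set equality $D(x) = \bigcup D(e_i)$. The key computation is that $xe_i = e_i$ for each $i$: applying half-distributivity iteratively to $x \in e_1 + \cdots + e_n$ gives $xe_i \in e_1e_i + \cdots + e_ne_i$, and the partition condition ($e_je_i = 0$ for $j \neq i$, $e_i^2 = e_i$) collapses this through the axiom $a + 0 = \{a\}$ to the singleton $\{e_i\}$. Thus $e_i = xe_i$, so $e_i \in p$ whenever $x \in p$, giving $D(e_i) \subseteq D(x)$. Conversely, if $p \in D(x)$ but $p$ contained every $e_i$, then $e_1 + \cdots + e_n \subseteq p$ by ideal closure under the multivalued sum, forcing $x \in p$, a contradiction. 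For the clopen assertion, $U$ is compact open in the Boolean space $\mbox{spec}(A)$, so $U = D(a_1) \cup \cdots \cup D(a_n)$; replacing each $a_j$ by $i(a_j) \in \mbox{Id}(A)$ and setting $e := (i(a_1)^c \cdots i(a_n)^c)^c$ produces an idempotent with $D(e) = \bigl(\bigcap_j D(i(a_j))^c\bigr)^c = \bigcup_j D(i(a_j)) = U$.

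For (ii), the crux is the following hyperring lemma, which I would establish first: \emph{if $e \in \mbox{Id}(A)$ satisfies $e + e^c = \{1\}$, then $ae + ae^c = \{a\}$ for every $a \in A$.} The $\ni$-direction follows from half-distributivity applied to $1 \in e + e^c$; the reverse uses the strong hyperring axiom, which converts $x \in ae + ae^c$ into $x = za$ for some $z \in e + e^c = \{1\}$, forcing $x = a$. Granted this, since $\{e_i\}$ is a partition of unity, part (i) gives $D(e_1^c \cdots e_n^c) = \bigl(\bigcup D(e_i)\bigr)^c = \emptyset$, hence $e_1^c \cdots e_n^c = 0$ from $\sqrt{0} = 0$. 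Applying the lemma recursively, first with $(a,e) = (1,e_1)$, then $(e_1^c, e_2)$, then $(e_1^c e_2^c, e_3)$, and so on, produces the chain of singleton equalities
\[
\{1\} = e_1 + e_1^c e_2 + e_1^c e_2^c e_3 + \cdots + e_1^c \cdots e_{n-1}^c e_n,
\]
the would-be tail $e_1^c \cdots e_n^c$ at the final stage being $0$ and dropping out via $a + 0 = \{a\}$. Each remaining idempotent $e_1^c \cdots e_{i-1}^c e_i$ has prime spectrum $D(e_i) \cap \bigcap_{j<i} D(e_j)^c = D(e_i)$ (using pairwise disjointness of the $D(e_j)$), so it equals $e_i$ by uniqueness of the idempotent representing a clopen. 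Substituting yields $\{1\} = e_1 + \cdots + e_n$.

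For (iii), the orthogonality $(e_if_j)(e_{i'}f_{j'}) = 0$ for $(i,j) \neq (i',j')$ is immediate from either $e_ie_{i'} = 0$ or $f_jf_{j'} = 0$. For the partition-of-unity condition, half-distributivity applied to $1 \in e_1 + \cdots + e_n$ multiplied by each $f_j$ gives $f_j \in e_1f_j + \cdots + e_nf_j$; substituting via associativity into $1 \in f_1 + \cdots + f_k$ produces $1 \in \sum_{i,j} e_if_j$. The containment $e_1 + \cdots + e_n \subseteq \sum_{i,j} e_if_j$ follows by the symmetric move: any $x$ in the left side satisfies $x = 1 \cdot x \in xf_1 + \cdots + xf_k$ (using $1 \in \sum_j f_j$), and each $xf_j \in e_1f_j + \cdots + e_nf_j$ by half-distributivity. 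The inclusion for $f_1 + \cdots + f_k$ is symmetric. I expect the main obstacle to be part (ii): executing the telescoping substitution inside multivalued sums while maintaining the running value as the singleton $\{1\}$ requires both the strong hyperring distributivity (to guarantee singletons at each step) and the hypothesis $\sqrt{0} = 0$ (to annihilate the tail product $e_1^c \cdots e_n^c$).
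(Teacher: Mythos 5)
Your proof is correct and takes the same overall strategy as the paper's, with two organizational differences worth noting. In the ``In particular'' clause of (i), you construct the idempotent directly as $e = (i(a_1)^c \cdots i(a_n)^c)^c$, whereas the paper first passes to a disjoint refinement $e_1' = e_1$, $e_i' = e_i\, e_1^c \cdots e_{i-1}^c$ and reapplies the first part to any $x \in e_1' + \cdots + e_n'$; both work. In (ii), you telescope top-down, starting from $\{1\} = e_1 + e_1^c$ and repeatedly expanding $e_1^c \cdots e_{i-1}^c$ via your singleton lemma $ae + ae^c = \{a\}$, then killing the tail $e_1^c \cdots e_n^c$ with $\sqrt{0} = 0$ and re-identifying $e_1^c \cdots e_{i-1}^c e_i = e_i$ again via $\sqrt{0}=0$ (uniqueness of idempotents). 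The paper telescopes bottom-up: it first shows $e_n = e_1^c \cdots e_{n-1}^c$ by multiplying $1 \in e_1 + \cdots + e_n$ by $e_1^c \cdots e_{n-1}^c$, using only the algebraic identity $e_i e_j^c = e_i$ for $i \neq j$ (no appeal to $\sqrt{0}=0$ at this point), and then collapses the last two summands at each stage via $e_i + e_i^c = \{1\}$. Your route makes heavier use of the von Neumann hypothesis, the paper's is a bit more algebraically economical, but the core mechanism --- the hyperring distributivity turning $e_i + e_i^c = \{1\}$ into a telescoping collapse --- is identical. Part (iii) matches the paper's argument essentially verbatim.
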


\begin{proof}
\begin{enumerate}[i)]
    \item If $p \in D(x)$, then exists $i$ with $p \in D(e_i)$. Reciprocally, let $p \in D(e_i)$. For all $j \neq i$ we have $e_je_i = 0 \in p$ and thus $e_j \in p$. Then if $x \in p$, then $e_i \in p$, an absurd. Thus $p \in D(x)$.
    
    Let $U \subseteq \mbox{spec}(A)$ clopen. Then exists $e_1, \ldots, e_n \in \mbox{Id}(A)$ with $U = \bigcup_{i=1}^n D(e_i)$. Let $e_1' = e_1, e_i' = e_i (e_1^c \cdots e_{i-1}^c)$, $i=2, \ldots, n$. Note that $D(e_i') = D(e_i) \cap \bigcap_{k=1}^{i-1} D(e_k)^c$ and $e_i'e_j' = 0$ if $i \neq j$. Then given $x \in e_1' + \cdots + e_n'$, we have $D(x) = \bigcup_{i=1}^n D(e_i') = \bigcup_{i=1}^n D(e_i) = U$.
    
    \item Given $i \neq j$, $e_i e_j = 0$ implies $e_ie_j^c = e_i$ because $e_j^c \in 1 - e_j$. Then $e_i e_1^c \cdots e_{i-1}^c = e_i$ for all $i =2, \ldots, n-1$ and multiplying $1 \in e_1 + \cdots + e_n$ by $e_1^c \cdots e_{n-1}^c$ entails $e_n = e_1^c \cdots e_{n-1}^c$. Therefore
    
    \begin{align*}
     e_1 + e_2 + \cdots + e_{n-1} + e_n & = e_1 + e_2 + \cdots + e_{n-1} e_1^c \cdots e_{n-2}^c + e_1^c \cdots e_{n-1}^c \\
							& =  e_1 + e_2 + \cdots + e_1^c \cdots e_{n-2}^c(e_{n -1} + e_{n-1}^c) \\
							& =  e_1 + e_2 + \cdots + e_{n-2}e_1^c \cdots e_{n-3}^c + e_1^c \cdots e_{n-2}^c \\
							& \vdots \\
							& = e_1 + e_1^c = \{1\}.
    \end{align*}
    
    \item Since $1 \in e_1 + \cdots + e_n$ and $1 \in f_1 + \cdots + f_k$, we have $1 = 1 \cdot 1 \in e_1f_1 + \cdots + e_1f_k + e_2f_1 + \cdots e_nf_k$ and if $(i,j) \neq (i', j')$, then $(e_if_j)(e_{i'}f_{j'}) = 0$. Furthermore, the relation $1 \in e_1 + \cdots + e_n$ implies that $f_j \in e_1f_j + \cdots + e_nf_j$ and so $f_1 + \cdots + f_k \subseteq e_1f_1 + \cdots + e_1f_k + e_2f_k + \cdots + e_nf_k$. By a symmetric same argument, we conclude $e_1 + \cdots + e_n \subseteq e_1f_1 + \cdots + e_1f_k + e_2f_1 + \cdots + e_nf_k$.
\end{enumerate}
\end{proof}

Let $A$ be a von Neumann hyperring. By the above lemma, $\mathcal{B}_A$ is the Boolean algebra of clopens sets of $\mbox{spec}(A)$. Let $\mathcal{F}$ be the structural pre-sheaf of $A$. Given $p \in \mbox{spec}(A)$, note that $pA_p \subseteq A_p$ is is the zero ideal because given $x \in p$, $x^c \notin p$ and $xx^c = 0$. Then by Proposition $\ref{kAp}$ the fiber $\mathcal{F}_p \cong A_p \cong A_p / pA_p \cong K_A(p)$ is an hyperfield.

\begin{theorem} \label{geoVon}
 Let $A$ be a von Neumann hyperring. The following are equivalent:
 
 \begin{enumerate}[i)]
  \item $A$ is a mono-hyperring.
  
  \item If $u \in A$ satisfies $u = 1$ in $K_A(p)$ for every $p \in \mbox{spec}(A)$, then $u=1$ em $A$.
  
  \item For every $e \in A$ idempotent, $e + e^c = \{1\}$.
 \end{enumerate}

 If any of the above conditions is valid, $A$ is geometric and $a \in b+c$ in $A$ if, and only if, $a \in b+c$ in 
 $K_A(p)$ for every $p \in \mbox{spec}(A)$. In particular, $a = b$ if, and only if, for all $p \in \mbox{spec}(A)$, $a = b$ in $K_A(p)$.
\end{theorem}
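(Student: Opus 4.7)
The strategy is to prove the three-way equivalence via the cycle (i) $\Rightarrow$ (ii) $\Rightarrow$ (iii) $\Rightarrow$ (i), then deduce the sheaf statement and the local-to-global principle for $a \in b + c$ from (iii). For (i) $\Rightarrow$ (ii), given $u$ with $u = 1$ in every $K_A(p) \cong A_p$, I would apply Remark \ref{fibertoOpen} to the relation $u \in 1 + 0$ to produce a finite cover $\mbox{spec}(A) = \bigcup D(e_i)$ with $u = 1$ already in each $A_{e_i}$, and then invoke the monopresheaf property. For (ii) $\Rightarrow$ (iii), pick any $u \in e + e^c$; in each hyperfield $K_A(p)$ the conditions $ee^c = 0$ and $e^c \in 1 - e$ force $\{e, e^c\} = \{0, 1\}$, so $u \in 0 + 1 = \{1\}$ stalkwise, and then globally by (ii), giving $e + e^c = \{1\}$.

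The content lies in (iii) $\Rightarrow$ (i), and the key tool is a \emph{partition-of-unity sub-claim}: for pairwise orthogonal idempotents $f_1, \ldots, f_n$ in $A$ with $\bigcup_i D(f_i) = D(h)$ (and $h$ idempotent), one has $h \in f_1 + \cdots + f_n$, and in fact $f_1 + \cdots + f_n = \{h\}$. I would prove this by induction on $n$. For $n = 2$, uniqueness of idempotents attached to clopens (Remark \ref{vNremarks}) gives $f_1 = h f_2^c$ and $h f_2 = f_2$, so multiplying the identity $1 \in f_2 + f_2^c$ (condition (iii)) by $h$ via half-distributivity yields $h \in f_1 + f_2$. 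The inductive step combines the $n = 2$ case applied to $(g, f_n)$, where $g$ is the idempotent for $\bigcup_{i < n} D(f_i)$, with the induction hypothesis for $f_1, \ldots, f_{n-1}$ via the associativity axiom for the multivalued sum. The singleton conclusion then follows by the same telescoping used in Lemma \ref{orth} (ii): multiply $f_k + f_k^c = \{1\}$ by $h \prod_{j < k} f_j^c$ using full hyperring distributivity, and collapse step by step.

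To verify the monopresheaf property, take a cover $D(e) = \bigcup D(e_i)$ with $\mathcal{F}_{e, e_i}(a) \in \mathcal{F}_{e, e_i}(b) + \mathcal{F}_{e, e_i}(c)$ in each $A_{e_i}$. Compactness of $D(e)$ yields a finite subcover; replacing each $e_i$ by $i(e_i) \cdot e$ we may assume the $e_i$ are idempotents below $e$. Pulling the local relations back to $A$ (so that $e_i a \in e_i b + e_i c$) and orthogonalizing via $f_i = e_i \prod_{j < i} e_j^c$ gives $f_i a \in f_i b + f_i c$. The sub-claim at $h = e$ produces $\sum_i f_i = \{e\}$, and hyperring distributivity then forces $\sum_i f_i b = \{b\}$ and $\sum_i f_i c = \{c\}$ in $A_e$; substituting the local relations yields
\[
a \in \sum_i f_i a \subseteq \sum_i (f_i b + f_i c) = \sum_i f_i b + \sum_i f_i c = b + c
\]
in $A_e$. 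The full sheaf property is parallel: lift compatible $x_i$ to $\hat x_i = f_i \hat x_i \in A$ and take any $x \in \hat x_1 + \cdots + \hat x_n$; orthogonality together with $0 + \cdots + \hat x_i + \cdots + 0 = \{\hat x_i\}$ forces $f_i x = \hat x_i$, and mono delivers uniqueness. The local-to-global statement for $a \in b + c$ is then immediate from Remark \ref{fibertoOpen} combined with the monopresheaf property.

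The main obstacle is the partition-of-unity sub-claim: because sums are multivalued, chained sum relations must be unwound via the associativity axiom rather than telescoped as equalities, and the final collapse of $f_1 + \cdots + f_n$ to a singleton requires pairing (iii) with the full (two-sided) hyperring distributivity so that multiplication by $\prod_{j \neq k} f_j^c$ can commute past the sum.
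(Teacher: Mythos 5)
Your proof is correct and follows essentially the same route as the paper's: the cycle (i) $\Rightarrow$ (ii) $\Rightarrow$ (iii) $\Rightarrow$ (i), with the weight carried in (iii) $\Rightarrow$ (i) by orthogonalizing the cover and collapsing the orthogonal sum of idempotents to a singleton, and your relativized partition-of-unity sub-claim is Lemma \ref{orth}(ii) localized to $D(h)$, proved by the same telescoping. The only packaging differences are that you treat a general $D(e)$ directly rather than doing $D(1)=\mbox{spec}(A)$ first and reducing, and that you pull the local membership relations back to $A$ through the idempotent/hyperring structure instead of via the ideal generated by $X=\{x\in A: ax\in bx+cx\}$ as the paper does; both converge on the same collapse step.
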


\begin{proof}
  $i) \Rightarrow ii)$: Let $u \in A$ with $u=1$ in $K_A(p)$ for all $p \in \mbox{spec}(A)$. By remark $\ref{fibertoOpen}$, exists $e_1, \ldots, e_n \in A$ with $\mbox{spec}(A) = \bigcup_{i=1}^n D(e_i)$ and $u = 1$ in $A_{e_i}$. Since $\mathcal{F}_A$ is a monopresheaf, it follows $u=1$.
  
  $ii) \Rightarrow iii)$: Immediate since any $u \in e + e^c$ is locally equal to $1$.
  
  $iii) \Rightarrow i)$:
    Assume that $\mbox{spec}(A) = \bigcup_{i=1}^n D(t_i)$ and $a \in b + c$ in $A_{t_i}$ for all $i=1, \ldots, n$. Consider the set $X = \{x \in A \colon ax \in bx + cx\}$.
    Then $AX \subseteq X$ and $I = \bigcup \{x_1 + \cdots + x_k \colon k \geq 1, x_i \in X \mbox{ for all } i=1, \ldots, k\}$ is the ideal generated by $X$. By hypothesis, $I$ is not contained in any
    prime ideal. Then $1 \in I$ and so exist $x_1, \ldots, x_k \in X$ such that $1 \in x_1 + \cdots + x_n$. Since exist $b_i \in A$ such that $b_ix_i$ is idempotents and $D(x_i) = D(b_ix_i)$,
    we can assume that exists $e_1, \ldots, e_k \in A$ idempotent such that
    
    \begin{enumerate}[.]
     \item $\mbox{spec}(A) = \bigcup_{i=1}^k D(e_i)$.
     \item $ae_i \in be_i + ce_i$ for all $i=1, \ldots, k$.
    \end{enumerate}

    Furtheremore, change $e_i$  by $e_i e_1^c \cdots e_{i-1}^c$, we can assume that $e_ie_j = 0$ if $i \neq j$. By Lemma $\ref{orth}$, we have $e_1 + \cdots + e_n = \{1\}$. Then
    
    \begin{align*}
     a \in ae_1 + \cdots + ae_k \subseteq b(e_1 + \cdots + e_k) + c (e_1 + \cdots + e_k) = b + c
    \end{align*}

  because $A$ is an hyperring.\\
  The general case follows easily by observing that given $t \in A$, $\mbox{spec}(A_t) \cong D(t) \subseteq \mbox{spec}(A)$ is a compact set.
  
  Now, assuming that $A$ is mono-hyperring, we prove that it is in fact geometric. Let $\{x_i\}_{i \in I}$, $\{e_i\}_{i \in I}$ family of elements of $A$ and $e \in A$ with $D(e) = \bigcup_{i \in I} D(e_i)$ and $x_i = x_j$ in $A_{e_ie_j}$ for all $i,j$. Since $D(e)$ is compact, we can assume $I = \{1, \ldots, n\}$ finite and $e, e_i \in \mbox{Id}(A)$. Furthermore, substituting $e_i$ by $e_i (e_1^c \cdots e_{i-1}^c)$, we can also assume that $e_ie_j = 0$ if $i \neq j$. Let $x \in x_1e_1 + \cdots + x_ne_n$. Then $xe_i = x_ie_i$ and so $x = x_i$ in $A_{e_i}$. 
\end{proof}

Let $A$ be an hyperring and $S \subseteq A$ a multiplicative set ($1 \in S$ and $S \cdot S \subseteq S$). Given $a,b \in A$, 
define $D_S (a,b) = \{x \in A \colon \mbox{ exists } u,v,s \in S \mbox{ such that } xu \in av+bs\}$.

\begin{definition}
 Let $A$ be a von Neumann hyperring and $S \subseteq A$. The set $S$ is called a von Neumann subgroup if
 
 \begin{enumerate}[$\textbf{.}$]
  \item $S$ is multiplicative.
  \item Given $a \in A$ idempotent, if $x \in D_S(a, a^c)$, then exist $s \in S$ such that $xs \in S$.
 \end{enumerate}

\end{definition}

\begin{theorem} \label{quoVN}
 Let $A$ be a von Neumann hyperring and $S \subseteq A$ a multiplicative set. Then $S$ is a von Neumann subgroup if, and only if, $A/_mS$ is a von Neumann geometric hyperring. In particular, $S$ is von Neumann subgroup if, and only if, $\overline{S}$ also is.
\end{theorem}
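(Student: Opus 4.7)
The plan is to prove the equivalence by matching the defining condition of a von Neumann subgroup with the geometricity criterion $e+e^c=\{1\}$ for idempotents of Theorem \ref{geoVon}(iii), applied to the quotient $A/_m S$. The starting observation, used in both directions, is that $A/_m S$ is automatically a von Neumann hyperring whenever $A$ is: by Proposition \ref{marshallQ}(ii) it is a hyperring, and if $a\in A$ satisfies $a=a^2b$ then $\pi(a)=\pi(a)^2\pi(b)$, so Proposition \ref{cVn} applies. Moreover, if $e\in A$ is idempotent, $\pi(e^c)\in 1-\pi(e)$ and $\pi(e)\pi(e^c)=0$, so by the uniqueness of idempotent complements (Remark \ref{vNremarks}) we have $\pi(e^c)=\pi(e)^c$.

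For the forward direction, assume $S$ is a von Neumann subgroup; I will verify condition (iii) of Theorem \ref{geoVon} for $A/_mS$. The first step is to lift an arbitrary idempotent $\bar e\in A/_mS$ to an idempotent of $A$: if $\pi(a)=\bar e$, choose $b$ with $a=a^2b$; then $ab\in A$ is idempotent, and $\pi(ab)=\pi(a)\pi(b)=\pi(a)^2\pi(b)=\pi(a)=\bar e$. Thus it suffices to show $\pi(e)+\pi(e)^c=\{1\}$ for every idempotent $e\in A$. Take $\bar y\in \pi(e)+\pi(e)^c=\pi(e)+\pi(e^c)$. Unfolding the multivalued sum in a Marshall quotient, there are $y'\in A$ with $\pi(y')=\bar y$ and $u,v,s\in S$ satisfying $y'u\in ev+e^c s$, i.e.\ $y'\in D_S(e,e^c)$. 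The hypothesis yields $s'\in S$ with $y's'\in S$, which means $\pi(y')=1$, so $\bar y=1$, as desired.

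For the reverse direction, suppose $A/_mS$ is a geometric von Neumann hyperring. Let $a\in A$ be idempotent and $x\in D_S(a,a^c)$, so that $xu\in av+a^c s$ for some $u,v,s\in S$. Applying $\pi$ (and using $\pi(u)=\pi(v)=\pi(s)=1$) gives $\pi(xu)\in \pi(a)+\pi(a)^c$; geometricity forces $\pi(a)+\pi(a)^c=\{1\}$, so $\pi(xu)=1$. Unfolding this equality in $A/_mS$ produces $s',t\in S$ with $(xu)s'=t$, hence $x(us')=t\in S$ with $us'\in S$. This is precisely the required conclusion, so $S$ is a von Neumann subgroup. Finally, the ``in particular'' statement is immediate from Proposition \ref{marshallQ}(i), which gives $A/_mS=A/_m\overline S$: applying the main equivalence to both $S$ and $\overline S$ shows each is a von Neumann subgroup iff this common quotient is geometric von Neumann.

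The main obstacle is essentially bookkeeping: making sure that every idempotent of $A/_mS$ comes from an idempotent of $A$, that $\pi(e^c)=\pi(e)^c$, and that the unfoldings of ``$\bar y\in\bar e+\bar e^c$'' and ``$\pi(xu)=1$'' in the Marshall quotient match the sets $D_S(a,a^c)$ and $\{x:\exists s\in S,\ xs\in S\}$ appearing in the definition. Once these translations are in place, both implications reduce to a single line each.
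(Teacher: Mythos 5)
Your proof is correct and takes essentially the same route as the paper's: both directions reduce to the criterion $e+e^c=\{1\}$ of Theorem \ref{geoVon}(iii), lift idempotents of $A/_mS$ to idempotents of $A$ (the paper uses $\pi(i(x))=i(\pi(x))$, you use the explicit $ab$ with $a=a^2b$), and unfold the Marshall-quotient sum to match it against $D_S(e,e^c)$ and the von Neumann subgroup condition. Your write-up is slightly more careful about the preliminary facts the paper leaves implicit (that $A/_mS$ is automatically a von Neumann hyperring, and that $\pi(e^c)=\pi(e)^c$), but the core argument is identical.
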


\begin{proof}
Let $\pi: A \to A/_mS$ be the natural projection. 

 $\Rightarrow:$ Since $A/_m S$ is von Neumann hyperring, by $\ref{geoVon}$ it is enough to prove that $x + x^c = \{1\}$ for all $x \in A/_mS$ idempotent. Let $x \in A$ such that $\pi(x)$ is an idempotent. Then $\pi(i(x)) = i(\pi(x)) = \pi(x)$. Thus given $\pi(z) \in \pi(x) + \pi(x)^c = \pi(i(x)) + \pi(i(x)^c)$, exists $u,v,w \in S$ such that $zu \in i(x)v + i(x)^cw$ and so $z \in D_S (i(x), i(x)^c)$. Since $S$ is a von Neumann subgroup, $\pi(z) = 1$. Therefore $A /_m S$ is a von Neumann geometric hyperring.\\
 $\Leftarrow$: Let $a \in A$ idempotent and take $x \in D_S(a,a^c)$. Then exists $s_1, s_2, s_3 \in S$ such that $xs_1 \in as_2 + a^cs_3$. Then $\pi (x) \in \pi(a) + \pi(a)^c$ but since
 $A/_mS$ is geometric, $\pi(x) = 1$ and so exist $s \in S$ such that $xs \in S$.
 
 The particular conclusion about $\overline{S}$ follows by Proposition $\ref{marshallQ}$.
\end{proof}

\begin{example} \label{nonGeometric}
The Marshall quotient usually not preserve geometric hyperrings. Let $A = \mathbb{R} \times \mathbb{R}$ and $S = \{1,(2,3),(2^2,3^2), \ldots\}$. Let $x = (2^2, 3)$ and $a = (1,0)$.
 Note that $a^2 = a$ is idempotent and $a^c = (0,1)$. Note yet that $x = a (2^2,3^2) + a^c (2,3)$ but there is no $s \in S$ such that $xs \in S$ and so $A/_mS$ is von Neumann hyperring
 which is not a mono-hyperring.
\end{example}

\begin{remark}
 If $A$ be a von Neumann GH and $S \subseteq \mbox{Id}(A)$ is a multiplicative set (or more generaly
 if $S$ is a multiplicative set such that for all $a,b \in S$ exist $c \in S$ such that $ac = bc$), then $A /_m S$ is also a geometric hyperring. If $A$ is a von Neumann
 hyperring and $S$ is a general multiplicative set it is possible to find the smaller $S'$ multiplicative such that $S \subseteq S'$ and $A /_m S'$ is geometric. This
 is the content of the next result.
\end{remark}

Let $A$ be a von Neumann hyperring. Consider the set

\begin{align*}
    S_u \defeq \bigcup \{e_1 + \cdots + e_n \colon \{e_1, \ldots, e_n\} \mbox{ is a partition of unity}\}.
\end{align*}

By Lemma $\ref{orth}$, it is a upward-union closed by multiplication.

\begin{corollary} \label{geoHullvN}
 Let $A$ be a von Neumann hyperring.
 Then $A/_m S_u$ is a von Neumann geometric hyperring and given a morphism $f \colon A \to B$ such that $B$ is a mono-multiring, exist unique map
 $\overline{f} \colon A/_m S_u \to B$ such that 
 
\[
  \begin{tikzcd}
    A \arrow{r} \arrow[swap]{dr}{f} & A /_m S_u \arrow{d}{\overline{f}} \\
     & B
  \end{tikzcd}
\]
 
 is a commutative diagram.
\end{corollary}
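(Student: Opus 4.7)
My plan is to reduce the corollary to Theorem \ref{quoVN} by showing that $S_u$ is a von Neumann subgroup of $A$; then $A/_m S_u$ is automatically a geometric von Neumann hyperring. The universal property then follows from the universal property of the Marshall quotient (Proposition \ref{marshallQ}(iii)), provided $f(S_u) = \{1\}$, which I establish through a local-to-global argument using the mono-multiring structure of $B$.

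For the von Neumann subgroup verification, multiplicativity is the easier piece: given $u \in e_1 + \cdots + e_n$ and $u' \in f_1 + \cdots + f_k$ with $\{e_i\}$ and $\{f_j\}$ partitions of unity, Lemma \ref{orth}(iii) supplies the partition of unity $\{e_i f_j\}_{i,j}$, and iterated half-distributivity shows $uu' \in \sum_{i,j} e_i f_j$. The more delicate piece is the second axiom: given an idempotent $a$ and $x \in D_{S_u}(a, a^c)$, so $xu \in av + a^c s$ for some $u,v,s \in S_u$, I would write $v \in f_1 + \cdots + f_m$ and $s \in g_1 + \cdots + g_l$ with $\{f_j\}$, $\{g_k\}$ partitions of unity. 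Half-distributivity yields $av \in af_1 + \cdots + af_m$ and $a^c s \in a^c g_1 + \cdots + a^c g_l$, and I claim that $\{af_j\}_j \cup \{a^c g_k\}_k$ is itself a partition of unity: pairwise orthogonality follows from $f_jf_{j'}=0$, $g_kg_{k'}=0$, and $aa^c=0$, while $1 \in a + a^c \subseteq \sum_j af_j + \sum_k a^c g_k$ because $a \in \sum_j af_j$ (from $1 \in \sum_j f_j$) and analogously $a^c \in \sum_k a^c g_k$. Consequently $xu \in S_u$, so $t = u$ satisfies the axiom.

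Once Theorem \ref{quoVN} gives $A/_m S_u$ as a geometric von Neumann hyperring, the universal property is the remaining content. For $f \colon A \to B$ with $B$ a mono-multiring, I need $f(u)=1$ for every $u \in e_1+\cdots+e_n \in S_u$. The images $\{f(e_i)\}$ still form a partition of unity in $B$ (orthogonal idempotents summing to $1$ since $f$ preserves the defining relations), so the basic opens $\{D(f(e_i))\}$ cover $\mbox{spec}(B)$. In each localization $B_{f(e_i)}$ the other $f(e_j)$'s vanish and $f(e_i)$ becomes $1$, forcing $f(u)=1$ locally; the monopresheaf property of $B$ then lifts this equality to the global level, after which Proposition \ref{marshallQ}(iii) produces the unique $\overline{f} \colon A/_m S_u \to B$ making the triangle commute.

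The main obstacle I expect is the second axiom of the von Neumann subgroup property: it forces one to merge two a priori independent partitions of unity (those coming from $v$ and from $s$) into a single partition containing $xu$, and the interplay between $a$, $a^c$ and the finer idempotents of $\{f_j\}, \{g_k\}$ is exactly what makes this combination possible. A secondary subtlety is the very last step of the universal property: the monopresheaf condition is stated for sections of the structural presheaf, so one must either invoke that $A/_m S_u$ (and typically $B$) is a hyperring, in which case $S_1^{-1}B \cong B$ and the lift is clean, or else simply observe that when $B$ is itself a mono von Neumann hyperring Theorem \ref{geoVon} together with Lemma \ref{orth}(ii) gives $\sum_i f(e_i) = \{1\}$ outright, making the global conclusion immediate.
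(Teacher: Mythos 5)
Your proposal follows the same route as the paper: show $S_u$ is a von Neumann subgroup (so Theorem \ref{quoVN} makes $A/_m S_u$ a GvNH), then verify $f(S_u)=\{1\}$ to invoke Proposition \ref{marshallQ}(iii). The only cosmetic difference in the subgroup verification is that you take separate partitions of unity $\{f_j\}$ for $v$ and $\{g_k\}$ for $s$ and merge them directly into $\{af_j\}\cup\{a^cg_k\}$, whereas the paper first uses the upward-closure of $S_u$ to dominate $s_1,s_2,s_3$ by a single partition $\{e_i\}$ before taking the $a$/$a^c$ split; both give a partition of unity containing $xu$ (resp.\ $xs_1$), so this is equivalent.

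Your secondary remark is a genuine observation: the paper's final step \emph{``since $\{f(e_i)\}$ is a partition of unity in $B$ and it is mono-multiring, we have $f(x)=1$''} implicitly requires passing from an identity in $S_1^{-1}B$ (which is what the monopresheaf condition delivers for the cover $\mbox{spec}(B)=\bigcup D(f(e_i))$) back to an identity in $B$, and the paper does not justify that $B\to S_1^{-1}B$ reflects equalities. Your two repairs are both sound: for $B$ a hyperring one has $S_1=B^{\times_w}=B^{\times}$ so $S_1^{-1}B\cong B$; and for $B$ a mono von Neumann hyperring, Theorem \ref{geoVon} plus Lemma \ref{orth}(ii) gives $\sum_i f(e_i)=\{1\}$ with no presheaf argument at all. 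This covers the intended applications (the reflection into $\textbf{GvNH}$), but strictly speaking the statement as worded for arbitrary mono-multirings $B$ needs one of these hypotheses or an analogous remark.
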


\begin{proof}
 In order to show that $A/_mS$ is geometric, we prove that $S$ is a von Neumann subgroup. Let $a \in A$ idempotent and take $x \in D_S(a,a^c)$. Then exists $s_1, s_2, s_3 \in S$ such that $xs_1 \in as_2 + a^cs_3$. Since $S_u$ is an upward-union, exist a partition of unity $\{e_1, \ldots, e_n\}$ such that
 $s_1,s_2,s_3 \in e_1 + \cdots + e_n$. Then $xs_1 \in ae_1 + \cdots + ae_n + a^ce_1 + \cdots a^c e_n \subseteq S$. Thus $S$ is a von Neumann subgroup.
 
 To conclude the desired universal property, we have to prove that for all map $f \colon A \to B$, $B$ mono-multiring, $f(S) = \{1\}$.
 Let $x \in S$ and $\{e_1, \ldots, e_n\}$ partition of unity with $x \in e_1 + \cdots + e_n$. Then $f(x) \in f(e_1) + \cdots + f(e_n)$. Since $\{f(e_1), \ldots, f(e_n)\}$ is a partition of unity in B and it is mono-multiring, we have $f(x) = 1$.
\end{proof}

Let $A$ be a geometric von Neumann hyperring and $a \in A$. Given $x,y \in a - a^c$, then $xi(a) = a = yi(a)$ and $xi(a)^c = -1 = yi(a)^c$. Thus $x \in xi(a) + xi(a)^c = yi(a) + yi(a)^c = \{y\}$. Therefore, we denote $\nabla(a)$ the unique element in $a - a^c$.

\begin{proposition} \label{nablai}
 Let $A$ be a geometric von Neumann hyperring and let $a \in A$.
 
 \begin{enumerate}[i)]
  \item $\nabla(a) \in A^{\times}$ and $a = i(a) \nabla(a)$.
  \item If $f \colon A \to B$ is a morphism of GvNH, given $a \in A$, then  $\nabla(f(a)) = f(\nabla(a))$.
  \item Given $x, y \in A$, $x = y$ if, and only if, $\nabla(i(x)) = \nabla(i(y))$ and $\nabla(x) = \nabla(y)$. In particular, if $e,f \in A$ are idempotents, then $\nabla(e)  = \nabla(f) \Leftrightarrow e = f$.
 \end{enumerate}

\end{proposition}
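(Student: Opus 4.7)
The plan is to handle the three parts in order, using throughout the characterization of $\nabla(a)$ as the unique element of $a - a^c$.

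For (i), I will first derive $a = i(a)\nabla(a)$ by multiplying the defining inclusion $\nabla(a) \in a - a^c$ by $i(a)$. Using $i(a) \cdot a = a$ (from $a = a^2b$ and $i(a) = ab$, see Remark \ref{vNremarks}) together with $i(a) \cdot a^c = i(a) \cdot i(a)^c = 0$, one obtains $i(a)\nabla(a) \in a - 0 = \{a\}$. For invertibility, Proposition \ref{sheaf2} combined with $A^{\times} = A^{\times_w}$ (since $A$ is a hyperring) reduces the problem to showing $\nabla(a) \notin p$ for every $p \in \mbox{spec}(A)$. I split on whether $i(a) \in p$. If $i(a) \in p$, then $a = a \cdot i(a) \in p$, while $a^c = i(a)^c \notin p$ (because $1 \in i(a) + i(a)^c$); rearranging $\nabla(a) \in a - a^c$ to $-a^c \in \nabla(a) - a$ would force $a^c \in p$ in case $\nabla(a) \in p$, a contradiction. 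If $i(a) \notin p$, then $a^c = i(a)^c \in p$ (from $i(a) \cdot i(a)^c = 0 \in p$ and primality), and rearranging to $a \in \nabla(a) + a^c$ would force $a \in p$, hence $i(a) \in p$, again contradictory.

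For (ii), apply $f$ to $\nabla(a) \in a - a^c$. Since $f$ preserves multivaluated sum inclusions and, by Remark \ref{vNremarks}, commutes with both $i$ and $(\cdot)^c$, one gets $f(\nabla(a)) \in f(a) - f(a)^c$. The uniqueness clause in the definition of $\nabla$ applied in the GvNH $B$ then gives $f(\nabla(a)) = \nabla(f(a))$.

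For (iii), the forward implication is immediate. For the converse, by (i) we have $x = i(x)\nabla(x)$ and $y = i(y)\nabla(y)$, so it suffices to deduce $i(x) = i(y)$ from $\nabla(i(x)) = \nabla(i(y))$; this is exactly the ``particular'' statement for idempotents. My route is Theorem \ref{geoVon}: it is enough to check $i(x) = i(y)$ after projecting to each hyperfield fiber $K_A(p)$. In any hyperfield the only idempotents are $0$ and $1$, and a direct computation gives $\nabla(0) \in 0 - 1 = \{-1\}$ and $\nabla(1) \in 1 - 0 = \{1\}$, so $\nabla$ separates idempotents in every fiber; hence $\nabla(i(\bar x)) = \nabla(i(\bar y))$ forces $i(\bar x) = i(\bar y)$ in each $K_A(p)$, and geometricity lifts this to $i(x) = i(y)$ in $A$. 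The main obstacle is precisely this idempotent step: purely global manipulation (computing $u \cdot e \cdot f^c$ two ways with $u = \nabla(e) = \nabla(f)$) only yields $e f^c = -e f^c$, and concluding $e f^c = 0$ seems to genuinely require passing to fibers via Theorem \ref{geoVon}.
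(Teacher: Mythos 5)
Your proposal is correct and follows essentially the same route as the paper's proof: in (i) the identity $a=i(a)\nabla(a)$ by multiplying the defining membership $\nabla(a)\in a-a^c$ by $i(a)$, and invertibility by showing $\nabla(a)$ lies outside every prime via the dichotomy $p\in D(a)$ versus $p\in D(a)^c=D(a^c)$; in (ii) preservation of the defining membership together with Remark \ref{vNremarks}; and in (iii) reduction to idempotents followed by the fiberwise computation $\nabla(1)=1$, $\nabla(0)=-1$ and geometricity (Theorem \ref{geoVon}). Your closing observation that a purely global manipulation yields only $ef^c=-ef^c$, so that passage to the stalks is genuinely needed, is a correct diagnosis of why the argument is organized this way.
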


\begin{proof}

\begin{enumerate}[i)]
 \item It is enough to observe that given $b \in A$ with $a = a^2b$, then $ab$ is an idempotent and $D(a) = D(ab)$.
 
 \item We know that $i(a)^c$ is defined to be the unique element $x$ such that $x \in 1 - i(a)$ and $xi(a)=0$ (remark $\ref{vNremarks}$). Then taking some $b \in A$ with $a = a^2b$, we know that
 $i(a) = ab$ and so $xi(a) = 0$ if, and only if, $xa = 0$.
 
 \item Since $\nabla(a) \in a - a^c$, given $p \in D(a)$, we have $a^c \in p$ and so $\nabla(a) \notin p$; if $p \in D(a)^c = D(a^c)$ we also conclude that $\nabla(a) \notin p$. Thus by Proposition $\ref{sheaf2}$ $\nabla(a) \in A^{\times}$. To see that $a = i(a) \nabla(a)$, note that since $\nabla(a) \in a - a^c$, we have $\nabla(a)i(a) \in a i(a) - a^ci(a) = \{ai(a)\} = \{a\}$.
 \item Since $\nabla(a) \in a - a^c$, we have $f(\nabla(x)) \in f(a) - f(a^c) = f(a) - f(a)^c$ and so $\nabla(f(a)) = f(\nabla(a))$.

 \item $\Rightarrow:$ Immediate.\\
      $\Leftarrow:$ Assume that $\nabla(i(x)) = \nabla(i(y))$ and $\nabla(x) = \nabla(y)$. We only have to prove that if $e,f \in A$ are idempotents with $\nabla(e) = \nabla(f)$, then $e = f$ because this imples in our case that $i(x) = i(y)$ and so $x = i(x) \nabla(x) = i(y) \nabla(y) = y$.
      So assume $\nabla(e) = \nabla(f), e,f \in \mbox{Id}(A)$. Since $A$ is geometric, it is enough to show that for all $p \in \mbox{spec}(A)$ $e = f$ in $K_A(p)$. Fixed a prime ideal $p$, if $e = 1$ in $K_A(p)$, then $1 = \nabla(e) = \nabla(f)$ and thus $f = 1 = e$ in $K_A(p)$; if $e = 0$ in $K_A(p)$, then $-1 = \nabla(e) = \nabla(f)$ and thus $f = 0 = e$ in $K_A(p)$.
\end{enumerate}
\end{proof}

\begin{proposition} \label{GvNHR}
 Let $A$ be a multiring. The following are equivalent:
 
 \begin{enumerate}[i)]
    \item $A$ is real reduced multiring and for each $a \in A$, exists $x \in A$ such that $ax = 0$ and $x \in 1 - a^2$.
 
     \item $A$ is real reduced hyperring.
     
     \item $A$ is a geometric von Neumann hyperring and for all $a \in A$, $1 + a^2 = \{1\}$.
 \end{enumerate}
\end{proposition}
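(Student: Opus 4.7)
The plan is to establish the three conditions as a cycle by proving the pair of equivalences $(i)\Leftrightarrow(ii)$ and $(ii)\Leftrightarrow(iii)$.

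For $(ii)\Rightarrow(i)$: in a real reduced hyperring $a^3 = a$, so by half-distributivity $0 \in a + (-a) = a \cdot a^2 + a \cdot (-1) = a \cdot (a^2 + (-1))$. The hyperring reverse-distribution axiom then produces $y \in a^2 + (-1)$ with $ay = 0$, and $x = -y$ is the element required by (i). The delicate converse $(i)\Rightarrow(ii)$ asks, given $z \in ba + ca$, for $y \in b + c$ with $ya = z$. My construction is to pick $x$ as supplied, any $y' \in b + c$ (the sum is non-empty by definition of multiring), and any $y \in az + xy'$. The half-distributive computation gives $ya \in a^2 z + axy' = a^2 z + 0 = \{a^2 z\}$; to see $a^2 z = z$, note that $\sigma(z) = 0$ whenever $\sigma(a) = 0$ (from $z \in ba + ca$), so $\sigma(a^2 z) = \sigma(z)$ at every $\sigma \in \mbox{sper}(A)$, and Corollary \ref{sepTheorem} gives $a^2 z = z$ and hence $ya = z$. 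For $y \in b + c$ I use the same separation criterion: if $\sigma(a) = 0$ then $\sigma(x) = 1$ (from $x \in 1 - a^2$), so $\sigma(y) = \sigma(y') \in \sigma(b) + \sigma(c)$; if $\sigma(a) \neq 0$ then $\sigma(a) \in \{\pm 1\}$ and $\sigma(x) = 0$ (from $ax = 0$), so $\sigma(y) = \sigma(a)\sigma(z) \in \sigma(a)^2(\sigma(b) + \sigma(c)) = \sigma(b) + \sigma(c)$; separation concludes.

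For $(ii)\Rightarrow(iii)$: $A$ is already a hyperring; separation in $\textbf{3}$ gives $1 + a^2 = \{1\}$ (since $1 + \sigma(a)^2 = \{1\}$ always); $A$ is von Neumann because $a = a^3 = a^2 \cdot a$; $\sqrt{0} = 0$ since $a^n = 0$ forces $\sigma(a) = 0$ for every $\sigma$; and geometricity follows from Theorem \ref{geoVon} by a case analysis on $\sigma(e) \in \{0,1\}$ for each idempotent $e$, which yields $\sigma(e + e^c) = \{1\}$ and hence $e + e^c = \{1\}$ by separation. For $(iii)\Rightarrow(ii)$: each fiber $K_A(p)$ is a hyperfield (remark preceding Theorem \ref{geoVon}) in which $1 + a^2 = \{1\}$. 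For $a \neq 0$, multiplying that equation by $a^{-2}$ gives $a^{-2} + 1 = \{a^{-2}\}$, while the equation itself with $a \mapsto a^{-1}$ gives $1 + a^{-2} = \{1\}$, hence $a^2 = 1$. Combined with $1 + 1 = \{1\}$, Corollary \ref{hyperfieldRR} makes $K_A(p)$ a real reduced hyperfield. The three axioms of Fact \ref{piIso} therefore hold in every $K_A(p)$, so they lift to $A$ by geometricity (Theorem \ref{geoVon}); semi-realness follows because $-1 \in \sum A^2$ would map to $-1 \in \sum K_A(p)^2 \subseteq \{0,1\}$, contradicting $-1 \neq 0,1$ in a nontrivial real reduced hyperfield.

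The main obstacle is the construction in $(i)\Rightarrow(ii)$. Morally, $x$ plays the role of an idempotent complementary to $a^2$, so that $xy'$ carries the sum on orders where $\sigma(a) = 0$ while $az$ carries it on orders where $\sigma(a) \neq 0$; but in the RRM setting one does not yet have the von Neumann geometry available and must assemble the expression $y \in az + xy'$ directly. Once this combination is written down, everything else in the proof reduces to separation via Corollary \ref{sepTheorem}.
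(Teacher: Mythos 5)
Your proof is correct but takes a genuinely different route from the paper's. The paper closes the cycle $i)\Rightarrow ii)\Rightarrow iii)\Rightarrow i)$, whereas you prove the two biimplications $i)\Leftrightarrow ii)$ and $ii)\Leftrightarrow iii)$. Your $i)\Rightarrow ii)$ coincides with the paper's (same witness $f\in lx + da$, same separation argument via Corollary~\ref{sepTheorem}). The interesting divergences are elsewhere. For $ii)\Rightarrow iii)$, the paper proves geometricity by translating to real semigroups and citing Theorem~III.1.1 of \cite{DP2}; you instead verify the criterion $e+e^c=\{1\}$ of Theorem~\ref{geoVon} directly by separation, which makes the proof self-contained -- a real improvement, since the external reference is to an unpublished preprint. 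Your $ii)\Rightarrow i)$ is a compact argument the paper never states: expanding $0\in a\cdot a^2 + a\cdot(-1)$ and applying the hyperring reverse-distribution axiom immediately produces the complementary element $x$, whereas the paper instead extracts this as a byproduct of its $iii)\Rightarrow i)$ step (noting $a^c$ does the job). Your $iii)\Rightarrow ii)$ works fiber-by-fiber -- showing each $K_A(p)$ is a real reduced hyperfield via Corollary~\ref{hyperfieldRR} and lifting the Fact~\ref{piIso} axioms by geometricity -- while the paper's $iii)\Rightarrow i)$ verifies those axioms in $A$ directly using the $\nabla$-$i$ decomposition and the identity $e+e^c=\{1\}$. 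Both are sound; the fiberwise route is perhaps more conceptual, the paper's more computational. One small remark shared by both proofs: semi-realness in the $iii)\Rightarrow$ direction implicitly requires $\mathrm{spec}(A)\neq\emptyset$ (equivalently $A\neq 0$); the paper glosses over this too, relying on the remark that the three piIso conditions plus $1\neq 0$ give semi-realness.
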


\begin{proof}
$i) \Rightarrow ii)$: Let $d \in ba + ca$. We have to prove that exists $f \in b + c$ such that $d = fa$. By hypothesis, exists $x \in 1 - a^2$ such that $ax = 0$. Let $l \in b + c$ and consider $f \in lx + da$. 

Using the Corollary $\ref{sepTheorem}$, the proof is completed if we prove that for all $\sigma \in \mbox{sper}(A)$, $\sigma (f) \in \sigma(b) + \sigma(c)$ and $\sigma(d) = \sigma(f)\sigma(a)$. Fix $\sigma \in \mbox{sper}(A)$. Note that if $\sigma(a) = 0$, then $\sigma(x) \in 1 - \sigma(a)^2 = \{1\}$ and if $\sigma(a) \neq 0$, $0 = \sigma(x)\sigma(a) = \sigma(x) \sigma(a)^2 = \sigma(x)$ and $\sigma(a)^2 = 1$.

\begin{itemize}
    \item $\sigma(f) \in \sigma(b) + \sigma(c)$.\\
    If $\sigma(a) = 0$, then $\sigma(x) = 1$ and $\sigma(f) \in \sigma(lx) + \sigma(da) = \{\sigma(l)\} \subseteq \sigma(b) + \sigma(c)$. If $\sigma(a) \neq 0$, then $\sigma(x) = 0$ and thus $\sigma(f) \in \sigma(lx) + \sigma(da) = \{\sigma(da)\} \in \sigma(b)\sigma(a)^2 + \sigma(c)\sigma(a)^2 = \sigma(b) + \sigma(c)$.
    
    \item $\sigma(d) = \sigma(fa)$. \\
    If $\sigma(a) = 0$, then $\sigma(d) \in \sigma(ba) + \sigma(ca) = \{0\}$ and thus $\sigma(d) = 0 = \sigma(fa)$. If $\sigma(a) \neq 0$, then $\sigma(x) = 1$ and $\sigma (fa) \in \sigma(lxa) + \sigma(da^2) = \{\sigma(d)\}$ because $ax = 0$.
\end{itemize}

$ii) \Rightarrow iii)$: Since $A$ is a RRM, then in the language of real semigroup $A$ is a Real Semigroup (see \cite{RRM}) and, by Theorem III.1.1 in \cite{DP2} $A$ is a geometric. On the other hand, by Proposition $\ref{cVn}$, $A$ is an von Neumann hyperring.

$iii) \Rightarrow i)$:
First we prove that $A$ is a real reduced multiring.

\begin{itemize}
 \item $\forall a \in A$, $a^3 = a$. \\
 By hypothesis, we have $1 + \nabla(a)^{-2} = \{1\}$. Thus 
 
 \begin{eqnarray*}
 \{\nabla(a)^2\} = \nabla(a)^2 (1 + \nabla(a)^{-2}) = 1 + \nabla(a)^2 = \{1\}.
 \end{eqnarray*}
 
 Thus $a^3 = (i(a) \nabla(a))^3 = i(a)\nabla(a) = a$. 
 
 \item $a + ab^2 = \{a\}$.\\
 Since $A$ is an hyperring, by hypothesis $a + ab^2 = a(1 + b^2) = \{a\}$.
 
 \item If $x,y \in a^2 + b^2$, then $x = y$. \\
 First note that, since $a^2,b^2 \in \mbox{Id}(A)$, we have $i(a) = a^2$. Note also that
 
 \begin{align*}
 xa^2,ya^2 \in & \  a^2 + b^2a^2 = \{a^2\} \\
 xa^c,ya^c \in & \ 0 + b^2a^c = \{b^2a^c\}.
 \end{align*}
 
 Thus $xa^2 = ya^2$, $xa^c  = ya^c$. Since $a^2 + a^c = \{1\}$, 
 
 \begin{eqnarray*}
 x \in x(a^2 + a^c) = xa^2 + xa^c = ya^2 + ya^c = y(a^2 + a^c) = \{y\}.
 \end{eqnarray*}
 
 \end{itemize}
 To complete the proof, note that given $a \in A$, the element $a^c$ satisfies $aa^c = 0$ and $a^c \in 1 - i(a) = 1 - a^2$.
\end{proof}

\begin{lemma}
Let $A$ be von Neumann hyperring and $T \subseteq A$ a proper preorder. Then $1 + T \subseteq A$ is a von Neumann subgroup.
\end{lemma}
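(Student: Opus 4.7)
My plan is to verify the two defining properties of a von Neumann subgroup: that $1+T$ is multiplicatively closed, and that for every idempotent $a \in A$ and every $x \in D_{1+T}(a,a^c)$ there is some $w \in 1+T$ with $xw \in 1+T$.

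Multiplicativity is routine: since $0 = 0^2 \in A^2 \subseteq T$, we have $1 \in 1+0 \subseteq 1+T$, and for $p \in 1+t_1$ and $q \in 1+t_2$ with $t_1,t_2 \in T$ the hyperring distributivity expands $pq \in (1+t_1)(1+t_2) \subseteq 1 + t_1 + t_2 + t_1 t_2$. Every element of $t_1+t_2+t_1t_2$ lies in $T$ by the closure properties $T+T\subseteq T$ and $T\cdot T \subseteq T$, so $pq \in 1+T$.

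For the absorption condition, take witnesses $u,v,s \in 1+T$ with $xu \in av + a^c s$, and fix $t_1, t_2 \in T$ such that $v \in 1+t_1$ and $s \in 1+t_2$. The central observation is that $a = a^2 \in A^2 \subseteq T$ and likewise $a^c \in T$, so that $at_1, a^c t_2 \in T \cdot T \subseteq T$; half-distributivity then yields $av \in a + at_1$ and $a^c s \in a^c + a^c t_2$. Iterated application of the multiring associativity axiom repositions the four-fold hypersum, producing a decomposition $xu \in k + r$ with $k \in a+a^c$ and $r \in at_1 + a^c t_2 \subseteq T$.

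The core difficulty is to pass from this representation to $xw \in 1+T$ for some $w \in 1+T$. When $A$ is geometric, Theorem~$\ref{geoVon}$ forces $a + a^c = \{1\}$, hence $k=1$ and the choice $w = u$ already gives $xu \in 1 + r \subseteq 1+T$, completing the argument. For the general vNH case one uses that $ka = a$ and $ka^c = a^c$ together with $D(a) \cup D(a^c) = \mathrm{spec}(A)$ to conclude $D(k) = \mathrm{spec}(A)$, so $k \in A^\times$ (and by symmetry $k^{-1} \in a + a^c$ as well); the plan is to build $w$ by correcting $u$ with a representative of $k^{-1}$ lying in $1+T$, using that $1+T \subseteq T$ and that the local-on-each-fiber behavior of $k$ produces an element of $1-k$ inside $kT$. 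This construction of the correcting factor is the technical heart of the argument and the step I expect to require the most care.
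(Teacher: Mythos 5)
You reduce the problem, exactly as the paper does, to producing some $t'\in 1+T$ with $kt'\in 1+T$, where $k$ is an arbitrary element of $a+a^c$; and you correctly observe that $k$ is invertible. But that reduction is where your argument stops: the passage ``the plan is to build $w$ by correcting $u$ with a representative of $k^{-1}$ lying in $1+T$ \dots this construction \dots is the step I expect to require the most care'' is not a proof, and the heuristic you offer (finding an element of $1-k$ inside $kT$) is not obviously achievable --- it amounts to asking that $(k^{-1}-1)\cap T\neq\emptyset$, which does not follow from anything you have established. Invertibility of $k$ alone does not give you what you need; the preorder structure must enter.

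The missing observation, which makes the paper's proof short, is a membership computation rather than a spectral one: since $a$ and $a^c$ are idempotents, $a=a^2$ and $a^c=(a^c)^2$ lie in $A^2\subseteq T$, hence $k\in a+a^c\subseteq T+T\subseteq T$; and then $k^{-1}=k^{-2}\cdot k\in A^2\cdot T\subseteq T$. Now use the strong (hyperring) distributivity: $(1+k^{-1})\cdot k = k+1$. Since $k^{-1}\in T$, any $t'\in 1+k^{-1}$ lies in $1+T$, and $kt'\in 1+k\subseteq 1+T$ because $k\in T$. Plugging this into your decomposition $xu\in k+r$ with $r\in at_1+a^ct_2\subseteq T$ gives $xut'\in kt'+rt'\subseteq (1+T)+T\subseteq 1+T$, with $ut'\in 1+T$ by multiplicativity, which is exactly the absorption condition. (Your side remark about the geometric case is correct but unnecessary: the argument above handles the general von Neumann hyperring directly, so there is no need to split cases.)
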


\begin{proof}
Let $a \in A$ be an idempotent and $u \in a + a^c$. First note that by Proposition $\ref{sheaf2},iii)$ $u$ is invertible. Observing that $u \in \sum A^2 \subseteq T$ by hypothesis and $u^{-1} = u^{-2} u \in T$, the expression $(1 + u^{-1}) u = 1 + u$ assures the existence of $t \in 1 + T$ such that $ut \in 1 + T$.

Let $x \in D_{1 + T} (a, a^c)$. Then exists $s, u, v \in 1 + T$ such that $xs \in au + va^c$. Taking $u',v' \in T$ such that $u \in 1 + u', v \in 1 + v'$, we have
 
 \begin{align*}
 xs \in au + a^cv \subseteq (a + au') + (a^c + a^cv') &= (a + a^c) + (au' + a^cv'). \\
 \end{align*}
 
 Then exists $t' \in T$ such that $xst' \in t' + au't' + a^cv't' \in 1 + T$.
 Thus $1 + T \subseteq A$ is von Neumann subgroup. 
\end{proof}

\begin{theorem} \label{reprevN}
 Let $A$ be von Neumann hyperring and $T \subseteq A$ a proper preorder. Then the natural projection $\pi \colon A \to Q_T(A)$ induces an isomorphism $Q_T(A) \cong A /_m (1 + T)$.
\end{theorem}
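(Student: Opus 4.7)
The overall strategy is to set $B := A/_m(1+T)$, construct a surjective morphism $\bar{\pi}\colon B \to Q_T(A)$ via the universal property of the Marshall quotient, and show it is an isomorphism using Lemma \ref{isoRS}. For every $\sigma \in \mbox{sper}_T(A)$ one has $\sigma(t) \in \{0,1\}$ for $t \in T$, hence $\sigma(1+t) \subseteq 1 + \sigma(t) = \{1\}$; thus $\pi_{(A,T)}(1+T) = \{1\}$ in $Q_T(A)$, and Proposition \ref{marshallQ} iii) delivers the unique multiring morphism $\bar{\pi}\colon B \to Q_T(A)$ with $\bar{\pi} \circ \pi_{1+T} = \pi_{(A,T)}$, necessarily surjective because $\pi_{(A,T)}$ is.

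I next verify that $B$ is a real reduced hyperring. By the preceding lemma $1+T$ is a von Neumann subgroup, so Theorem \ref{quoVN} ensures $B$ is a geometric von Neumann hyperring. I then claim $1 + \bar{t} = \{1\}$ in $B$ for every $t \in T$: any $\bar{x} \in 1 + \bar{t}$ arises from $xs_1 \in s_2 + t s_3$ with $s_1,s_2,s_3 \in 1+T$; writing $s_2 \in 1 + t_2$ for some $t_2 \in T$ and using that $ts_3 \in T$ (half-distributivity together with $T \cdot T, T+T \subseteq T$), the associativity axiom produces $h \in t_2 + ts_3 \subseteq T$ with $xs_1 \in 1+h \subseteq 1+T$, whence $x \sim_{1+T} 1$ and $\bar{x} = 1$. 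In particular $1 + \bar{a}^2 = \{1\}$ for every $\bar{a} \in B$, and Proposition \ref{GvNHR} gives that $B$ is a real reduced hyperring.

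The main obstacle is showing that the induced map $\mbox{sper}(\bar{\pi})\colon \mbox{sper}(Q_T(A)) \to \mbox{sper}(B)$ is surjective. Given $\tau \in \mbox{sper}(B)$, set $\sigma := \tau \circ \pi_{1+T} \in \mbox{sper}(A)$; in view of Corollary \ref{sper(Q(A))} what is needed is $\sigma \in \mbox{sper}_T(A)$, i.e.\ $\sigma(t) \in \{0,1\}$ for every $t \in T$. Let $p := \sigma^{-1}(0)$; since $A$ is von Neumann $p$ is maximal, so $A/p$ is a hyperfield and $\sigma$ factors as $\sigma_p\colon A/p \to \mathbf{3}$. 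Suppose for contradiction that $\sigma(t) = -1$ for some $t \in T$: then $\bar{t}$ is invertible in $A/p$, so picking $c \in A$ with $\bar{c} = \bar{t}^{-1}$ and setting $t' := t c^2 \in T$ gives $\bar{t'} = \bar{t}^{-1}$ and $\sigma(t') = -1$. In the hyperfield $A/p$ the identity $1 + \bar{t} = \bar{t}\,(1 + \bar{t'})$ shows that every $\bar{y} \in 1 + \bar{t}$ has the form $\bar{t}\bar{z}$ for some $\bar{z} \in 1 + \bar{t'}$; the hypothesis $\sigma(1+T) = \{1\}$ applied to $t$ forces $\sigma_p(\bar{y}) = 1$ and hence $\sigma_p(\bar{z}) = -1$. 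Lifting $\bar{z}$ through Proposition \ref{idealq} to some $z \in 1 + t' \subseteq 1+T$ gives $\sigma(z) = -1$, contradicting $\sigma(1+T) = \{1\}$. Therefore $\sigma(T) \subseteq \{0,1\}$.

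With sper surjectivity established and both $B$ and $Q_T(A)$ real reduced multirings, Lemma \ref{isoRS} applied to the surjective morphism $\bar{\pi}\colon B \to Q_T(A)$ yields the desired isomorphism $Q_T(A) \cong A/_m(1+T)$ induced by $\pi$.
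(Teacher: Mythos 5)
Your proof is correct, and its first half — establishing that $B := A/_m(1+T)$ is a real reduced hyperring (von Neumann subgroup $\Rightarrow$ GvNH via Theorem \ref{quoVN}, then $1 + \bar a^2 = \{1\}$ via Proposition \ref{GvNHR}) — coincides with what the paper does, though you prove the slightly stronger and cleaner fact $1 + \bar t = \{1\}$ for all $t \in T$ rather than only for $t = a^2$. The final step, however, takes a genuinely different route. The paper finishes in two lines by showing that $\rho\colon A \to B$ satisfies the same universal property as $\pi\colon A \to Q_T(A)$ from Theorem \ref{uniProMRR}: any p-morphism $f\colon (A,T) \to (R,\sum R^2)$ into a RRM satisfies $f(1+T) \subseteq 1 + \operatorname{Id}(R) = \{1\}$, so it factors uniquely through $\rho$ by Proposition \ref{marshallQ}, and uniqueness of universal objects gives the isomorphism. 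You instead build the induced surjection $\bar\pi\colon B \to Q_T(A)$ and invoke Lemma \ref{isoRS}, which forces you to prove surjectivity of $\operatorname{sper}(\bar\pi)$ — that is, that every $\sigma \in \operatorname{sper}(A)$ with $\sigma(1+T) = \{1\}$ already satisfies $\sigma(T) \subseteq \{0,1\}$. Your proof of this (passing to the residue hyperfield $A/p$ using the von Neumann hypothesis so that $p = \sigma^{-1}(0)$ is maximal, constructing the inverse $\bar t' = \bar t^{-1}$ inside $T$, exploiting the hyperring identity $1+\bar t = \bar t(1+\bar t')$, and lifting via Proposition \ref{idealq}) is correct and is a real piece of content, but it is work that the paper's universal-property argument simply sidesteps. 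Both routes work; yours trades two lines of adjunction abstract nonsense for a concrete order-theoretic computation, and in exchange makes explicit the otherwise implicit fact that on a von Neumann hyperring the conditions $\sigma(1+T) = \{1\}$ and $\sigma(T) \subseteq \{0,1\}$ on an order are equivalent.
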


\begin{proof}
 Let $\rho \colon A \to A/_m (1 + T)$ be the natural projection. By the preceding Lemma, $1 + T \subseteq A$ is von Neumann subgroup and thus by Theorem $\ref{quoVN}$ $A/_m (1 + T)$ is GvNH. On the other hand, given $x,a \in A$ such that $\rho(x) \in 1 + \rho(a)^2$ in $A/_m (1 + T)$, exists $t_1,t_2,t_3 \in 1 + T$ with $xt_1 \in t_2 + a^2t_3 \subseteq 1 + T$ and thus $\rho(x) = 1$. Therefore, by Proposition $\ref{GvNHR}$, the hyperring $A /_m (1 + T)$ is RRM. To see that $Q_T(A) \cong A /_m (1 + T)$ we will prove that $\rho \colon A \to A/_m (1 + T)$ satisfies the same universal property of $\pi \colon A \to Q_T(A)$ (Theorem $\ref{uniProMRR}$). Let $f \colon A \to R$ a multiring morphism with $R$ RRM such that $f(T) \subseteq \sum R^2 = R^2 = Id(R)$. Then $f(1 + T) \subseteq 1 + Id(R) = \{1\}$ and thus by Proposition $\ref{marshallQ}$ exists unique $\overline{f} \colon A/_m (1 + T) \to R$ such that $f = \overline{f} \circ \rho$, as desired.
\end{proof}

As the last result of this section, we analyse some constructions that $Q \colon \textbf{pvNH} \to \textbf{RRM}$ preserve in order to deduce a logical-preservation result (Theorem $\ref{theEleE}$).

Let $\langle M_i \colon \{\mu_{i,j} \colon i \leq j \mbox{ in } I\}\rangle$ be an inductive system of multrings and $\mu_i \colon M_i \to \varinjlim \mathcal{M}$ the inclusions morphisms. For each $i \in I$, let $S_i \subseteq M_i$ be a subset. We denote by $\varinjlim S_i \defeq \bigcup_{i \in I} \mu_i(S_i)$. If each $S_i \subseteq A_i$ is multiplicative and $\mu_{i,j} (S_i) \subseteq S_j$ when $i \leq j$ in $I$, then $\langle M_i/_m S_i \colon \{\overline{\mu_{i,j}} \colon i \leq j \mbox{ in } I\}\rangle$ is also an inductive system, where $\overline{\mu_{i,j}} \colon M_i/_m S_i \to M_j /_m S_j$ is the induced map.

\begin{lemma} \label{marshalQl}
\begin{enumerate}[i)]
    \item 
Let $\langle M_i \colon \{\mu_{i,j} \colon i \leq j \mbox{ in } I\}\rangle$ be an inductive system of multrings and $S_i \subseteq M_i$ multiplicative set for each $i \in I$ with $\mu_{i,j} (S_i) \subseteq S_j$ when $i \leq j$ in $I$. Then $\varinjlim (M_i/S_i) \cong (\varinjlim M_i) /_m \varinjlim S_i$.

\item Let $\{A_i\}_{i \in I}$ be a family of multirings and $S_i \subseteq A_i$ a multiplicative set for each $i \in I$. Then $\prod_{i \in I} (A_i /_m S_i) \cong (\prod_{i \in I} A_i) /_m (\prod_{i \in I} S_i)$.

\end{enumerate}
\end{lemma}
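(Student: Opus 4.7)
The plan is to apply the criterion of Proposition~\ref{filtColim} to the candidate dual cone with vertex $(\varinjlim_j M_j)/_m \varinjlim_j S_j$. For each $i$, the composition of $\mu_i \colon M_i \to \varinjlim_j M_j$ with the canonical projection $q \colon \varinjlim_j M_j \to (\varinjlim_j M_j)/_m \varinjlim_j S_j$ sends $S_i$ into $\varinjlim_j S_j$, hence into $\{1\}$; by Proposition~\ref{marshallQ}(iii) it factors uniquely through a morphism $\nu_i \colon M_i /_m S_i \to (\varinjlim_j M_j)/_m \varinjlim_j S_j$. Uniqueness forces compatibility with the transitions $\overline{\mu_{i,j}}$, yielding a dual cone. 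Condition (a) of Proposition~\ref{filtColim} is immediate, since $\varinjlim_j M_j = \bigcup_i \mu_i(M_i)$ and $q$ is surjective.

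For condition (b) it suffices to check the atomic formulas of the language $\{\pi, \cdot, -, 0, 1\}$, which reduce to equalities and the ternary sum relation $\pi$. Suppose $\nu_i([a]) = \nu_i([b])$ for $a, b \in M_i$; then there exist $s, t \in \varinjlim_j S_j$ with $\mu_i(a)\,s = \mu_i(b)\,t$ in $\varinjlim_j M_j$. Writing $s = \mu_j(s')$, $t = \mu_k(t')$ with $s' \in S_j$, $t' \in S_k$, I pass to a common upper bound $\ell \geq i, j, k$ and re-express the equation in terms of elements originating in $M_\ell$; applying Proposition~\ref{filtColim}(b) to this equation produces some $\ell' \geq \ell$ for which $\mu_{i,\ell'}(a)\,\mu_{j,\ell'}(s') = \mu_{i,\ell'}(b)\,\mu_{k,\ell'}(t')$ holds in $M_{\ell'}$. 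Since $\mu_{j,\ell'}(s'), \mu_{k,\ell'}(t') \in S_{\ell'}$, this is exactly $\overline{\mu_{i,\ell'}}([a]) = \overline{\mu_{i,\ell'}}([b])$ in $M_{\ell'}/_m S_{\ell'}$. The argument for $\pi$ is identical in structure, using three witnesses from $\varinjlim_j S_j$ in place of two.

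\textbf{Part (ii).} Define $\phi \colon (\prod_i A_i)/_m \prod_i S_i \to \prod_i (A_i/_m S_i)$ by $\phi([(a_i)_i]) = ([a_i])_i$. Well-definedness, the morphism axioms, and surjectivity are all immediate because both the product operations and the equivalence $\sim_{\prod_i S_i}$ are defined coordinatewise. Injectivity: if $[a_i] = [b_i]$ in each $A_i /_m S_i$, pick $s_i, t_i \in S_i$ with $a_i s_i = b_i t_i$; the tuples $(s_i), (t_i) \in \prod_i S_i$ witness $[(a_i)] = [(b_i)]$. Reflection of $\pi$ is analogous: coordinatewise witnesses $s_i, t_i, u_i \in S_i$ for $a_i s_i \in b_i t_i + c_i u_i$ gather into global witnesses in $\prod_i S_i$. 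The only subtle point in the whole proof lies in (i): Proposition~\ref{filtColim}(b) demands that all arguments of the atomic formula originate at a single index, so one must first use directedness to replace $s, t$ (and the third witness, in the case of $\pi$) by their images at a common upper bound before invoking the criterion. Everything else is a formal application of the universal properties of the Marshall quotient and the filtered colimit.
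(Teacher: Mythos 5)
The paper delegates this lemma entirely to the reference \cite{RRM} and gives no in-text proof, so there is no internal argument to compare against. Your proof is correct and is the natural route given the tools the paper sets up: you build the dual cone $\nu_i \colon M_i/_m S_i \to (\varinjlim M_j)/_m \varinjlim S_j$ by the universal property of the Marshall quotient (Proposition~\ref{marshallQ}(iii)), then verify conditions (a) and (b) of Proposition~\ref{filtColim}. Condition (a) is indeed immediate from surjectivity of $q$ and $\varinjlim M_j = \bigcup_i \mu_i(M_i)$. For (b), the reduction of atomic formulas to equalities and $\pi$-atoms between $\nu_i$-images is justified because morphisms commute with the function symbols, so you can evaluate the terms first. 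You correctly flag the one genuinely nontrivial point: the Marshall witnesses $s,t$ (and a third witness $u$ for the $\pi$-atom) live at various indices of $\varinjlim S_j$, and one must first push $a,b,(c)$, $s',t',(u')$ to a common upper bound $\ell$ before applying Proposition~\ref{filtColim}(b) to the resulting single-index atomic statement; the output $\ell' \geq \ell$ then supplies witnesses in $S_{\ell'}$. Part (ii) is a routine coordinatewise check and is handled cleanly, including the reflection of $\pi$ via coordinatewise witnesses assembled into a global tuple in $\prod_i S_i$. No gaps.
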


\begin{proof}
The proof can be found in \cite{RRM}.
\end{proof}

Let $\mathcal{L}_{pMulti} = \mathcal{L}_{multi} \cup \{T\} = \{\pi, \cdot, -, 0, 1, T\}$ the language of p-multirings where $T$ is a unary relation. Note that the pre-order axioms are geometric and thus exist inductive limits in $\textbf{pMulti}$. Consider the category $\textbf{pvNH}$ as faithfully subcategory of $\textbf{pMulti}$ of pairs $(A,T)$ where $A$ is a von Neumann hyperring.

 Let $\{(A_i, T_i)\}_{i \in I}$ be a family of p-multirings. Then $(\prod_{i \in I} A_i,\prod_{i \in I}T_i)$ is the product of the family $\{(A_i, T_i)\}_{i \in I}$ in the category $\textbf{pMulti}$. Since arbitrary product of von Neumann hyperring is again vNH and $\textbf{pvNH}$ is faithfully subcategory of $\textbf{pMulti}$, if the original family $\{(A_i, T_i)_{i \in I}\}$ is in $\textbf{pvNH}$, then $(\prod_{i \in I} A_i, \prod_{i \in I} T_i)$ is the product in $\textbf{pvNH}$.

Another useful construction in $\textbf{pMulti}$ is inductive limit. Let $\langle (A_i,T_i) \colon \{\mu_{i,j} \colon i \leq j \mbox{ in } I \}\rangle$ be an inductive system of p-multirings. Then the $\varinjlim_{i \in I} (A_i,T_i)$ exists in $\textbf{pMulti}$ because the first-order axioms of p-multrings are geometric. Furtheremore, we have $\varinjlim (A_i, T_i) \cong (\varinjlim A_i, \varinjlim T_i)$. As in product case, the same considerations abou inductive limits in $\textbf{pMulti}$ holds in $\textbf{pvNH}$.

From now on we prove that the functor $Q \colon \textbf{pvNH} \to \textbf{RRM}$ preserves elementar equivalence (two structures are elementar equivalent if they satisfies the same first-order sentences). We need the classical Keisler-Shelah Theorem that gives an algebraic characterization of elementar equivalence in terms of ultrapowers.

\begin{fact}[Keisler-Shelah Theorem's]
Let $\mathcal{L}$ be a first-order language and $A,B$ two models of $\mathcal{L}$. Then $A \equiv B$ if, and only if, exists set $I$ and ultrafilter $U$ over $I$ such that $\prod_{i \in I} A / U \cong \prod_{i \in I} B / U$.
\end{fact}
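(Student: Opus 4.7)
The plan is to handle the two directions separately, with the nontrivial content lying entirely in one of them. For the $(\Leftarrow)$ direction I would invoke \L{}o\'s's theorem: for any first-order sentence $\varphi$ in $\mathcal{L}$ and any ultrafilter $U$ on an index set $I$, one has $\prod_{i \in I} A / U \models \varphi$ iff $\{i \in I : A \models \varphi\} \in U$, and this set is either $I$ or $\emptyset$. Hence $A \equiv \prod_{i \in I} A / U$ and similarly $B \equiv \prod_{i \in I} B / U$; combined with the hypothesized isomorphism of the ultrapowers, this yields $A \equiv B$. This is a few lines once \L{}o\'s's theorem is available.

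For the $(\Rightarrow)$ direction, which is the substantive content, I would proceed in three steps. First, recall the classical back-and-forth principle: if $M$ and $N$ are elementarily equivalent, $\kappa$-saturated, of cardinality $\kappa$, and the language has cardinality $<\kappa$, then $M \cong N$. Second, show that for a suitably chosen index set $I$ of large cardinality $\kappa$ and a suitably chosen ultrafilter $U$ on $I$, the ultrapowers $A^I/U$ and $B^I/U$ are both $\kappa^+$-saturated and of cardinality $\kappa^+$. Third, combine the previous two steps: by the easy direction above, $A^I/U \equiv A \equiv B \equiv B^I/U$, so the back-and-forth argument applied to these saturated models of the same cardinality produces the required isomorphism $A^I/U \cong B^I/U$.

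The principal obstacle is the second step: producing an ultrafilter whose ultrapowers are highly saturated without assuming GCH. This is the technical heart of the theorem and is due to Shelah. The required objects are the \emph{good ultrafilters} of Keisler, which can be built by a transfinite recursion of length $2^\kappa$ on $I = \kappa$, at each stage extending a filter base so as to realize a prescribed multiplicative refinement of a given decreasing family of sets; the combinatorial difficulty is showing that such refinements can always be found while keeping the accumulated filter proper, and that the limiting ultrafilter induces enough saturation via the Keisler--Kunen characterization. Once good ultrafilters on $\kappa$ are in hand, a cardinality bookkeeping argument (possibly iterating ultrapowers or passing to $|A|+|B|+\aleph_0$ as a parameter) ensures the two ultrapowers land in the same cardinality bracket, and the proof concludes. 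Given the depth of the good-ultrafilter construction, in the paper it is appropriate to cite this as a fact rather than to reprove it.
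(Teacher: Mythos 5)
The paper does not actually prove this statement: the ``proof'' is a single line citing the standard reference (Chang--Keisler, \cite{CK}), which is also what you recommend doing at the end of your sketch, so at the level of what appears in the paper you and the authors agree.

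Your outline of the classical argument is mostly right but contains one real imprecision in the ($\Rightarrow$) direction. The ($\Leftarrow$) direction via \L{}o\'s's theorem is exactly correct. For ($\Rightarrow$), what you describe --- build a good ultrafilter $U$ on $\kappa$, obtain $\kappa^+$-saturated ultrapowers of cardinality $\kappa^+$, and conclude by the uniqueness of saturated models --- is Keisler's original 1961 proof, and it genuinely needs $2^\kappa = \kappa^+$, because that is what forces the $\kappa^+$-saturated ultrapower (which a priori has size up to $2^\kappa$) to actually have cardinality $\kappa^+$. Your suggestion that one can still reach ``highly saturated ultrapowers of the same cardinality'' without GCH by cardinality bookkeeping or by iterating the construction does not work; without GCH there is no way to make a $\kappa^+$-saturated ultrapower land at $\kappa^+$. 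Shelah's 1971 theorem removes GCH by abandoning the ``saturated models of equal cardinality are isomorphic'' strategy entirely: he constructs the isomorphism between the two ultrapowers directly, by a much more intricate combinatorial back-and-forth over a carefully built ultrafilter, never passing through full saturation. So your identification of the two ingredients (\L{}o\'s and good ultrafilters) is on target and your decision to cite rather than reprove matches the paper, but the account of how Shelah eliminates GCH misstates the actual mechanism. Since the paper only uses this result once, as a black box in the proof of Theorem \ref{theEleE}, the citation-only treatment you both adopt is the right call.
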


\begin{proof}
The proof can be found in \cite{CK}.
\end{proof}

\begin{remark} \label{ultrap}
 Let $\mathcal{L}$ be a first-order language and $\{A_i\}_{i \in I}$ a family of $\mathcal{L}$-models. If $U$ is an ultrafilter over $I$, then
the ultraproduct $\prod_{i \in I} A_i / U$ is an inductive limit of products. More precisely, define $\mathcal{F} \colon U \to \mathcal{L}-\mbox{models}$ by $\mathcal{F}(A) = \prod_{i \in A} A_i$ and if $A \subseteq B$ in $U$, $\mathcal{F}_{B,A} \colon \mathcal{F}(B) \to \mathcal{F}(A)$ is the natural projection. Then $\mathcal{F}$ is a contravariant functor over a left-directed set and $\varinjlim \mathcal{F} \cong \prod_{i \in I} A_i / U$ (see more details in Propostion $\ref{filtColim}$ or \cite{CK}). 
\end{remark}

\begin{theorem} \label{theEleE}
  Let $(A,T),(B,P)$ be von Neumann p-hyperrings. If $(A,T) \equiv_{\mathcal{L}_{pMulti}} (B,P)$, then $Q_T(A) \equiv_{\mathcal{L}_{multi}} Q_P(B)$.
\end{theorem}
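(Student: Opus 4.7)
The plan is to invoke the Keisler-Shelah theorem to convert elementary equivalence into isomorphism of ultrapowers, and then to show that the functor $Q$ commutes with ultrapowers on von Neumann p-hyperrings. First, by Keisler-Shelah applied to $(A,T) \equiv_{\mathcal{L}_{pMulti}} (B,P)$, I obtain an index set $I$ and an ultrafilter $U$ on $I$ with an $\mathcal{L}_{pMulti}$-isomorphism $(A,T)^{I}/U \cong (B,P)^{I}/U$. Since the vNH axiom $\forall a\,\exists b\,(a = a^{2}b)$, the hyperring axiom, and the proper preorder axioms are all first-order, \L{}o\'{s}'s theorem guarantees that both ultrapowers are again von Neumann p-hyperrings; denote the induced preorders by $T^{*}$ and $P^{*}$. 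Applying the functor $Q$ (Theorem $\ref{uniProMRR}$) to this isomorphism yields an isomorphism $Q_{T^{*}}(A^{I}/U) \cong Q_{P^{*}}(B^{I}/U)$ in $\textbf{RRM}$.

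The central step is to establish a commutation of $Q$ with ultrapowers, namely $Q_{T^{*}}(A^{I}/U) \cong Q_{T}(A)^{I}/U$ (and symmetrically for $B$). By Theorem $\ref{reprevN}$, for a vNH with proper preorder one has $Q_{T}(A) \cong A/_{m}(1+T)$, so $Q_{T^{*}}(A^{I}/U) \cong (A^{I}/U)/_{m}(1 + T^{*})$. By Remark $\ref{ultrap}$ the ultrapower is the directed colimit $\varinjlim_{J \in U}\prod_{i \in J} A$; moreover, applying \L{}o\'{s} to the definable predicate $\varphi(x) \equiv \exists y\,(T(y) \wedge \pi(x,1,y))$ identifies the subset $1 + T^{*}$ of the ultrapower with the corresponding colimit $\varinjlim_{J \in U}\prod_{i \in J}(1 + T)$. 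Invoking both parts of Lemma $\ref{marshalQl}$ (Marshall quotient commutes with products and with directed colimits) gives
\[
(A^{I}/U)/_{m}(1 + T^{*}) \;\cong\; \varinjlim_{J}\Bigl(\prod_{i \in J} A\Bigr) /_{m} \prod_{i \in J}(1 + T) \;\cong\; \varinjlim_{J}\prod_{i \in J}\bigl(A/_{m}(1+T)\bigr) \;\cong\; Q_{T}(A)^{I}/U.
\]

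Combining the isomorphisms of the two preceding paragraphs produces $Q_{T}(A)^{I}/U \cong Q_{P}(B)^{I}/U$ as $\mathcal{L}_{multi}$-structures, so Keisler-Shelah in the converse direction delivers $Q_{T}(A) \equiv_{\mathcal{L}_{multi}} Q_{P}(B)$, as desired. I expect the main obstacle to be the commutation claim in the middle paragraph: one must justify that $1 + T^{*}$, defined in the ultrapower via its own ternary sum relation, coincides with the ultraproduct of the sets $1 + T$ — this is precisely the content of \L{}o\'{s} applied to the existential formula $\varphi$ above — and then align Lemma $\ref{marshalQl}$ at the level of products (so that $\prod_{i\in J}(1+T)$ is the correct multiplicative set to quotient by in $\prod_{i \in J} A$) and subsequently at the level of directed colimits, without losing track of the preorder data in each coordinate.
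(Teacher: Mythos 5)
Your proof is correct and follows essentially the same route as the paper's: both invoke Keisler--Shelah, then commute $Q$ with ultrapowers by combining the Marshall-quotient representation $Q_T(A)\cong A/_m(1+T)$ from Theorem~$\ref{reprevN}$ with Lemma~$\ref{marshalQl}$ (Marshall quotient commutes with products and directed colimits) and the Remark~$\ref{ultrap}$ description of ultraproducts as filtered colimits of products. Your explicit \L{}o\'s-based identification of $1+T^{*}$ with $\varinjlim_{J\in U}\prod_{i\in J}(1+T)$ is a useful detail that the paper leaves implicit.
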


\begin{proof}

In order to prove that $Q \colon \textbf{pvNH} \to \textbf{RRM}$ preserves elementar equivalence, by Keisler-Shelah Theorem and Remark $\ref{ultrap}$ we have to shows that $Q$ preserves inductive limit and arbitrary products.

\textbf{Inductive Limits} \footnote{Another way to prove this is to observe that the functor $Q \colon \textbf{pMulti} \to \textbf{RRM}$ is left-adjoint to the inclusion functor $\textbf{RRM} \to \textbf{pMulti}$ by Theorem $\ref{uniProMRR}$ and thus preserve inductive limits.}

Let $\langle (A_i, T_i) \colon \{\mu_{i,j} \colon i \leq j \mbox{ in } I \}\rangle$ be an inductive system of pvNH. Note that $1 + \varinjlim T_i = \varinjlim (1 + T_i)$. Using the Theorem $\ref{reprevN}$ and Lemma $\ref{marshalQl}$, we have

\begin{align*}
    Q(\varinjlim (A_i,T_i)) &\cong Q(\varinjlim A_i, \varinjlim T_i) \cong \varinjlim A_i /_m (1 + \varinjlim T_i) \\
    & = \varinjlim A_i /_m \varinjlim (1 + T_i) \cong \varinjlim A_i /_m (1 + T_i) \\
    & \cong \varinjlim Q(A_i, T_i).
\end{align*}

\textbf{Products:}\\
     Let $\{(A_i,T_i)\}_{i \in I}$ be a family of pvNH. 
     Then by Theorem $\ref{reprevN}$ and Lemma $\ref{marshalQl}$
     
     \begin{align*}
         Q(\prod_{i \in I} (A_i, T_i)) &\cong Q(\prod_{i \in I}A_i, \prod_{i \in I} T_i)  \cong (\prod_{i \in I} A_i) /_m (1 + \prod_{i \in I}T_i) \\
         & = (\prod_{i \in I} A_i) /_m \prod_{i \in I}(1 + T_i) \cong \prod_{i \in I} A_i /_m (1 + T_i) \\
         & \cong \prod_{i \in I} Q(A_i, T_i).
     \end{align*}
\end{proof}
     
     Let $A$ be a von Neumann hyperring. For each $x \in \sum A^2$, consider the number $P(x) = \mbox{min}\{n \in \mathbb{N} \colon \mbox{exists } x_1, \ldots, x_n \in A \mbox{ such that } x \in x_1^2 + \ldots + x_n^2 \}$. Also consider
     
     \begin{align*}
         P(A) = sup \{n(x) \colon x \in \sum A^2\}
     \end{align*}

    called the Pythagorean number of $A$.
    It is possible to prove that $P(A) = sup \{P(K_A(q)) \colon q \in \mbox{spec}(A)\}$ and $P(A) < \infty$ if, and only if, $P(K_A(q)) < \infty$ for every $q \in \mbox{spec}(A)$.

     \begin{corollary}
     Let $A,B$ semi-real von Neumann hyperrings and assume $P(A), P(B) < \infty$. If $A \equiv_{\mathcal{L}_{multi}} B$, then $Q(A) \equiv_{\mathcal{L}_{multi}} Q(B)$.
     \end{corollary}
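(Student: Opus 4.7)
The plan is to reduce the statement to Theorem~\ref{theEleE} by exhibiting $\sum A^2$ and $\sum B^2$ as uniformly first-order definable preorders. For each $n \geq 1$, let
\begin{equation*}
\phi_n(x) \;\equiv\; \exists y_1, \ldots, y_n \, (x \in y_1^2 + \cdots + y_n^2),
\end{equation*}
and
\begin{equation*}
\theta_n \;\equiv\; \forall u, v, z \, \bigl(\phi_n(u) \wedge \phi_n(v) \wedge z \in u + v \rightarrow \phi_n(z)\bigr).
\end{equation*}
Since $a + 0 = \{a\}$ in every multiring, $\phi_n$ always defines the set of elements representable as a sum of at most $n$ squares, and $\theta_n$ says precisely that this set is closed under the multivaluated sum; equivalently it coincides with $\sum A^2$, i.e.\ that $P(A) \leq n$. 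Both $\phi_n$ and $\theta_n$ belong to $\mathcal{L}_{multi}$.

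Now set $N \defeq P(A) < \infty$. Then $A \models \theta_N$, and by $A \equiv_{\mathcal{L}_{multi}} B$ also $B \models \theta_N$. Hence the single formula $\phi_N$ defines $\sum A^2$ in $A$ and $\sum B^2$ in $B$. The sets $\sum A^2$ and $\sum B^2$ are proper preorders: additive closure is built in, closure under multiplication follows from the hyperring distributivity $(\sum a_i^2)(\sum b_j^2) = \sum (a_i b_j)^2$, and properness is exactly the semi-real hypothesis. Thus $(A, \sum A^2), (B, \sum B^2) \in \textbf{pvNH}$.

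Given any $\mathcal{L}_{pMulti}$-sentence $\psi$, let $\psi^*$ be the $\mathcal{L}_{multi}$-sentence obtained by replacing each atomic subformula $T(t)$ by $\phi_N(t)$. A routine induction on $\psi$ shows
\begin{equation*}
(A, \sum A^2) \models \psi \;\Leftrightarrow\; A \models \psi^*, \qquad (B, \sum B^2) \models \psi \;\Leftrightarrow\; B \models \psi^*.
\end{equation*}
Combined with $A \equiv_{\mathcal{L}_{multi}} B$ this yields $(A, \sum A^2) \equiv_{\mathcal{L}_{pMulti}} (B, \sum B^2)$. Applying Theorem~\ref{theEleE} gives $Q_{\sum A^2}(A) \equiv_{\mathcal{L}_{multi}} Q_{\sum B^2}(B)$. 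Finally, every $\sigma \in \mbox{sper}(A)$ satisfies $\sigma(\sum A^2) \subseteq \{0, 1\}$ automatically, so $\mbox{sper}_{\sum A^2}(A) = \mbox{sper}(A)$ and hence $Q_{\sum A^2}(A) = Q(A)$ (analogously for $B$), which gives the claim.

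The only delicate point --- and the sole place the hypothesis $P(A), P(B) < \infty$ is used --- is the translation between the infinitary description of $\sum A^2$ and a first-order formula. Without a uniform bound on the length of sum-of-squares representations, $\sum A^2$ need not be first-order definable, so Theorem~\ref{theEleE} could not be invoked; the Pythagorean assumption removes precisely this obstruction.
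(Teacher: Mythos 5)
Your proposal is correct and takes essentially the same route as the paper's (one-line) proof; you have simply filled in the details that the paper leaves to the reader, namely the construction of the uniform $\mathcal{L}_{multi}$-formula $\phi_N$ via the closure sentence $\theta_N$ and the routine translation of $\mathcal{L}_{pMulti}$-sentences. One very minor imprecision: the displayed identity $(\sum a_i^2)(\sum b_j^2) = \sum (a_i b_j)^2$ should read as an inclusion $\subseteq$ (half-distributivity only gives one direction), but that is all closure under multiplication requires, so the argument stands.
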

     
     \begin{proof}
     Since $P(A) < \infty$, the preorder the set $\sum A^2$ is definible by finitary first-order formuluas in the language $\mathcal{L}_{multi}$ (the same true for $B$). Thus if $A \equiv_{\mathcal{L}_{multi}} B$, then $(A, \sum A^2) \equiv_{\mathcal{L}_{pMulti}} (B, \sum B^2)$ and by Theorem $\ref{theEleE}$ we have $Q(A) \equiv_{\mathcal{L}_{multi}} Q(B)$.
     \end{proof}

\end{proof}

\section{The von Neumann Hull for Multiring}

In this section, it is built a geometric von Neumann hyperring hull for every multiring. The intention is to generalize the von Neumann hull for rings (\cite{AM}).

\begin{definition}
Let $A$ be a multiring. A function $f \colon \mbox{spec}(A) \to \bigsqcup_{p \in spec(A)} K_A(p)$ is called a constructible section if for all $p \in \mbox{spec}(A)$, $f(p) \in K_A(p)$ and given $p \in \mbox{spec}(A)$, exists clopen $U \subseteq (\mbox{spec}(A))_{const}$ and $x,y \in A$ such that

\begin{itemize}
    \item $p \in U \subseteq D(y)$.
    \item For all $q \in U$, $f(q) = \frac{x}{y} \in K_A(q)$.
\end{itemize}

The set of all constructible sections are denoted by $V(A)$ and $v_A \colon A \to V(A)$ is the natural map given by $v_A(a)(p) = \frac{a}{1} \in K_A(p)$ for every $p \in \mbox{spec}(A)$ and $a \in A$.
\end{definition}

\begin{remark}
Let $A$ be a multiring and $f \in V(A)$ a constructible section. Note that since $(\mbox{spec}(A))_{cons}$ is compact, exists $U_1, \ldots, U_n \subseteq (\mbox{spec}(A))_{const}$ clopens and $x_i,y_i \in A$, $i=1, \ldots, n$, such that

\begin{enumerate}[.]
    \item $\mbox{spec}(A) = \bigcup_{i=1}^n U_i$, $U_i \subseteq D(y_i)$.
    
    \item For all $q \in U_i$, $f(q) = \frac{x_i}{y_i} \in K_A(q)$.
\end{enumerate}

More than that, if we substitute $U_i$ by $U'_i = U_i \cap U_1^c \cap \cdots \cap U_{i-1}^c$, we can assume that the family $\{U_1, \ldots, U_n\}$ is disjoint.
\end{remark}

\begin{proposition} \label{intHull}
 Let $A$ is a geometric von Neumann hyperring, then $v_A \colon A \to V(A)$ is a bijection.
\end{proposition}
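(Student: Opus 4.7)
The plan is to handle injectivity by an immediate appeal to Theorem $\ref{geoVon}$, and then to construct a preimage for surjectivity by exploiting the Boolean structure of $\mbox{spec}(A)$ together with the inverse structure provided by $\nabla$.

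For injectivity: suppose $v_A(a) = v_A(b)$. Then $a = b$ in $K_A(p)$ for every $p \in \mbox{spec}(A)$, so by the last clause of Theorem $\ref{geoVon}$ we conclude $a = b$ in $A$. This uses only that $A$ is GvNH.

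For surjectivity, fix $f \in V(A)$. First reduce to a convenient cover: using the remark right before the proposition we may take disjoint clopens $U_1, \ldots, U_n$ and elements $x_i, y_i \in A$ with $\mbox{spec}(A) = \bigsqcup U_i$, $U_i \subseteq D(y_i)$, and $f(q) = x_i / y_i$ in $K_A(q)$ for every $q \in U_i$. Since $A$ is a vNH, by Lemma $\ref{orth}$ (i) each clopen is of the form $D(e_i)$ for some idempotent $e_i$, and disjointness $e_ie_j = 0$ together with $\bigcup D(e_i) = \mbox{spec}(A)$ gives that $\{e_1, \ldots, e_n\}$ is a partition of unity; in fact by Theorem $\ref{geoVon}$ (iii) we have $e_1 + \cdots + e_n = \{1\}$.

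The key local step is to rewrite $y_i^{-1}$ inside $A$ on the piece $D(e_i)$. Since $D(e_i) \subseteq D(y_i) = D(i(y_i))$ and these are clopen, we have $e_i \leq i(y_i)$ in the idempotent Boolean algebra, so $e_i \, i(y_i) = e_i$. Using Proposition $\ref{nablai}$, $y_i = i(y_i) \nabla(y_i)$ with $\nabla(y_i) \in A^{\times}$, hence $y_i \cdot \nabla(y_i)^{-1} = i(y_i)$, which equals $1$ in every $K_A(q)$ with $q \in D(y_i)$. Therefore for $q \in D(e_i)$, the class of $\nabla(y_i)^{-1}$ represents $y_i^{-1}$ in $K_A(q)$. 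Define
\begin{equation*}
a_i \;=\; e_i\, x_i\, \nabla(y_i)^{-1} \in A, \qquad i=1,\ldots,n,
\end{equation*}
and pick any $a \in a_1 + a_2 + \cdots + a_n$ (well-defined and non-empty since $A$ is a multiring).

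It remains to check $v_A(a) = f$. Fix $q \in \mbox{spec}(A)$; then $q \in D(e_i)$ for exactly one $i$. For $j \neq i$, orthogonality $e_i e_j = 0$ and primality force $e_j \in q$, so $a_j = 0$ in $K_A(q)$; meanwhile $a_i = x_i \nabla(y_i)^{-1} = x_i / y_i$ in $K_A(q)$ by the previous paragraph. Passing $a \in \sum a_j$ through the morphism $A \to K_A(q)$ yields $a \in a_i + 0 + \cdots + 0 = \{x_i/y_i\}$, i.e.\ $v_A(a)(q) = f(q)$. This holds for all $q$, so $v_A(a) = f$, completing surjectivity.

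The main obstacle I anticipate is the manipulation of ``inverses'' of non-invertible elements $y_i$; the idempotent/$\nabla$ decomposition of Proposition $\ref{nablai}$ together with the identity $e_1 + \cdots + e_n = \{1\}$ from Theorem $\ref{geoVon}$ (iii) (which is exactly where the geometric hypothesis is used, beyond vNH) is what makes the gluing $a \in \sum a_i$ produce a single coherent element of $A$ rather than a merely local datum.
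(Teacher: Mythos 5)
Your argument is correct and follows the paper's proof essentially verbatim: take $a \in e_1x_1\nabla(y_1)^{-1} + \cdots + e_nx_n\nabla(y_n)^{-1}$ and verify $v_A(a)(q) = f(q)$ pointwise on each $D(e_i)$. One small correction to your closing remark: the geometric hypothesis is used only for injectivity (via Theorem~\ref{geoVon}); the surjectivity construction works for any vNH, and the identity $e_1 + \cdots + e_n = \{1\}$ plays no role, since $a_1 + \cdots + a_n$ is nonempty by the multiring axioms alone and the coherence of $a$ is checked locally in each $K_A(q)$.
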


\begin{proof}
    Since $A$ is geometric, it follows by Theorem $\ref{geoVon}$ that $v_A$ is injective. On the other hand, given $f \in V(A)$, exists $U_1, \ldots, U_n \subseteq \mbox{spec}(A)$ clopens and $x_i,y_i \in A$ such that
    
    \begin{enumerate}[.]
        \item $\mbox{spec}(A) = \bigsqcup_{i=1}^n U_i$ and $U_i \subseteq D(y_i)$.
        
        \item For all $p \in U_i$, $f(p) = \frac{x_i}{y_i} \in K_A(p)$.
    \end{enumerate}
    
    Because $A$ is von Neumann, exists $e_i \in A$ idempotent such that $U_i = D(e_i)$. Consider the element $a \in e_1x_1\nabla(y_1)^{-1} + \cdots + e_nx_n \nabla(y_n)^{-1}$. Then for all $p \in U_i$, in $K_A(p)$ we have $v_A(a)(p) = x_i \nabla(y_i)^{-1} = \frac{x_i}{y_i} = f(p)$. Thus $v_A(a) = f$.
    
    \end{proof} 

The preceding Proposition suggest that $V(A)$ can be a candidate do von Neumann hull. Consider in $V(A)$ the product punctually defined and the multivaluated sum defined in a similar manner: given $f,g,h \in V(A)$

\begin{align*}
    f \in g + h \Leftrightarrow \mbox{ for every } p \in \mbox{spec}(A) \ f(p) \in g(p) + h(p) \mbox{ in } K_A(p) \\
\end{align*}

First we need to prove that $V(A)$ is a multiring and in the sequence that it is von Neumann hyperring.

\begin{lemma} \label{lemma1}
Let $A$ be a multiring and $x_i,y_i \in A$, $i=1,2,3$. Given $p \in D(y_1y_2y_3)$ such that $\frac{x_1}{y_1} \in \frac{x_2}{y_2} + \frac{x_3}{y_3}$ in $K_A(p)$, exist a clopen $U \subseteq D(y_1y_2y_3)$ with $p \in U$ such that for all $q \in U$, $\frac{x_1}{y_1} \in \frac{x_2}{y_2} + \frac{x_3}{y_3}$ in $K_A(q)$.
\end{lemma}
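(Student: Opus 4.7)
The plan is to unfold the hypothesis on the relation in $K_A(p) = ff(A/p)$ into a single algebraic relation in $A$, and then read off a defining constructible clopen from the witnesses involved. By the construction of the localization in the excerpt, $\frac{x_1}{y_1} \in \frac{x_2}{y_2} + \frac{x_3}{y_3}$ in $K_A(p)$ means that there exists $\bar v \in A/p \setminus \{0\}$ with $\overline{x_1 y_2 y_3 v} \in \overline{x_2 y_1 y_3 v} + \overline{x_3 y_1 y_2 v}$ in $A/p$; and by Proposition~\ref{idealq}.\ref{78}), this in turn unfolds to: there exist $v \in A \setminus p$ and $i \in p$ such that
\[
x_1 y_2 y_3 v \in x_2 y_1 y_3 v + x_3 y_1 y_2 v + i \quad \text{in } A. \tag{$\ast$}
\]

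With these witnesses $v$ and $i$ fixed, I would set
\[
U \defeq D(v y_1 y_2 y_3) \cap V(i), \qquad V(i) \defeq \mbox{spec}(A) \setminus D(i),
\]
which is clopen in the constructible topology $(\mbox{spec}(A))_{const}$ since both $D(\cdot)$ and $V(\cdot)$ are clopen there. Clearly $U \subseteq D(y_1 y_2 y_3)$, and $p \in U$ because $v, y_1, y_2, y_3 \notin p$ and $i \in p$ by the choice of witnesses.

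To finish, for any $q \in U$ I pass $(\ast)$ through the quotient $\pi_q \colon A \to A/q$: since $i \in q$, the equation becomes $\overline{x_1 y_2 y_3 v} \in \overline{x_2 y_1 y_3 v} + \overline{x_3 y_1 y_2 v}$ in $A/q$. Because $v, y_1, y_2, y_3 \notin q$, their classes are invertible in $K_A(q)$, so this translates in the localization to $\frac{x_1}{y_1} \in \frac{x_2}{y_2} + \frac{x_3}{y_3}$ in $K_A(q)$, as required. The only subtle point is the careful bookkeeping through the two-layer construction $A \to A/p \to ff(A/p)$; once the hypothesis is correctly unfolded into $(\ast)$, the choice of $U$ and the verification that it works are immediate.
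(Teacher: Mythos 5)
Your proof is correct and follows essentially the same route as the paper: unfold the membership in $K_A(p)=ff(A/p)$ into a single relation $(\ast)$ in $A$ with witnesses $v\notin p$, $i\in p$, take $U = D(vy_1y_2y_3)\cap D(i)^c$, and push $(\ast)$ back down through $A\to A/q\to K_A(q)$ for each $q\in U$. The paper's witnesses $s,t$ and its set $D(y_1y_2y_3)\cap D(s)\cap D(t)^c$ are the same as yours up to notation, so there is no substantive difference.
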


\begin{proof}
If $\frac{x_1}{y_1} \in \frac{x_2}{y_2} + \frac{x_3}{y_3}$ in $K_A(p) = \mbox{ff}(A/p)$, by definition exists $s \notin p$ and $t \in p$ such that

\begin{align*}
    x_1 (y_2y_3)s \in x_2 (y_1y_3)s + x_3 (y_1y_2)s + t \mbox{ in } A.
\end{align*}

Let $U = D(y_1y_2y_3) \cap D(s) \cap D(t)^c$. Then $U \subseteq (\mbox{spec}(A))_{const}$ is clopen with $p \in U$ and by definition for all $q \in U$ we have $\frac{x_1}{y_1} \in \frac{x_2}{y_2} + \frac{x_3}{y_3}$ in $K_A(q)$.

\end{proof}

\begin{theorem}
Let $A$ be a multiring. Then $V(A)$ is a multiring and $v_A \colon A \to V(A)$ a multiring morphism.
\end{theorem}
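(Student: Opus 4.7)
The plan is to verify in sequence that (a) $V(A)$ is closed under the pointwise-defined operations and that sums are non-empty in $V(A)$; (b) the multiring axioms of Definition~$\ref{multiDef}$ hold; and (c) $v_A$ is a multiring morphism. The basic technique throughout is to take a common constructible clopen refinement of the finite partitions witnessing that the sections under consideration are constructible, which lets us replace each pointwise assertion by an assertion about concrete fractions $x/y$ with $x,y \in A$.

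For closure, given $g,h\in V(A)$, intersecting their witnessing partitions yields a finite disjoint clopen cover $\{U_i\}_{i=1}^n$ of $\mbox{spec}(A)$ (in the constructible topology) together with $x_i,y_i,x'_i,y'_i\in A$ such that $U_i\subseteq D(y_iy'_i)$, $g(p)=x_i/y_i$ and $h(p)=x'_i/y'_i$ for every $p\in U_i$. Closure under product, additive inverses, and the constants $0,1$ is then immediate from the natural formulas on each piece (product is $x_ix'_i/(y_iy'_i)$, negation is $-x_i/y_i$). For non-emptiness of $g+h$, pick any $z_i\in x_iy'_i+x'_iy_i$ in $A$ (the sum is non-empty by the remark following Definition~$\ref{multiDef}$) and set $f(p)=z_i/(y_iy'_i)$ for $p\in U_i$; then $f\in V(A)$ satisfies $f(p)\in g(p)+h(p)$ for every $p$.

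Axioms (i), (ii), (iv), (v), (vi) of Definition~$\ref{multiDef}$ all follow pointwise because each fiber $K_A(p)$ is a multiring and all operations and the relation $a\in b+c$ on $V(A)$ are defined pointwise. The only delicate axiom is associativity (iii): given $x,g,a,b,c\in V(A)$ with $x\in g+c$ and $g\in a+b$, associativity in each multifield $K_A(p)$ produces some $h_p\in b(p)+c(p)$ with $x(p)\in a(p)+h_p$, and we may write $h_p=e_p/D_p$ with $e_p\in A$ and $D_p\notin p$. The main obstacle is gluing the $h_p$ into a single constructible section.

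For this gluing, I would use a Lemma~$\ref{lemma1}$-style argument. Fixing $p$, the two membership relations $e_p/D_p\in b(q)+c(q)$ and $x(q)\in a(q)+e_p/D_p$ in $K_A(q)$ unfold into inclusions in $A$ involving finitely many auxiliary elements, say $s,s'\notin p$ and $t,t'\in p$; both relations then persist on the constructible clopen neighborhood $W_p=D(D_p)\cap D(s)\cap D(s')\cap D(t)^c\cap D(t')^c$ of $p$. By compactness of $(\mbox{spec}(A))_{const}$, the family $\{W_p\}_p$ admits a finite subcover $W_{p_1},\ldots,W_{p_m}$; disjointifying via $V_i=W_{p_i}\setminus\bigcup_{j<i}W_{p_j}$ and setting $h(q)=e_{p_i}/D_{p_i}$ for $q\in V_i$ produces a constructible section $h\in V(A)$ with $h\in b+c$ and $x\in a+h$, as required. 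Part (c) is immediate: the map $v_A$ acts fiberwise by the canonical multiring morphism $A\to K_A(p)$, so it respects all of the structure by construction.
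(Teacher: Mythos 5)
Your proposal is correct and follows essentially the same path as the paper: routine axioms are checked pointwise via the refinement trick, associativity is handled by applying a Lemma~\ref{lemma1}-type persistence argument at each $p$ together with compactness of $(\mbox{spec}(A))_{const}$ and disjointification, and $v_A$ is a morphism because it factors through the canonical maps $A \to K_A(p)$. The only notable difference is that you spell out the non-emptiness of sums and closure under the operations, which the paper dismisses as ``not difficult to see,'' and in the associativity step both you and the paper elide (harmlessly) the preliminary shrinking of $W_p$ to a neighborhood where $a,b,c,x$ themselves have fixed fraction representations, which is needed before the membership relations can be unfolded into inclusions in $A$.
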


\begin{proof}
It is not difficult to see that $V(A)$ satisfies all multiring axiom except the associative property. So let $f,g,h,z,t \in V(A)$ with 

\begin{align*}
    z \in f + t, t \in g + h.
\end{align*}

We should find $t' \in f + g$ such that $z \in t' + h$. For each $p \in \mbox{spec}(A)$, since $z(p) \in f(p) + t(p), t(p) \in g(p) + h(p)$ in $K_A(p)$, exist $x,y \in A$ such that
$p \in D(y)$ and

\begin{align*}
z(p) \in \frac{x}{y} + h(p), \frac{x}{y} \in f(p) + g(p) \mbox{ in } K_A(p).
\end{align*}

By the Lemma $\ref{lemma1}$, exist clopen $U \subseteq (\mbox{spec}(A))_{const}$ with $p \in U$ such that for all $q \in U$, $z(q) \in \frac{x}{y} + h(q), \frac{x}{y} \in f(q) + g(q) \mbox{ in } K_A(q)$. Since $(\mbox{spec}(A))_{const}$ is Boolean, exists $U_1, \ldots, U_n$ clopens and $x_i,y_i \in A$, $i=1, \ldots, n$ such that

\begin{enumerate}[.]
    \item $\mbox{spec}(A) = \bigcup_{i=1}^n U_i$, with $U_i \subseteq D(y_i)$.
    
    \item For each $p \in U_i$, $z(p) \in \frac{x_i}{y_i} + h(p), \frac{x_i}{y_i} \in f(p) + g(p) \mbox{ in } K_A(p)$.
\end{enumerate}

Substituing $U_i$ by $U_i \cap U_1^c \cap \cdots \cap U_{i-1}^c$, we can assume that $U_i \cap U_j = \emptyset$ for $i \neq j$. Define the function $t' \colon \mbox{spec}(A) \to \bigsqcup_{p \in spec(A)}K_A(p)$ given by 

\begin{align*}
\mbox{If }p \in U_i, \mbox{ then } t'(p) = \frac{x_i}{y_i}.    
\end{align*}

Then $t' \in V(A)$ and $z \in t' + h$, with $t' \in f + g$.

\end{proof}

Given $f \in V(A)$, consider the functions
$i(f), \nabla(f), f^c \colon \mbox{spec}(A) \to \bigsqcup_{p \in spec(A)} K_A(p)$ given by

\begin{itemize}
    \item 
    
   \begin{equation*}
    i(f)(p) =
    \begin{cases*}
      1 & if $f(p) \neq 0$ \\
      0 & if $f(p) = 0$.
    \end{cases*}
  \end{equation*}
  
  \item
  \begin{equation*}
    \nabla(f)(p) =
    \begin{cases*}
      f(p) & if $f(p) \neq 0$ \\
      -1 & if $f(p) = 0$.
    \end{cases*}
  \end{equation*}
  
  \item
  
  \begin{equation*}
    f^c (p) =
    \begin{cases*}
      0 & if $f(p) \neq 0$ \\
      1 & if $f(p) = 0$.
    \end{cases*}
  \end{equation*}
  
\end{itemize}

\begin{lemma} \label{lemma3}
Let $A$ be a multiring and $f,g \in V(A)$. Then

\begin{enumerate}[i)]
    \item $i(f),\nabla(f),f^c \in V(A)$ and $i(f)^2 = i(f), (f^c)^2 = f^c$, $\nabla(f) \in V(A)^{\times}$.
    \item $f^c = i(f)^c$.
    \item $ff^c = 0$ and $f^c \in 1 - i(f)$.
    \item If $f^2 = f$, then $fg + f^cg = \{g\}$.
    \item $f = \nabla(f) i(f)$.
\end{enumerate}
\end{lemma}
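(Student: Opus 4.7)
The plan is to reduce every statement to a pointwise check in the hyperfields $K_A(p)$, exploiting the fact that by construction both the product and the sum in $V(A)$ are defined pointwise. In each $K_A(p)$ an element is either $0$ or invertible, so the functional formulas for $i(f)$, $\nabla(f)$ and $f^c$ all make sense at every point, and the identities in (ii)--(v) become trivial hyperfield identities. The only genuine content is to show these functions actually belong to $V(A)$, i.e.\ that they are constructible sections.

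For (i), I start from the decomposition guaranteed by the remark following the definition of $V(A)$: there exist pairwise disjoint clopens $U_1, \ldots, U_n \subseteq (\mbox{spec}(A))_{const}$ with $\mbox{spec}(A) = \bigsqcup U_i$, $U_i \subseteq D(y_i)$ and $f(p) = x_i/y_i$ for $p \in U_i$. I then refine each $U_i$ by intersecting with the constructible clopen $D(x_i)$: put $V_i = U_i \cap D(x_i)$ and $W_i = U_i \setminus V_i$. On $V_i$, $x_i \notin p$ so $f(p) \neq 0$, and I declare $i(f) = 1/1$, $f^c = 0/1$, $\nabla(f) = x_i/y_i$. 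On $W_i$, $x_i \in p$ forces $f(p) = 0$, and I declare $i(f) = 0/1$, $f^c = 1/1$, $\nabla(f) = -1/1$. Each is manifestly a constructible section. Idempotence of $i(f)$ and $f^c$ is immediate since their values lie in $\{0,1\}$. For invertibility of $\nabla(f)$ I exhibit its inverse explicitly: set it to $y_i/x_i$ on $V_i$ (legitimate because $x_i \notin p$) and to $-1/1$ on $W_i$; this is again constructible, and the pointwise product with $\nabla(f)$ is $1$ everywhere.

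Parts (ii)--(v) then follow by pointwise analysis. For (ii), I verify that $f^c$ satisfies $f^c \in 1 - i(f)$ and $f^c \cdot i(f) = 0$ at every $p$ (when $i(f)(p) = 0$ we have $f^c(p) = 1 \in 1 - 0$; when $i(f)(p) = 1$ we have $f^c(p) = 0 \in 1 - 1$), and then invoke the uniqueness of the complement of an idempotent established in Remark and Notation~\ref{vNremarks}; the uniqueness argument is purely algebraic and transfers to $V(A)$ because the hyperring axioms hold pointwise in the hyperfields $K_A(p)$. Part (iii) repeats the same pointwise check. For (iv), since $f^2 = f$ and $K_A(p)$ is a hyperfield, $f(p) \in \{0,1\}$; hence $f = i(f)$, and the required identity reduces to $g(p) + 0 = \{g(p)\}$ or $0 + g(p) = \{g(p)\}$ at each $p$, whence equality holds as subsets of $V(A)$. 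Part (v) is the direct pointwise verification $\nabla(f)(p)\, i(f)(p) = f(p)$ in both cases $f(p) = 0$ and $f(p) \neq 0$.

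The main obstacle, as anticipated, is not any deep content but the bookkeeping in (i): making sure the refinement $V_i \sqcup W_i$ still lives in the constructible Boolean algebra and that $\nabla(f)$ admits a constructible inverse. Since $D(x_i)$ is quasi-compact open in $\mbox{spec}(A)$, it is clopen in the constructible topology, so the refinement is legitimate and the rest is a routine pointwise computation inside each hyperfield $K_A(p)$.
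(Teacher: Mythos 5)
Your proof is correct. The paper's own proof is simply the word ``Straightforward,'' so you are supplying exactly what was intended: the only genuine work is the constructibility argument in part (i), which you handle properly by refining each $U_i$ along the constructible clopen $D(x_i)$ and exhibiting an explicit constructible inverse for $\nabla(f)$, while parts (ii)--(v) reduce to pointwise identities in the hyperfields $K_A(p)$ (for (ii) the shortest route is simply to note $i(f)(p)\neq 0 \Leftrightarrow f(p)\neq 0$, so the defining pointwise formulas for $f^c$ and $i(f)^c$ coincide, without appealing to the uniqueness argument of Remark~\ref{vNremarks}).
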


\begin{proof}
Straightforward.
\end{proof}

\begin{theorem}
Let $A$ be a multiring. Then $V(A)$ is a geometric von Neumann hyperring.
\end{theorem}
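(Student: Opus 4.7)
The plan is to verify, in this order, (a) that $V(A)$ satisfies the strong (hyperring) distributive law, (b) that every $h\in V(A)$ admits $b\in V(A)$ with $h=h^{2}b$, and (c) that $e+e^{c}=\{1\}$ for every idempotent $e\in V(A)$. Since the preceding theorem already gives that $V(A)$ is a multiring, conditions (a)+(b) yield via Proposition \ref{cVn} that $V(A)$ is a von Neumann hyperring, and (c) upgrades this to \emph{geometric} via Theorem \ref{geoVon}. Throughout I will use Lemma \ref{lemma3}, which provides $i(h),\nabla(h),h^{c}\in V(A)$ with $i(h)^{2}=i(h)$, $\nabla(h)\in V(A)^{\times}$, $h=i(h)\nabla(h)$, and $i(h)\,i(h)^{c}=0$, $i(h)^{c}\in 1-i(h)$.

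For (b), I would simply compute
\[
h^{2}\nabla(h)^{-1}=i(h)^{2}\nabla(h)^{2}\nabla(h)^{-1}=i(h)\nabla(h)=h,
\]
so $b=\nabla(h)^{-1}$ works. For (c), if $e^{2}=e$ in $V(A)$ then at every $p\in\mbox{spec}(A)$ one has $e(p)^{2}=e(p)$ in the hyperfield $K_{A}(p)$, forcing $e(p)\in\{0,1\}$; then $e^{c}(p)=1-e(p)$ and the axioms of a real-style hyperfield computation give $e(p)+e^{c}(p)=\{1\}$ pointwise. Since the multivalued sum on $V(A)$ is defined pointwise, every element of $e+e^{c}$ is the constant section $1$, so $e+e^{c}=\{1\}$.

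The genuine work is step (a). Given $z\in fh+gh$ in $V(A)$, I must produce $u\in f+g$ with $uh=z$. Pick any $w\in f+g$ (nonempty because $V(A)$ is a multiring) and any $u\in i(h)\,z\,\nabla(h)^{-1}+i(h)^{c}\,w$ (also nonempty, and in $V(A)$). I then check pointwise that $u\in f+g$ and $uh=z$. At $p$ with $h(p)\neq 0$ one has $i(h)(p)=1$, $i(h)^{c}(p)=0$, $\nabla(h)(p)=h(p)$, so $u(p)=z(p)h(p)^{-1}$, which lies in $f(p)+g(p)$ because $K_{A}(p)$ is a hyperfield and $z(p)\in f(p)h(p)+g(p)h(p)$; moreover $u(p)h(p)=z(p)$. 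At $p$ with $h(p)=0$, necessarily $z(p)=0$, and $u(p)=w(p)\in f(p)+g(p)$ while $u(p)h(p)=0=z(p)$. Hence $u\in f+g$ and $uh=z$ in $V(A)$.

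The main obstacle is exactly the gluing in (a): pointwise one trivially finds $u_{p}\in f(p)+g(p)$ with $z(p)=u_{p}h(p)$, but nothing guarantees that the family $(u_{p})$ assembles into a constructible section. The formula $u\in i(h)\,z\,\nabla(h)^{-1}+i(h)^{c}\,w$ resolves this by using the characteristic idempotent $i(h)$ to splice the two cases inside $V(A)$ itself, so constructibility is automatic from the multiring operations on $V(A)$ already established.
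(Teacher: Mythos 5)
Your proof is correct and follows essentially the same route as the paper: you prove $V(A)$ is a hyperring by splicing a witness out of $i(h)$, $\nabla(h)$, and $i(h)^{c}$ (the paper's witness $y\in f^{c}l+\nabla(f)^{-1}x$ is the same element once one uses $x=xi(f)$ and $f^{c}=i(f)^{c}$), then verify the von Neumann identity via $h^{2}\nabla(h)^{-1}=h$ and the geometric condition $e+e^{c}=\{1\}$. The only difference is stylistic: the paper carries out the verification by algebraic manipulation inside $V(A)$ using Lemma \ref{lemma3}, whereas you verify the same identities stalkwise in each $K_{A}(p)$; since the operations of $V(A)$ are defined pointwise, both are legitimate and equivalent.
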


\begin{proof}
First we prove that $V(A)$ is an hyperring. Let $x \in fg + fh$. We need to find $y \in g + h$ such that $x = fy$. Let $l \in g + h$ and take

\begin{align*}
    y \in f^cl + \nabla(f)^{-1}x.
\end{align*}

Note that since $f^cx \in f^cfg + f^cfh = \{0\}$, we have by Lemma $\ref{lemma3}$ that $0 = f^cx \in x - xi(f)$ and so $x = xi(f)$. Then $yf = i(f) \nabla(f)^{-1}xf = i(f)x = x$. On the other hand, we have $f^cl \in f^cg + f^ch$ and $\nabla(f)^{-1} \in \nabla(f)^{-1}fg + \nabla(f)^{-1}fh = i(f)g + i(f)h$. Then

\begin{align*}
    y \in (f^cg + f^ch) + (i(f)g + i(f)h) = (f^cg + i(f)g) + (f^ch + i(f)h) = g + h.
\end{align*}

Therefore, $V(A)$ is an hyperring. Lastly, $V(A)$ is a geometric von Neumann hyperring because given $f,g \in V(A), g^2=g$, we have by Lemma $\ref{lemma3}$

\begin{enumerate}[.]
\item $(i(f) \nabla(f)^{-1}) f^2 = (i(f) \nabla(f)^{-1}) (i(f) \nabla(f))^2 = i(f) \nabla(f) = f$.

\item $g + g^c = \{1\}$
\end{enumerate}
\end{proof}

\begin{remark} \label{remarktopAlg}
Since $\{D(a) \cap D(b_1)^c \cap \cdots \cap D(b_n)^c \colon n \geq 1, a,b_1, \ldots, b_n \in A\}$ is base of $(\mbox{spec}(A))_{const}$, given $f \in V(A)$ and $p \in \mbox{spec}(A)$, exists $a,b_1, \ldots, b_n, x,y \in A$ such that

\begin{enumerate}[i)]
    \item $p \in D(a) \cap D(b_1)^c \cap \cdots \cap D(b_n)^c \subseteq D(y).$
    
    \item For all $q \in D(a) \cap D(b_1)^c \cap \cdots \cap D(b_n)^c, f(q) = \frac{x}{y} \mbox{ in } K_A(q)$.
\end{enumerate}

Furthermore assuming that $D(a) \cap D(b_1)^c \cap \cdots \cap D(b_n)^c \subseteq D(y)$, the item $ii)$ is equivalent to 

\begin{align*}
v_A(y)f i(v_A(a)) v_A(b_1)^c \cdots v_A(b_n)^c = v_A(x)i(v_A(a)) v_A(b_1)^c \cdots v_A(b_n)^c.
\end{align*}
\end{remark}

\begin{lemma} \label{univ(f)}
Let $f \colon A \to B$ a multiring morphism. Let $g,h \colon V(A) \to V(B)$ two morphism such that

\[\begin{tikzcd}
        A \arrow[r, "f"] \arrow[d, "v_A"] & B \arrow[d, "v_B"] \\
        V(A)\arrow[r, bend left=18, "g"{below}]
  \arrow[r, bend right=18, "h"{below}] & V(B)\\
    \end{tikzcd}\]

is a commutative diagram. Then $g=h$.
\end{lemma}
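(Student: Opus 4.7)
The plan is to exploit that $V(A)$ is a geometric von Neumann hyperring, so every element admits a ``piecewise'' description in terms of elements of the form $v_A(a)$ together with the GvNH operations $i(\cdot)$, $(\cdot)^c$, $\nabla(\cdot)$, and inversion of units; any multiring morphism between GvNH preserves all of these operations (Remark $\ref{vNremarks}$ and Proposition $\ref{nablai}(ii)$), so once $g$ and $h$ agree on such a generating family, they must coincide on all of $V(A)$.

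First I would fix $f \in V(A)$ and, using Remark $\ref{remarktopAlg}$ together with compactness of $(\mbox{spec}(A))_{\mathrm{cons}}$, write $\mbox{spec}(A) = U_1 \sqcup \cdots \sqcup U_n$ as a disjoint union of constructible clopens of the form $U_i = D(a_i) \cap D(b_{i,1})^c \cap \cdots \cap D(b_{i,k_i})^c \subseteq D(y_i)$ with $f(q) = x_i/y_i \in K_A(q)$ for every $q \in U_i$. Setting
\[
e_i \;:=\; i(v_A(a_i))\cdot v_A(b_{i,1})^c \cdots v_A(b_{i,k_i})^c, \qquad z_i \;:=\; v_A(x_i)\cdot \nabla(v_A(y_i))^{-1},
\]
viewed as elements of $V(A)$, a pointwise verification shows that $\{e_1,\ldots,e_n\}$ is a partition of unity in $V(A)$ and that $f\cdot e_i = z_i\cdot e_i$ for each $i$ (on $U_i$ both sides equal $x_i/y_i$; outside $U_i$ the factor $e_i$ vanishes).

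Next I would apply $g$ and $h$. Since $g\circ v_A = h\circ v_A = v_B\circ f$, the two morphisms agree on every $v_A(a)$, and hence on every $e_i$ and every $z_i$ by preservation of the GvNH operations. Writing $e_i' := g(e_i) = h(e_i)$ and $z_i' := g(z_i) = h(z_i)$, the family $\{e_i'\}$ is again a partition of unity in the GvNH $V(B)$ (the relations $e_ie_j=0$ and $1\in\sum e_i$ are preserved, and $\sum e_i'=\{1\}$ is then automatic by Lemma $\ref{orth}(ii)$). Applying $g$ and $h$ to the equality $f\cdot e_i = z_i\cdot e_i$ gives $g(f)\cdot e_i' = z_i'\cdot e_i' = h(f)\cdot e_i'$ for every $i$, i.e., $g(f)$ and $h(f)$ agree in each localization $V(B)_{e_i'}$. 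The sheaf/monopresheaf property of $V(B)$ (Theorem $\ref{geoVon}$) then forces $g(f) = h(f)$.

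The main obstacle is arranging the initial local reconstruction cleanly: one must check that $\nabla(v_A(y_i))^{-1}$, well-defined as an element of $V(A)^{\times}$ by Proposition $\ref{nablai}(i)$, has the correct value $1/y_i$ on $U_i \subseteq D(y_i)$, while its possibly ``wrong'' value outside $D(y_i)$ is absorbed by the idempotent $e_i$, which vanishes there. Once the data $\{(e_i, z_i)\}$ locally representing $f$ are in hand, the remainder is a routine sheaf-theoretic uniqueness argument combined with preservation of the GvNH operations, and the conclusion $g=h$ follows by arbitrariness of $f$.
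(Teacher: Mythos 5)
Your proof is correct, but it follows a genuinely different route from the paper's. The paper argues \emph{pointwise}: it fixes $t\in V(A)$ and a single prime $p\in\mbox{spec}(B)$, produces one constructible clopen around $q=f^{-1}(p)$ on which $t$ is represented by a fraction $x/y$, translates that into an algebraic identity in $V(A)$ of the form $v_A(y)\,t\cdot(\text{idempotent}) = v_A(x)\cdot(\text{idempotent})$ (via Remark~\ref{remarktopAlg}, which clears the denominator so no inverse is needed), applies $g$ and $h$ to get the same identity in $V(B)$, and reads off $g(t)(p)=f(x)/f(y)=h(t)(p)$. You instead work \emph{globally}: you cover $\mbox{spec}(A)$ by a finite disjoint family of constructible clopens, package them as a partition of unity $\{e_i\}$ in $V(A)$, express $t$ locally as $z_i=v_A(x_i)\nabla(v_A(y_i))^{-1}$ (using the invertibility of $\nabla$ from Proposition~\ref{nablai}), push the whole decomposition through $g$ and $h$ using preservation of $i(\cdot)$, $(\cdot)^c$, $\nabla(\cdot)$, and then invoke the monopresheaf property of $V(B)$ (Theorem~\ref{geoVon}) to glue the local agreements into $g(t)=h(t)$. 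Both arguments rest on the same two facts --- that $g$ and $h$ agree on the image of $v_A$ and that GvNH operations are morphism-equivariant --- but the paper's version is leaner (no partition of unity, no $\nabla$-inverses, no explicit appeal to the sheaf axiom), while yours makes the ``elements of $V(A)$ are finitely piecewise generated from $v_A(A)$'' picture completely explicit, which is conceptually clarifying and reusable.

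One small point worth being careful about: your elements $z_i = v_A(x_i)\nabla(v_A(y_i))^{-1}$ do not equal $t$ globally (as you correctly note, $\nabla(v_A(y_i))^{-1}$ takes the value $-1$ rather than $1/y_i$ outside $D(y_i)$), so the argument must route through the equality $t\cdot e_i = z_i\cdot e_i$ rather than any claim about $z_i$ alone; you do handle this, but it is exactly the spot where a careless reader could slip, and it is precisely the complication the paper sidesteps by multiplying by $v_A(y)$ instead of inverting it.
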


\begin{proof}
Let $t \in V(A)$ and $p \in \mbox{spec}(B)$. We need to prove that $g(t)(p) = h(t)(p)$. Let $q = f^{-1}(p)$. Since $t$ is a section, exists $a,b_1, \ldots, b_n ,x,y \in A$ such that

\begin{enumerate}[.]
    \item $q \in D(a) \cap D(b_1) \cap \cdots \cap D(b_n)^c \subseteq D(y)$.
    
    \item For all $o \in D(a) \cap D(b_1)^c \cap \cdots D(b_n)^c$, $t(o) = \frac{x}{y} \in K_A(o)$.
\end{enumerate}

The last item implies that $v_A(y)t i(v_A(a)) v_A(b_1)^c \cdots v_A(b_n)^c = v_A(x)i(v_A(a)) v_A(b_1)^c \cdots v_A(b_n)^c$. Note that, since $g$ is a morphism, $g(i(v_A(a))) = i(g(v_A(a))) = i(v_B(f(a)))$ and $g(v_A(b_i)^c) = g(v_A(b_i))^c = v_B(f(b_i))^c$ for $i=1, \ldots, n$. Thus

\begin{align*}
    v_B(f(y))g(t) i(v_B(f(a))) v_B(f(b_1))^c \cdots v_B(f(b_n))^c &= g(v_A(y)) g(t) g(i(v_A(a))) g(v_A(b_1)^c) \cdots g(v_A(b_n)^c) \\
        & = g(v_A(y)t i(v_A(a)) v_A(b_1)^c \cdots v_A(b_n)^c) \\
        & = g(v_A(x)i(v_A(a)) v_A(b_1)^c \cdots v_A(b_n)^c) \\
        & = v_B(f(x))i(v_B(f(a))) v_B(f(b_1))^c \cdots v_B(f(b_n))^c.
\end{align*}

By the remark $\ref{remarktopAlg}$, the above equality is equivalent to

\begin{enumerate}[.]
    \item For all $o \in D(f(a)) \cap D(f(b_1))^c \cap \cdots \cap D(f(b_n))^c$, $g(t)(o) = \frac{f(x)}{f(y)} \in K_B(o)$.
\end{enumerate}

In particular, since $p \in D(f(a)) \cap D(f(b))^c$, we have $g(t)(p) = \frac{f(x)}{f(y)}$. Applying the same argument for $h$, we conclude that $h(t)(p) = \frac{f(x)}{f(y)} = g(t)(p)$. Since $p \in \mbox{spec}(A)$ was arbitrary, $g(t) = h(t)$. Thus $g = h$.

\end{proof}

\begin{theorem} \label{uniPGvNH}
Let $A, B$ be multirings.

\begin{enumerate}[i)]
    \item If $f \colon A \to B$ is a multiring morphism, then exist unique morphism $V(f) \colon V(A) \to V(B)$ such that
    
    \[\begin{tikzcd}
        A \arrow[r, "f"] \arrow[d] & B \arrow[d] \\
        V(A) \arrow[r, "V(f)"] & V(B)\\
    \end{tikzcd}\]

    is a commutative diagram. Furthermore, $V \colon \textbf{Multi} \to \textbf{GvNH}$ is a functor.
    
    \item Given a morphism $f \colon A \to B$, where $B$ is a GvNH, exist unique morphism $\overline{f} \colon V(A) \to B$ such that 
    
      \[\begin{tikzcd}
        A \arrow[r, "v_A"] \arrow[dr, "f"] & V(A) \arrow[d, "\overline{f}"] \\
         & B\\
\end{tikzcd}\]
    
    is a commutative diagram. In other words, the functor $V \colon \textbf{Multi} \to \textbf{GvNH}$ is a left-adjoint functor to the inclusion functor $\textbf{GvNH} \to \textbf{Multi}$.
\end{enumerate}
\end{theorem}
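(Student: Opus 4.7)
The plan is to define $V(f)$ locally using the sectional description of $V(A)$ and then to use Lemma \ref{univ(f)} as the uniqueness engine for everything that follows.

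For part $i)$, given $t \in V(A)$ and $q \in \mbox{spec}(B)$, set $p \defeq f^{-1}(q) \in \mbox{spec}(A)$ and, via Remark \ref{remarktopAlg}, choose $a, b_1, \ldots, b_n, x, y \in A$ with $p \in U \defeq D(a) \cap D(b_1)^c \cap \cdots \cap D(b_n)^c \subseteq D(y)$ and $t|_U = x/y$. Define $V(f)(t)(q) \defeq f(x)/f(y) \in K_B(q)$. The first task is to check independence of the chosen local data: two such representations active at $q$ agree on a common refinement around $p$, so their $f$-images coincide in $K_B(q)$. Next, the clopen $D(f(a)) \cap D(f(b_1))^c \cap \cdots \cap D(f(b_n))^c$ of $\mbox{spec}(B)$ witnesses that $V(f)(t)$ is a constructible section on a neighbourhood of $q$. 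Finally, the operations in both $V(A)$ and $V(B)$ are defined pointwise in the fibers $K_A(p), K_B(q)$, and $f$ induces fiber-level morphisms $K_A(p) \to K_B(q)$, so $V(f)$ is a multiring morphism. Uniqueness of $V(f)$, and hence functoriality (since both $V(g) \circ V(f)$ and $V(g \circ f)$ complete the same commutative square, and similarly for identities), is immediate from Lemma \ref{univ(f)}.

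For part $ii)$, I first upgrade Proposition \ref{intHull}: when $B$ is GvNH the bijection $v_B \colon B \to V(B)$ is in fact a multiring isomorphism. The operations on $V(B)$ are pointwise in $K_B(q)$, and Theorem \ref{geoVon} guarantees that in a GvNH both equality and sum-membership are detected fiberwise, so $v_B$ reflects them as well as preserving them. With $v_B$ invertible in $\textbf{Multi}$, set $\overline{f} \defeq v_B^{-1} \circ V(f) \colon V(A) \to B$; chasing the naturality square from part $i)$ gives
\[
\overline{f} \circ v_A \;=\; v_B^{-1} \circ V(f) \circ v_A \;=\; v_B^{-1} \circ v_B \circ f \;=\; f.
\]
For uniqueness, any other morphism $\overline{f}'$ with $\overline{f}' \circ v_A = f$ produces $v_B \circ \overline{f}$ and $v_B \circ \overline{f}'$ as two morphisms $V(A) \to V(B)$ both sitting over the same $v_B \circ f \colon A \to V(B)$; Lemma \ref{univ(f)} equates them, and applying $v_B^{-1}$ yields $\overline{f} = \overline{f}'$.

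The main obstacle is the well-definedness of $V(f)(t)$ as a genuine constructible section of $V(B)$: one has to patch the local formulas $f(x)/f(y)$ across varying choices of local data into an honest section, which requires both that distinct representations at the same $q$ produce the same fiber value in $K_B(q)$ and that the basic constructible clopen structure of $(\mbox{spec}(A))_{cons}$ transfers correctly under the spectral map $f^{-1} \colon \mbox{spec}(B) \to \mbox{spec}(A)$. Once this verification is in hand, the remainder of the theorem reduces to uniqueness bookkeeping powered by Lemma \ref{univ(f)} together with the fact that $v_B$ is an isomorphism whenever the target is GvNH.
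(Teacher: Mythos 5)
Your proposal is correct and follows essentially the same approach as the paper's proof: local description of $V(f)$, Lemma~\ref{univ(f)} as the uniqueness engine, and for part ii) the observation that $v_B$ is an isomorphism when $B$ is GvNH (bijectivity from Proposition~\ref{intHull}, reflection of structure from Theorem~\ref{geoVon}) followed by $\overline{f} = v_B^{-1}\circ V(f)$. The one streamlining the paper makes in part i) is to define $V(f)(t)(q)$ intrinsically as the image of the fiber value $t(f^{-1}(q))\in K_A(f^{-1}(q))$ under the canonical fiber morphism $K_A(f^{-1}(q))\to K_B(q)$, so that well-definedness is automatic and your check of independence of local representatives is not needed; the local formula $f(x)/f(y)$ is then used only to verify the section condition.
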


\begin{proof}
\begin{enumerate}[i)]
    \item Given morphism $f \colon A \to B$, consider the induced map $\mbox{spc}(f) \colon \mbox{spec}(B) \to \mbox{spec}(A)$ and given $p \in \mbox{spec}(B)$, the morphism $K_A(\mbox{spc}(f)(p)) \to K_B(p)$ (also denoted by $f$). Given $a \in V(A)$ function $a \colon \mbox{spec}(A) \to \bigsqcup_{q \in \mbox{spec}(A)} K_A(q)$, consider $\overline{f}(a) \colon \mbox{spec}(B) \to \bigsqcup_{p \in \mbox{spec}(B)} K_B(p)$ defined by $\overline{f}(a)(p) = f(a(\mbox{spc}(f)(p)))$. Note that $\overline{f}(a) \in V(B)$, that is, it is a section. In fact, given $p \in \mbox{spec}(B)$, exists clopen $U \subseteq \mbox{spec}(A)$ and $x,y \in A$ with
    
    \begin{enumerate}[.]
        \item $\mbox{spc}(f)(p) \in U$ and $U \subseteq D(y)$.
        
        \item For all $q \in U$, $a(q) = \frac{x}{y} \in K_A(q)$.
    \end{enumerate}
    
    Then $V = \mbox{spc}(f)^{-1}(U) \subseteq \mbox{spec}(B)$ is a clopen and
    
    \begin{enumerate}[.]
        \item $p \in V$ and $V \subseteq D(f(y))$.
        
        \item For all $q \in V$, $\overline{f}(a)(q) = \frac{f(x)}{f(y)} \in K_B(q)$.
    \end{enumerate}
    
    Thus we have a function $\overline{f} \colon V(A) \to V(B)$. It is easy to see that $\overline{f}(1) = 1, \overline{f}(-1) = -1$. Furthermore, given $a,b \in V(A)$, we have for $p \in \mbox{spec}(B)$
    
    \begin{align*}
        \overline{f}(ab)(p) &= f((ab) (\mbox{spc}(f)(p)))  = f(a(\mbox{spc}(f)(p)) b(\mbox{spc}(f)(p))) \\
        &= f(a(\mbox{spc}(f)(p))) f(b(\mbox{spc}(f)(p))) = \overline{f}(a)(p) \overline{f}(b)(p).
    \end{align*}
    
    Thus $\overline{f}(ab) = \overline{f}(a) \overline{f}(b)$. Also if $a \in b + c$ in $V(A)$, given $p \in \mbox{spec}(B)$, since $a(\mbox{spc}(f)(p)) \in b(\mbox{spc}(f)(p)) + c(\mbox{spc}(f)(p))$ in the hyperfield $K_A(\mbox{spc}(f)(p))$, we have  $\overline{f}(a)(p) \in \overline{f}(b)(p) + \overline{f}(c)(p)$ in the hyperfield $K_B(p)$. Thus $\overline{f}(a) \in \overline{f}(b) + \overline{f}(c)$ in $V(B)$. Therefore, $\overline{f} \colon V(A) \to V(B)$ is a multiring morphism. The uniqueness of $\overline{f}$ is stated and proved in the Lemma $\ref{univ(f)}$.
    \item First let prove the following claim:
    
    \begin{claim*}
    If $A$ is a geometric von Neumann hyperring, then $v_A \colon A \to V(A)$ is an isomorphism.
    \end{claim*}
     
    \begin{proof}
    By Proposition $\ref{intHull}$, $v_A \colon A \to V(A)$ is a bijection. On the other hand, by Theorem $\ref{geoVon}$, given $a,b,c \in A$, $a \in b + c$ if, and only if, $v_A(a) \in v_A(b) + v_A(c)$.  Thus $v_A$ is an isomorphism.
    \end{proof}
    Now if $f \colon A \to B$ is a morphism, $B$ GvNH, then consider $v(f) \colon V(A) \to V(B)$. By the claim above, $v_B$ is isomorphism and we can consider $\overline{f} = v_B^{-1} \circ v(f)$. Note that
    $\overline{f} \circ v_A = v_B^{-1} \circ (v(f) \circ v_A) = v_B^{-1} \circ (v_B \circ f) = f$. If $\overline{f}' \colon V(A) \to B$ is another morphism with $\overline{f}' \circ v_A = f$, then
    $v_B \circ \overline{f'} \circ v_A = v_B \circ f$. Thus by Lemma $\ref{univ(f)}$, $v_B \circ \overline{f}' = v(f)$ and so $\overline{f}' = v_B^{-1} \circ v(f) = \overline{f}$.
\end{enumerate}
\end{proof}

\begin{theorem} \label{algV(A)}
Let $A$ be a multiring and consider $v \colon A \to V(A)$.

\begin{enumerate}[i)]
    \item The induced map $\mbox{spec}(V(A)) \to (\mbox{spec}(A))_{const}$ is homeomorphism.
    
    \item The induced map $\mbox{sper}(V(A)) \cong (\mbox{sper}(A))_{const}$ is homeomorphism.
    
    \item Let $p \in \mbox{spec}(V(A))$ and $q = v^{-1} (p)$. The induced map $K_A(q) \to K_{V(A)}(p)$ is isomorphism.
\end{enumerate}

\end{theorem}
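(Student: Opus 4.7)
The plan is to organize everything around the pointwise evaluation morphisms. For each $q \in \mathrm{spec}(A)$ define $\mathrm{ev}_q \colon V(A) \to K_A(q)$ by $f \mapsto f(q)$; this is a multiring morphism because the operations on $V(A)$ are defined pointwise in the hyperfields $K_A(p)$. It is surjective: given $x/y \in K_A(q)$ with $y \notin q$, the set $D(y)$ is clopen in $(\mathrm{spec}(A))_{const}$, so the section equal to $x/y$ on $D(y)$ and $0$ on its complement lies in $V(A)$ and realizes $x/y$. Hence $\mathfrak m_q \defeq \ker(\mathrm{ev}_q)$ is a prime ideal of $V(A)$, and by inspection $v^{-1}(\mathfrak m_q) = q$; since $V(A)$ is a GvNH the ideal $\mathfrak m_q$ is maximal, $V(A)/\mathfrak m_q$ is a hyperfield, and the induced morphism $V(A)/\mathfrak m_q \to K_A(q)$ is an isomorphism (an explicit inverse $K_A(q) \to V(A)/\mathfrak m_q$ exists by the universal properties of Proposition $\ref{idealq}$ and of the fraction field). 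Combined with Proposition $\ref{kAp}$, this yields $iii)$ as soon as $i)$ identifies every prime of $V(A)$ with some $\mathfrak m_q$.

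For $i)$ the crux is a canonical isomorphism of Boolean algebras $\mathrm{Id}(V(A)) \cong \mathrm{Clopens}((\mathrm{spec}(A))_{const})$ sending $e \mapsto U_e \defeq \{p \colon e(p) = 1\}$. An idempotent $e \in V(A)$ takes values in $\{0,1\}$ pointwise (the only idempotents in the hyperfield $K_A(p)$); starting from a local description $e|_{U_i} = x_i/y_i$ and using that $A/q$ is a multidomain (so $x_i/y_i = 0$ in $K_A(q)$ iff $x_i \in q$), one refines $U_i$ into $(U_i \setminus D(x_i)) \sqcup (U_i \cap D(x_i))$, with $e = 0$ on the first piece and $e = 1$ on the second; hence $e$ is the characteristic function of a constructible clopen, and conversely every such characteristic function is a constructible section. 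Since $V(A)$ is vNH, $\mathrm{spec}(V(A))$ is the Stone dual of $\mathrm{Id}(V(A))$, so Stone duality gives a homeomorphism $\Phi \colon (\mathrm{spec}(A))_{const} \to \mathrm{spec}(V(A))$. To check that $\Phi$ inverts $v^*$, for $p \in \mathrm{spec}(V(A))$ let $q_p \in (\mathrm{spec}(A))_{const}$ be the unique point lying in every clopen $U_e$ with $e \notin p$; using $U_{i(v(a))} = D(a)$ one gets $v(a) \in p \Leftrightarrow i(v(a)) \in p \Leftrightarrow q_p \notin D(a) \Leftrightarrow a \in q_p$, so $v^{-1}(p) = q_p$, and the analogous unfolding with $f = i(f)\nabla(f)$, $\nabla(f) \in V(A)^{\times}$, yields $f \in p \Leftrightarrow f(q_p) = 0$, so $p = \mathfrak m_{q_p}$.

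For $ii)$ the map $\Psi \colon \mathrm{sper}(V(A)) \to \mathrm{sper}(A)$, $\sigma \mapsto \sigma \circ v$, is a bijection: injectivity because every $\sigma$ factors as $\tilde\sigma \circ \mathrm{ev}_q$ for $q = v^{-1}(\mathrm{supp}(\sigma))$ and the image of $v(A)$ generates $K_A(q)$ as a fraction multifield; surjectivity because any $\tau \in \mathrm{sper}(A)$ supported at $q$ descends to an order on $K_A(q) \cong V(A)/\mathfrak m_q$ and thus lifts to $V(A)$. By Remark $\ref{vNremarks}$, $\mathrm{sper}(V(A))$ is Boolean, and $\Psi$ is continuous with target $(\mathrm{sper}(A))_{const}$: the preimage of a basic constructible open $\bigcap_{i} \{\tau(a_i) = \epsilon_i\}$ is $\bigcap_{i} \{\sigma(v(a_i)) = \epsilon_i\}$, and each factor is open in $\mathrm{sper}(V(A))$, namely $U(v(a_i))$ if $\epsilon_i = 1$, $U(-v(a_i))$ if $\epsilon_i = -1$, and $U(i(v(a_i))^c)$ if $\epsilon_i = 0$ (using $i(v(a_i)) + i(v(a_i))^c = \{1\}$); a continuous bijection between compact Hausdorff spaces is a homeomorphism. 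The main obstacle is the Boolean-algebra identification underlying $i)$, i.e.\ verifying that every idempotent of $V(A)$ really is the characteristic function of a constructible clopen; once that is in place, $i), ii), iii)$ all unfold by routine Stone duality and the universal property of $\mathrm{ev}_q$.
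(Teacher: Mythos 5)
Your route to $i)$ and $ii)$ via Stone duality of $\mathrm{Id}(V(A))$ is a genuinely different and more explicit path than the paper's, which simply derives the bijection on spectra from the universal property in Theorem $\ref{uniPGvNH}$ (morphisms to the Krasner hyperfield $\mathbb{K}$, resp.\ to $\textbf{3}$, are in bijection on both sides, then continuity plus compact Hausdorff target closes it). Both work; your identification of $\mathrm{Id}(V(A))$ with characteristic functions of constructible clopens is a nice intermediate fact that the paper does not isolate.

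There is, however, a genuine gap in $iii)$. Producing a morphism $\beta \colon K_A(q) \to V(A)/\mathfrak m_q$ in the opposite direction via the universal properties of the quotient (Proposition $\ref{idealq}$) and of the fraction hyperfield is \emph{not} enough to conclude that $\alpha \colon V(A)/\mathfrak m_q \to K_A(q)$ (the map induced by $\mathrm{ev}_q$) is an isomorphism. In the multiring setting a surjective morphism with trivial kernel can fail to be injective --- the paper's own example $\mathrm{sgn} \colon \mathbb{R} \to \textbf{3}$ is exactly this --- so you cannot infer injectivity of $\alpha$ from $\ker\alpha = 0$, and the mere existence of $\beta$ only gives one composite: $\alpha \circ \beta = \mathrm{Id}_{K_A(q)}$ follows because $\alpha\circ\beta$ and $\mathrm{Id}$ agree after precomposition with the epimorphic $A \to K_A(q)$. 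The missing composite is $\beta \circ \alpha = \mathrm{Id}_{V(A)/\mathfrak m_q}$, which amounts to $\beta \circ \mathrm{ev}_q = \pi \colon V(A) \to V(A)/\mathfrak m_q$. These two morphisms agree on $v_A(A)$, but $V(A)$ is in general strictly larger than $v_A(A)$, so to conclude equality you must invoke the uniqueness clause of Theorem $\ref{uniPGvNH}$ (equivalently Lemma $\ref{univ(f)}$): two morphisms out of $V(A)$ into a geometric von Neumann hyperring that agree on $v_A(A)$ coincide. This is precisely the step the paper performs explicitly via ``by Theorem $\ref{uniPGvNH}$, $v_q \circ f_p \circ j = j$''. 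Adding that single invocation closes your argument; the rest of $iii)$, and all of $i)$ and $ii)$, are sound.
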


\begin{proof}

\begin{enumerate}[i)]
    \item Using the characterization of prime ideals as morphism to Krasner hyperfield $\mathbb{K} = \{0,1\}$, the universal property in Theorem $\ref{uniPGvNH}$ imples that the induced map $\mbox{spec}(V(A)) \to \mbox{spec}(A)$ is a bijection. But by since $V(A)$ is von Neumann hyperring, the topological space $\mbox{sper}(V(A))$ is Boolean and thus $\mbox{sper}(V(A)) \to (\mbox{sper}(A))_{const}$ is homeomorphism.
    
    \item Using the hyperfield $3 = \{-1,0,1\}$, the proof follows similarly to item $i)$.
    
    \item Consider the inclusions maps $i \colon A \to K_A(q)$ and $j \colon V(A) \to K_{V(A)} (p)$. By propositions $\ref{idealq}$ and $\ref{localization}$, if $f,g \colon K_A(q) \to K$, $K$ hyperfield, with $f \circ i = g \circ i$, then $f = g$. An analogous property holds for the map $j \colon V(A) \to K_{V(A)} (p)$. Consider the following commutative diagram:

    \[\begin{tikzcd}
        A \arrow[r, "v"] \arrow[d, "i"] & V(A) \arrow[d, "j"] \\
        K_A(q) \arrow[r, "v_q"] & K_{V(A)}(p)\\
    \end{tikzcd}\]

    Furtheremore, by Theorem $\ref{uniPGvNH}$, exist morphism $f \colon V(A) \to K_A(q)$ such that $f \circ v = i$. Note that $q = i^{-1} (0) = v^{-1} (f^{-1} (0))$ and thus $f^{-1}(0) = p$ because $v^{-1} \colon \mbox{spec}(V(A)) \to \mbox{spec}(A)$ is injective. Then by Proposition $\ref{idealq}$ exist $f_p \colon K_{V(A)} (p) \to K_A(q)$ such that

    \[\begin{tikzcd}
        A \arrow[r, "v"] \arrow[dr, "i"] & V(A) \arrow[d, "f"] \arrow[r, "j"] & K_{V(A)}(p) \arrow[dl, "f_p"]\\
                        & K_A(q) &                                  \\
    \end{tikzcd}\]
    
    is a commutative diagram.
    
    Then, combining this with the preceding diagram, we have $f_p \circ v_q \circ i = f_p \circ j \circ v = f \circ v = i$. Then $f_p \circ v_q = Id_{K_A(q)}$; on the other hand, we have $v_q \circ f_p \circ j \circ v = v_q \circ f \circ v = v_q \circ i  = j \circ v$. Thus, by Theorem $\ref{uniPGvNH}$, $v_q \circ f_p \circ j = j$ and then $v_q \circ f_p = Id_{K_{V(A)}(p)}$.

\end{enumerate}

\end{proof}

\begin{lemma} \label{RRMHFF}
Let $A$ be a real reduced multiring. Then for all $p \in \mbox{spec}(A)$, $K_A(p)$ is a real reduced hyperfield.
\end{lemma}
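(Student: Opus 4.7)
The plan is to verify the three criteria of Corollary \ref{hyperfieldRR} for the hyperfield $F := K_A(p) = \mbox{ff}(A/p)$. Note first that, since $p$ is prime, Proposition \ref{idealq}, \ref{idealqPrime}), gives that $A/p$ is a multidomain, so $F$ is indeed a hyperfield by the construction recalled after Proposition \ref{localization}.

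The condition $1 \neq 0$ is immediate from $p$ being a proper ideal. For the condition $a^2 = 1$ whenever $a \neq 0$, observe that the equational identity $x^3 = x$ is a defining axiom of RRM for $A$ (Fact \ref{piIso}) and is preserved by the composition of multiring morphisms $A \to A/p \to F$; thus $a^3 = a$ holds for every $a \in F$. Since $F$ is a hyperfield, any nonzero $a$ is invertible, and multiplying $a^3 = a$ by $a^{-1}$ yields $a^2 = 1$.

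The remaining condition $1 + 1 = \{1\}$ is the main point. First I would specialise the RRM axiom $a + b^2 a = \{a\}$ at $b = 1$ to obtain $y + y = \{y\}$ for every $y \in A$, and then push this identity down to $A/p$: if $\pi(z) \in \pi(y) + \pi(y)$ in $A/p$, then by the sum description in Proposition \ref{idealq}, \ref{78}), some $z' \sim_p z$ lies in $y + y = \{y\}$ in $A$, so $\pi(z) = \pi(y)$. Using surjectivity of $\pi$, this gives $\bar y + \bar y = \{\bar y\}$ in $A/p$ for every $\bar y$. Now for any $c \in 1 + 1$ in $F$, write $c = x/s$ with $s$ nonzero in $A/p$; unwinding the sum in the localization produces $v \in A/p \setminus \{0\}$ with $xv \in sv + sv$, which by the preceding step equals $\{sv\}$, whence $xv = sv$ in $A/p$ and therefore $c = x/s = 1$ in $F$.

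The principal obstacle is the careful two-stage transport of the multivalued sum through the quotient and then the localization: the sum in $F$ must be unpacked down to a sum in $A/p$, and then pulled back to a sum in $A$ using the lifting statement of Proposition \ref{idealq}, \ref{78}). Once this is done, the RRM axioms of $A$ conclude, and Corollary \ref{hyperfieldRR} certifies that $K_A(p)$ is a real reduced hyperfield.
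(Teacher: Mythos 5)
Your proposal is correct and follows essentially the same route as the paper's proof: verify the three conditions of Corollary \ref{hyperfieldRR}, transport $x^3 = x$ through the morphism $A \to K_A(p)$ to get $a^2 = 1$ for nonzero $a$, and derive $1+1 = \{1\}$ from the RRM axiom $a + b^2 a = \{a\}$ (specialised to get $y+y=\{y\}$ in $A$, then unwound through the quotient and localization). The paper's write-up of the $1+1=\{1\}$ step is terser and unwinds both stages at once; you spell out the two-stage transport and also explicitly note $1 \neq 0$, but the underlying argument is the same.
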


\begin{proof}
Since for all $x \in A$, $x^3 = x$, the same is true for $K_A(p)$; but because it is a hyperfield, if $a \in K_A(p)$ is non-zero, then $a^2 = 1$. On the other hande, given $x \in 1 + 1$ in $K_A(p)$, by definition exist $i \notin p$ and $x' \in A$ such that $x' - x \cap p \neq \emptyset$ and $x'i \in i + i$. Then $x'i = i$ and in $K_A(p)$ we have $x = x' = 1$. Thus $K_A(p)$ is an real reduced hyperfield.
\end{proof}

\begin{proposition} \label{GvnHRRM}
 Let $A$ be a multiring.
 
 \begin{enumerate}[i)]
     \item If $A$ is a real reduced multiring, then $V(A)$ also is.
     
     \item If $A$ is a von Neumann hyperring, then $Q(A)$ also is.
 \end{enumerate}
\end{proposition}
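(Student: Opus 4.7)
The plan is to treat both parts by combining the fiberwise characterization of RRM given by Proposition~\ref{GvNHR} with the representation $Q_T(A) \cong A/_m(1+T)$ established in Theorem~\ref{reprevN}, so that no explicit manipulation of constructible sections or Marshall representatives is actually required.

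For part \textit{i)}, recall that the construction in Section~4 already shows that $V(A)$ is always a geometric von Neumann hyperring. By Proposition~\ref{GvNHR}, to upgrade ``GvNH'' to ``real reduced hyperring'' it suffices to verify $1 + f^2 = \{1\}$ in $V(A)$ for every $f \in V(A)$. Since the multivalued sum in $V(A)$ is defined pointwise over $\mbox{spec}(A)$, this reduces to checking $1 + f(p)^2 = \{1\}$ in each $K_A(p)$. When $A$ is RRM, Lemma~\ref{RRMHFF} says each $K_A(p)$ is a real reduced hyperfield, and by Corollary~\ref{hyperfieldRR} in such a hyperfield every square lies in $\{0,1\}$ and $1+0 = 1+1 = \{1\}$. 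Hence any $g \in V(A)$ with $g \in 1 + f^2$ satisfies $g(p) = 1$ for every $p$, and since $v_A(1)$ is precisely the constant-one section, $g$ and $v_A(1)$ agree as functions, so $g = 1$ in $V(A)$. A real reduced hyperring is in particular an RRM, which is the claim.

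For part \textit{ii)}, I would first dispatch the degenerate case: if $A$ is not semi-real then $\mbox{sper}(A) = \emptyset$ by Proposition~\ref{basicReal}, so $Q(A)$ collapses to the trivial one-element multiring, which is vacuously a vNH. Otherwise $T = \sum A^2$ is a proper preorder, and Theorem~\ref{reprevN} applied to the vNH $A$ and the preorder $T$ yields an isomorphism $Q(A) = Q_T(A) \cong A /_m (1+T)$. The proof of Theorem~\ref{reprevN} itself exhibits the right-hand side as a real reduced hyperring, and by Proposition~\ref{GvNHR} every real reduced hyperring is a geometric von Neumann hyperring, hence in particular a vNH.

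Neither part looks genuinely hard; the only moment where one has to be careful is the very last step of \textit{i)}, passing from ``$g(p) = 1$ for all $p$'' to ``$g = 1$ in $V(A)$''. This is immediate once one recalls that elements of $V(A)$ are literally set-theoretic functions on $\mbox{spec}(A)$, so equality of values at every stalk is equality in $V(A)$ — no appeal to the sheaf property or to Theorem~\ref{algV(A)} is needed here.
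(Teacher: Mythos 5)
Your proof is correct and follows essentially the same route as the paper's: reduce the $1+f^2 = \{1\}$ condition to the fibers, invoke Lemma~\ref{RRMHFF} and Corollary~\ref{hyperfieldRR} there, then apply Proposition~\ref{GvNHR}; and for part~\textit{ii)}, read the vNH property off from the proof of Theorem~\ref{reprevN}. The only (welcome) differences are that in part~\textit{i)} you bypass Theorem~\ref{algV(A)} and the geometric-sheaf argument by observing directly that elements of $V(A)$ are set-theoretic sections with a pointwise sum, and in part~\textit{ii)} you explicitly dispatch the degenerate non-semi-real case, which the paper silently assumes away.
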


\begin{proof}
\begin{enumerate}[i)]
    \item For every $p \in \mbox{spec}(V(A))$, we have by Theorem $\ref{algV(A)}$ that $K_{V(A)}(p) \cong K_A(q)$, where $q = v_A^{-1} (p)$. Thus by Lemma $\ref{RRMHFF}$ $K_{V(A)}(p)$ is real reduced hyperfield. But since $1 + a^2 = \{1\}$ in $K_A(p)$ for every $p \in \mbox{spec}(V(A))$, the same is true for $V(A)$ because it is geometric. The Proposition $\ref{GvNHR}$ implies that $V(A)$ is real reduced hyperring.
    
    \item The result is a direct consequence of Theorem $\ref{reprevN}$ proof because if $A$ is a vNH, then  $Q(A) \cong A /_m 1 + \sum A^2$ is a GvNH.
\end{enumerate}

\end{proof}

\begin{theorem} \label{QVVQ}
Let $A$ be a semi-real multiring and consider the morphisms $\pi_A \colon A \to Q(A)$, $v_A \colon A \to V(A)$ and $v(\pi_A) \colon V(A) \to V(Q(A))$. Then $V(A)$ is semi-real and the induced map $Q(V(A)) \to V(Q(A))$ is isomorphism.
\end{theorem}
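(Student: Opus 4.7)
The plan has two parts. First, I verify that $V(A)$ is semi-real: by Proposition $\ref{basicReal}$, semi-realness is equivalent to $\mbox{sper}(A)\neq\emptyset$, and Theorem $\ref{algV(A)}$(ii) supplies a homeomorphism $\mbox{sper}(V(A))\cong(\mbox{sper}(A))_{const}$, so this space is non-empty whenever $\mbox{sper}(A)$ is, whence $V(A)$ is semi-real by Proposition $\ref{basicReal}$ again. Second, I note that both $Q(V(A))$ and $V(Q(A))$ are real reduced multirings: the former because $Q$ always lands in $\textbf{RRM}$ by Theorem $\ref{uniProMRR}$, and the latter by Proposition $\ref{GvnHRRM}$(i) applied to the RRM $Q(A)$. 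Since $V(Q(A))$ is RRM, the universal property of $\pi_{V(A)}$ (Theorem $\ref{uniProMRR}$(ii)) produces a unique morphism $\phi\colon Q(V(A))\to V(Q(A))$ with $\phi\circ\pi_{V(A)}=v(\pi_A)$; this is the induced map, and the task reduces to showing it is an isomorphism.

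To prove $\phi$ is an isomorphism I would exhibit $V(Q(A))$ as a ``free RRM on $A$'' in the same sense that $Q(V(A))$ is. Every RRM $R$ is a GvNH by Proposition $\ref{GvNHR}$, so Theorem $\ref{uniPGvNH}$(ii) and Theorem $\ref{uniProMRR}$(ii) together yield natural bijections
\[
\mbox{Hom}(Q(V(A)),R)\cong\mbox{Hom}(V(A),R)\cong\mbox{Hom}(A,R)\cong\mbox{Hom}(Q(A),R)\cong\mbox{Hom}(V(Q(A)),R),
\]
each step amounting to ``restrict along the canonical map into the GvNH or RRM reflection''. A short diagram chase then identifies the resulting composite bijection with precomposition by $\phi$, so by the Yoneda lemma $\phi$ is an isomorphism.

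The main technical obstacle is confirming that the Yoneda bijection really coincides with precomposition by $\phi$, i.e.\ that the two ways of extending $v_{Q(A)}\circ\pi_A\colon A\to V(Q(A))$ to $Q(V(A))$ --- directly via $\pi_A$ followed by $v_{Q(A)}$, or via $v(\pi_A)$ followed by $\pi_{V(A)}$ --- coincide; but both are unique extensions of the same map from $A$ into an RRM, so the uniqueness clauses of Theorems $\ref{uniPGvNH}$ and $\ref{uniProMRR}$ force them to agree. If a more hands-on route is preferred, Lemma $\ref{isoRS}$ reduces the problem to showing $\phi$ is surjective (by lifting a local constructible representation $f(p)=\tfrac{\pi_A(x)}{\pi_A(y)}$ of a section of $V(Q(A))$ to the section $\tfrac{x}{y}$ of $V(A)$ using the surjectivity of $\pi_A$ and then applying Proposition $\ref{intHull}$) together with the fact that $\mbox{sper}(\phi)$ is a bijection, which follows by combining Corollary $\ref{sper(Q(A))}$ and Theorem $\ref{algV(A)}$(ii) to identify both real spectra with $(\mbox{sper}(A))_{const}$.
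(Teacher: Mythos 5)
Your plan for the first part ($V(A)$ semi-real) is exactly the paper's: invoke Theorem~\ref{algV(A)}(ii) to identify $\mbox{sper}(V(A))$ with $(\mbox{sper}(A))_{const}$ and apply Proposition~\ref{basicReal}.

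For the isomorphism $Q(V(A))\cong V(Q(A))$, you use the same ingredients as the paper (Proposition~\ref{GvnHRRM} to place $Q(V(A))$ and $V(Q(A))$ in the right categories, and the universal properties of $Q$ and $V$ from Theorems~\ref{uniProMRR} and~\ref{uniPGvNH}), but you package the verification as a Yoneda/representability argument, whereas the paper explicitly constructs the inverse map $g\colon V(Q(A))\to Q(V(A))$ via the universal property of $v_{Q(A)}$ and then checks $g\circ f=\mathrm{Id}$ and $f\circ g=\mathrm{Id}$ by a diagram chase through the two commuting squares $v(\pi_A)\circ v_A=v_{Q(A)}\circ\pi_A$ and $Q(v_A)\circ\pi_A=\pi_{V(A)}\circ v_A$. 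The two arguments are substantively the same --- your chain of $\mbox{Hom}$-bijections is precisely what the paper unfolds by hand --- so your approach buys a cleaner high-level statement at the cost of leaving more to the reader. Your alternative via Lemma~\ref{isoRS} (surjectivity plus surjectivity of $\mbox{sper}$) is also viable but would require genuine work on the lifting of sections.

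One imprecision to flag: you assert ``Every RRM $R$ is a GvNH by Proposition~\ref{GvNHR}.'' That proposition does not say this. Item (i) there is RRM \emph{plus} the extra complement-idempotent condition, and the equivalence is with \emph{real reduced hyperrings}, not arbitrary RRM; there may be RRM that are not hyperrings and hence not covered. This does not sink your Yoneda argument, because it suffices to let $R$ range over the full subcategory of real reduced hyperrings --- both $Q(V(A))$ (being $Q$ applied to a GvNH, hence a GvNH and an RRM by Proposition~\ref{GvnHRRM}(ii)) and $V(Q(A))$ (being $V$ of an RRM, hence a GvNH and an RRM by Proposition~\ref{GvnHRRM}(i)) live there, and Yoneda in that full subcategory is all you need. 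But as written, the quantifier over ``all RRM'' is too broad and the cited justification is wrong.
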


\begin{proof}
Since $A$ is semi-real, by Proposition $\ref{basicReal}$ and Theorem $\ref{algV(A)}$ we have $\emptyset \neq (\mbox{sper}(A))_{const} \cong \mbox{sper}(V(A))$ and thus $V(A)$ semi-real.
Now, to prove the induced map $Q(V(A)) \to V(Q(A))$ is an isomorphism, we proceed with several applications of universal properties described in Theorem $\ref{uniPGvNH}$ and Theorem $\ref{uniProMRR}$. First note that since $V(Q(A))$ is real reduced multiring by Proposition $\ref{GvnHRRM}$, the universal property of $V(A) \to Q(V(A))$ (Theorem $\ref{uniProMRR}$) guarantee the existence of $f \colon Q(V(A)) \to V(Q(A))$ such that 

    \[
  \begin{tikzcd}
    & V(A) \arrow{dr}{v(\pi_A)} \arrow{dl}{\pi_{V(A)}} & &&& Q(A) \arrow{dr}{Q(v_{A})} \arrow{dl}{v_{Q(A)}} & \\ 
  Q(V(A)) \arrow{rr}{f} && V(Q(A)) && V(Q(A)) \arrow{rr}{g} && Q(V(A))
  \end{tikzcd}
  \]

is a commutative diagram. On the other hand, considering the morphism $Q(v_A) \colon Q(A) \to Q(V(A))$, by Proposition $\ref{GvnHRRM}$ we have that $Q(V(A))$ is a GvNH and thus the universal property of $Q(A) \to V(Q(A))$ (Theorem $\ref{uniPGvNH}$) ensures the existence of $g \colon V(Q(A)) \to Q(V(A))$ such that $g \circ v_{Q(A)} = Q(v_A)$.

Now we have to prove that $g \circ f = Id_{Q(V(A))}$ and $f \circ g = Id_{V(Q(A))}$. Note that by Theorem $\ref{uniPGvNH}, i)$, we have $v(\pi_A) \circ v_A = v_{Q(A)} \circ \pi_A$ and by Theorem $\ref{uniProMRR}, i)$ $Q(v_A) \circ \pi_A = \pi_{V(A)} \circ v_A$.

\[
\begin{tikzcd}
A \arrow{r}{\pi_A} \arrow{d}{v_A} & Q(A) \arrow{d}{v_{Q(A)}} && A \arrow{r}{v_A} \arrow{d}{\pi_A} & V(A) \arrow{d}{\pi_{V(A)}} \\
V(A) \arrow{r}{v(\pi_A)} & V(Q(A)) && Q(A) \arrow{r}{Q(v_A)} & Q(V(A))
\end{tikzcd}
\]

Thus we have

\begin{align*}
(g \circ f) \circ \pi_{V(A)} \circ v_A & = g \circ v(\pi_A) \circ v_A \\
                                    & = g \circ v_{Q(A)} \circ \pi_A \\
                                    & = Q(v_A) \circ \pi_A = \pi_{V(A)} \circ v_A.
\end{align*}

Then by the uniqueness of the universal property of $v_A$ and $\pi_{V(A)}$ we have $g \circ f = Id_{Q(V(A))}$. Similar argument shows that $f \circ g = Id_{V(Q(A))}$ and so $V(Q(A)) \cong Q(V(A))$.
\end{proof}

Let $A$ be a ring. Then $V(A)$ is also a ring. In fact, given $f,g \in V(A)$ and $x,y \in f + g$, we have for all $p \in \mbox{spec}(A)$ that $x(p), y(p) \in f(p) + g(p)$ in $K_A(p)$; but since $A$ is a ring, $K_A(p)$ is also a ring and thus $x(p) = y(p)$. Then $x = y$ in $V(A)$. An interesting corollary of the preceding theorem is that if a RRM is representable by a ring using a Marshall quociente, then it is canonically representable by a von Neumann regular ring.

\begin{corollary} \label{represent}
Let $A$ be a semi-real ring. Assume that exists a ring $B$ and $S \subseteq B$ multiplicative subset such that $Q(A) \cong B /_m S$. Then $Q(A) \cong V(A) /_m (1 + \sum V(A)^2)$ is represented by the von Neumann regular ring $V(A)$.
\end{corollary}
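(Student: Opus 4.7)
The plan is to chain together three isomorphisms supplied by Theorems \ref{QVVQ}, \ref{reprevN}, and the claim embedded in the proof of Theorem \ref{uniPGvNH}, part ii). First I would argue that $V(A)$ inherits a genuine ring structure from $A$: since every fiber $K_A(p)$ is a field when $A$ is a ring, any pair of sections lying in a common sum $f+g \subseteq V(A)$ must agree pointwise, so addition in $V(A)$ is single-valued---this is exactly the observation made in the paragraph immediately preceding the corollary. Combined with Theorem \ref{uniPGvNH}, which makes $V(A)$ a geometric von Neumann hyperring, this forces $V(A)$ to be a von Neumann regular ring via Proposition \ref{cVn}. Semi-reality of $V(A)$ then follows from Theorem \ref{algV(A)}, which gives $\mbox{sper}(V(A)) \cong (\mbox{sper}(A))_{const}$, together with $\mbox{sper}(A) \neq \emptyset$ supplied by Proposition \ref{basicReal}.

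Next I would note that $Q(A)$ is by definition a real reduced multiring, hence a geometric von Neumann hyperring by Proposition \ref{GvNHR}, so the canonical map $v_{Q(A)} \colon Q(A) \to V(Q(A))$ is an isomorphism (this is exactly the claim stated inside the proof of Theorem \ref{uniPGvNH}, part ii). Theorem \ref{QVVQ}, applicable because $A$ is semi-real, then yields the key transposition $V(Q(A)) \cong Q(V(A))$. Finally, since $V(A)$ is a von Neumann hyperring, Theorem \ref{reprevN} with the proper preorder $T = \sum V(A)^2$ gives $Q(V(A)) \cong V(A) /_m (1 + \sum V(A)^2)$. Composing these three isomorphisms produces
\begin{align*}
Q(A) \;\cong\; V(Q(A)) \;\cong\; Q(V(A)) \;\cong\; V(A) /_m \bigl(1 + \textstyle\sum V(A)^2\bigr),
\end{align*}
in which $V(A)$ is a semi-real von Neumann regular ring, which is the desired conclusion.

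I do not expect a serious obstacle: the corollary is essentially a formal consequence of stitching together results already proved in the paper, the only delicate point being the verification that the ring structure on $A$ really propagates to $V(A)$, which is the short fiberwise argument above. In fact the hypothesis that $Q(A) \cong B /_m S$ for some auxiliary ring $B$ plays no role in the argument as sketched; the hull $V(A)$ already exhibits $Q(A)$ as a Marshall quotient of a von Neumann regular ring whenever $A$ is a semi-real ring, so this hypothesis appears to be redundant relative to the conclusion actually stated.
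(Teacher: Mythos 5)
There is a genuine gap at the step where you assert that ``$Q(A)$ is by definition a real reduced multiring, hence a geometric von Neumann hyperring by Proposition \ref{GvNHR}.'' Proposition \ref{GvNHR} does not say that every RRM is a GvNH. It establishes an equivalence between (i) being a RRM together with an extra condition (for each $a$ there exists $x$ with $ax=0$ and $x\in 1-a^2$), (ii) being a real reduced \emph{hyperring}, and (iii) being a GvNH satisfying $1+a^2=\{1\}$. A general RRM need not be a hyperring, and the proof of (ii)$\Rightarrow$(iii) in the paper uses the hyperring hypothesis through Proposition \ref{cVn}, which only applies to hyperrings. So you cannot conclude that $v_{Q(A)}\colon Q(A)\to V(Q(A))$ is an isomorphism from the mere fact that $Q(A)$ is a RRM.

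This is precisely where the hypothesis $Q(A)\cong B/_m S$ enters, and it is the reason the paper states it: since $B$ is a ring (hence a hyperring) and the Marshall quotient preserves hyperrings (Proposition \ref{marshallQ}), $B/_m S$, and therefore $Q(A)$, is a real reduced hyperring; by Proposition \ref{GvNHR} it is then a GvNH, and only now does the claim inside Theorem \ref{uniPGvNH}(ii) give $Q(A)\cong V(Q(A))$. The remainder of your chain $V(Q(A))\cong Q(V(A))\cong V(A)/_m(1+\sum V(A)^2)$ via Theorems \ref{QVVQ} and \ref{reprevN}, and your verification that $V(A)$ is a semi-real von Neumann regular ring, are all correct and agree with the paper. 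But your closing remark that the auxiliary hypothesis ``plays no role'' and ``appears to be redundant'' is mistaken; it is exactly what licenses the first isomorphism in the chain.
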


\begin{proof}
Since the Marshall quotient preserves hyperring (Proposition $\ref{marshallQ}$), $Q(A) \cong B/_m S$ is a real reduced hyperring and by Proposition $\ref{GvNHR}$ $Q(A)$ is a geometric von Neumann Hyperring. Thus by Theorem $\ref{uniPGvNH}$ and Theorem $\ref{QVVQ}$ we have $Q(A) \cong V(Q(A)) \cong Q(V(A))$. Using the representation given in Theorem $\ref{reprevN}$, the conclusion $Q(A) \cong V(A) /_m (1 + \sum V(A)^2)$ follows.
\end{proof}

\section{Final remarks and future works}

In Theorem $\ref{algV(A)}$, some algebraic properties of von Neumann hull were stated and proved. It is interesting to note that they are enough to characterize the hull. In other words, given a multiring $A$ and a morphism $f \colon A \to V$ where $V$ is a geometric von Neumann hyperring satisfying $i)$ the induced map $\mbox{spec}(V) \to (\mbox{spec}(A))_{const}$ is homeomorphism and $iii)$ for each $p \in \mbox{spec}(V)$, the induced map $K_A(q) \to K_V(p)$, where $q = f^{-1}(p)$, is an isomorphism, then $V \cong V(A)$. Furthermore, the functor $V \colon \textbf{Multi} \to \textbf{GvNH}$ preserves finite product and since it is a left-adjoint it also preserves inverse limits.

In the present work, we prove that von Neumann hull of a RRM is again a RRM (Proposition $\ref{GvnHRRM}$) (von Neumann RRM were studied in \cite{Mir1} through the equivalent notion of von Neumann RS). This will be used to give applications for quadratic forms. More precisely, the concept of Witt ring will be generalized from special groups to real semigroups in \cite{RM1} and we will show that $V(R)$, the von Neumann regular hull of a real semigroup $R$, can also be constructed from $W(R)$, the Witt ring of $R$. This will imply that there is canonical isomorphism between the Witt rings $W(R)$
and $W(V(R))$. 

The above relations allows the analysis of the Witt ring with tools available for RS von Neumann as the description of the isometry of forms through a pp-formula and the characterization
of the transversal representation in terms of isometry. This will be used to provide calculation of the graded  Witt ring, an axiomatization of the Witt rings in a convenient language $L_{\omega_1,\omega}$ and classification of categorical quotients of Witt rings. These results will be presented in \cite{RM1} and \cite{RM2}.



\begin{thebibliography}{10}

\bibitem[{\bf AM}]  {AM}
P.Arndt, H.L. Mariano,
{\em The von Neumann-regular Hull of (preordered) rings and quadratic forms},
\newblock South American Journal of Logic {\bf 2} (2016), 201-244.

\bibitem[{\bf CK}] {CK} C.C. Chang, H.J. Kiesler, {\bf Model Theory}, Studies in Logic and the Foundations of Mathematics, vol. 73, 1990.

\bibitem[{\bf DM1}] {DM1} M. Dickmann, F. Miraglia, {\bf
    Special Groups: Boolean-Theoretic Methods in the Theory of
Quadratic Forms},  Memoirs of the AMS {\bf 689}, American
Mathematical Society, Providence, USA, 2000.


\bibitem[{\bf DM2}] {DM2} M. Dickmann, F. Miraglia,
{\em On Quadratic Forms whose total signature is zero mod $2^n$.
Solution to a problem of M. Marshall}, Inventiones Mathematicae
{\bf 133} (1998), 243-278.

\bibitem[{\bf DM3}] {DM3} M. Dickmann, F. Miraglia, {\em Representation of reduced special groups in algebras of continuous functions}, Quadratic forms - algebra, arithmetic and geometry {\bf 493}, Contemp. Math., Amer. Math. Soc., Providence, RI, 2009.

\bibitem[{\bf DM4}] {DM4} M. Dickmann, F. Miraglia, {\em Quadratic form theory over preordered von Neumann-regular rings}, J. Algebra {\bf 319(4)} (2008), 1696-1732.

\bibitem[{\bf DP1}] {DP1} M. Dickmann, A. Petrovich, {\em Real Semigroups and Abstract Real Spectra}, Contemporary Mathematics 
AMS {\bf 344} (2004), 99-119.


\bibitem[{\bf DP2}] {DP2} M. Dickmann, A.Petrovich, {\em Real Semigroups, Real Spectra and Quadratic Forms over Rings}, in preparation; available at https://www.ime.usp.br/\char`\~miraglia/textos/RS-fev-16.pdf.


\bibitem[{\bf Mar1}] {Mar1} M. Marshall, {\bf Abstract Witt Rings}, Queen's Papers in Pure and Applied Mathematics {\bf 57}, 
Queen's University, Canada, 1980.

\bibitem[{\bf Jun}] {Jun} J. Jun, {\em Algebraic Geometry over Hyperrings}, arXiv:1512.04837v1, 2015, 37pp.

\bibitem[{\bf Mar2}] {Mar2} M. A. Marshall, {\bf Spaces of Orderings
and Abstract Real Spectra}, Lecture Notes in Mathematics {\bf
1636}, Springer-Verlag, Berlin, Germany, 1996.

\bibitem[{\bf Mar3}] {Mar3} M. Marshall, {\em Real reduced multirings and multifields}, Journal of Pure and Applied 
Algebra {\bf 205} (2006), 452-468.

\bibitem[{\bf Mir1}] {Mir1} F. Miraglia, {\bf An Introduction to Partially Ordered Structures and Sheaves}, Polimetrica Scientific Publishers, Contemporany Logic Series {\bf 1}, Milan, 2007.

\bibitem[{\bf Mir2}] {Mir2} F. Miraglia, {\em Boolean and von Neumann Regular Real Semigroups}, preprint, 2019, 23pp.

\bibitem[{\bf RM1}] {RM1} H.R.O. Ribeiro, H.L. Mariano, {\em Witt rings for Real Semigroups}, in preparation.

\bibitem[{\bf RM2}] {RM2} H.R.O. Ribeiro, H.L. Mariano, {\em Hulls for  Real Semigroups and applications}, in preparation.

\bibitem[{\bf RRM}] {RRM}  H.R.O. Ribeiro, K.M.A Roberto, H.L. Mariano, {\em Functorial relationship between multirings and the various abstract theories of quadratic forms}, submitted, preliminary version available at https://arxiv.org/pdf/1610.00816.pdf, 2020, 38 pp.
\end{thebibliography}


\end{document}